\newcommand{\E}{\mathbb{E}}
\numberwithin{equation}{section}
\theoremstyle{plain}
\newtheorem{thm}{Theorem}[section]
\newtheorem{lemma}[thm]{Lemma}
\newtheorem{prop}[thm]{Proposition}
\newtheorem{cor}[thm]{Corollary}
\theoremstyle{definition}
\newtheorem{definition}[thm]{Definition}
\newtheorem{remark}[thm]{Remark}
\theoremstyle{remark}
\newtheorem*{stat*}{Statement}
\newtheorem*{prop*}{Proposition}
\numberwithin{equation}{section}
\newcommand{\N}{\mathbb{N}}
\newcommand{\R}{\mathbb{R}}
\newcommand{\cov}{\operatorname{cov}}
\newcommand{\var}{\text{Var}}
\newcommand{\of}[1]{\left(#1\right)}
\newcommand{\overbar}[1]{\mkern 1.5mu\overline{\mkern-1.5mu#1\mkern-1.5mu}\mkern 1.5mu}
\def\chaptermark#1{}%whatever
\def\chapter{%
  \if@openright\cleardoublepage\else\clearpage\fi
  \thispagestyle{plain}\global\@topnum\z@
  \@afterindenttrue \secdef\@chapter\@schapter}
\def\@chapter[#1]#2{\refstepcounter{chapter}%
  \ifnum\c@secnumdepth<\z@ \let\@secnumber\@empty
  \else \let\@secnumber\thechapter \fi
  \typeout{\chaptername\space\@secnumber}%
  \def\@toclevel{0}%
  \ifx\chaptername\appendixname \@tocwriteb\tocappendix{chapter}{#2}%
  \else \@tocwriteb\tocchapter{chapter}{#2}\fi
  \chaptermark{#1}%
  \addtocontents{lof}{\protect\addvspace{10\p@}}%
  \addtocontents{lot}{\protect\addvspace{10\p@}}%
  \@makechapterhead{#2}\@afterheading}
\def\@schapter#1{\typeout{#1}%
  \let\@secnumber\@empty
  \def\@toclevel{0}%
  \ifx\chaptername\appendixname \@tocwriteb\tocappendix{chapter}{#1}%
  \else \@tocwriteb\tocchapter{chapter}{#1}\fi
  \chaptermark{#1}%
  \addtocontents{lof}{\protect\addvspace{10\p@}}%
  \addtocontents{lot}{\protect\addvspace{10\p@}}%
  \@makeschapterhead{#1}\@afterheading}
\newcommand\chaptername{Chapter}
\def\@makechapterhead#1{\global\topskip 7.5pc\relax
  \begingroup
  \fontsize{\@xivpt}{18}\bfseries\centering
    \ifnum\c@secnumdepth>\m@ne
      \leavevmode \hskip-\leftskip
      \rlap{\vbox to\z@{\vss
          \centerline{\normalsize\mdseries
              \uppercase\@xp{\chaptername}\enspace\thechapter}
          \vskip 3pc}}\hskip\leftskip\fi
     #1\par \endgroup
  \skip@34\p@ \advance\skip@-\normalbaselineskip
  \vskip\skip@ }
\def\@makeschapterhead#1{\global\topskip 7.5pc\relax
  \begingroup
  \fontsize{\@xivpt}{18}\bfseries\centering
  #1\par \endgroup
  \skip@34\p@ \advance\skip@-\normalbaselineskip
  \vskip\skip@ }
\def\appendix{\par
  \c@chapter\z@ \c@section\z@
  \let\chaptername\appendixname
  \def\thechapter{\@Alph\c@chapter}}
\newcounter{chapter}
\newif\if@openright
\renewcommand{\tocsection}[3]{%
  \indentlabel{\@ifnotempty{#2}{\bfseries\ignorespaces#1 #2\quad}}\bfseries#3} 
\renewcommand{\tocsubsection}[3]{%
  \indentlabel{\@ifnotempty{#2}{\ignorespaces#1 #2\quad}}#3}
\newcommand\@dotsep{4.5}
\def\@tocline#1#2#3#4#5#6#7{\relax
  \ifnum #1>\c@tocdepth % then omit
  \else
    \par \addpenalty\@secpenalty\addvspace{#2}%
    \begingroup \hyphenpenalty\@M
    \@ifempty{#4}{%
      \@tempdima\csname r@tocindent\number#1\endcsname\relax
    }{%
      \@tempdima#4\relax
    }%
    \parindent\z@ \leftskip#3\relax \advance\leftskip\@tempdima\relax
    \rightskip\@pnumwidth plus1em \parfillskip-\@pnumwidth
    #5\leavevmode\hskip-\@tempdima{#6}\nobreak
    \leaders\hbox{$\m@th\mkern \@dotsep mu\hbox{.}\mkern \@dotsep mu$}\hfill
    \nobreak
    \hbox to\@pnumwidth{\@tocpagenum{\ifnum#1=1\bfseries\fi#7}}\par% <-- \bfseries for \section page
    \nobreak
    \endgroup
  \fi}
\renewcommand\csname r@tocindent0\endcsname{0pt}
\def\l@subsection{\@tocline{2}{0pt}{2.5pc}{5pc}{}}
\begin{document}

\title{Pair Dependent Linear Statistics for C$\beta$E}

\author[A. Aguirre A. Soshnikov J. Sumpter]{Ander Aguirre, Alexander Soshnikov and Joshua Sumpter}

\address{University of California at Davis \\ Department of Mathematics \\ 1 Shields Avenue \\Davis CA 95616 \\ United States of America} 

\email{aaguirre@ucdavis.edu}

\address{University of California at Davis \\ Department of Mathematics \\ 1 Shields Avenue \\  Davis CA 95616 \\ United States of America} 

\email{soshniko@math.ucdavis.edu}

\address{University of California at Davis \\ Department of Mathematics \\ 1 Shields Avenue \\  Davis CA 95616 \\ United States of America} 

\email{jsumpter@math.ucdavis.edu}

 \subjclass[2010]{Primary: 60F05. }

 \keywords{Random Matrices, Central Limit Theorem} 

\date{December 17, 2019}

\begin{abstract}

We study the limiting distribution of 
a pair counting statistics of the form \\
$ \sum_{1\leq i\neq j\leq N} f(L_N\*(\theta_i-\theta_j))$
for the circular $\beta$-ensemble (C$\beta$E) of 
random matrices for sufficiently smooth test function $f$ and $L_N=O(N).$
For $\beta=2$ and $L_N=N$ our results are inspired by a classical result of Montgomery on pair correlation of zeros of Riemann zeta function.
\end{abstract}
\maketitle

\tableofcontents

\section{Introduction}

        Let $\{\theta_i\}_{i=1}^{N}$ be distributed on the unit circle according the circular $\beta$-ensemble (C$\beta$E), 
i.e. have joint probability density
            \begin{align} \label{betaensemble}
                p_N^\beta(\overline{\theta})=\frac{1}{Z_{N,\beta}}\prod_{1\leq j< k\leq N}\left|e^{i\theta_j}-e^{i\theta_k}\right|^\beta, \ \ \ 
0\leq \theta_1, \ldots, \theta_N <2\*\pi, 
            \end{align}
        where $\beta>0$ and  $Z_{N,\beta}$ is an appropriate normalization constant that can be explicitly written in terms of the Gamma function as 
follows:
            \begin{align}
\label{selberg}
                \frac{Z_{N,\beta}}{(2\pi)^N}= 
\frac{1}{(2\pi)^N}\int_{\mathbb{T}^N} \prod_{1\leq j< k\leq N}\left|e^{i\theta_j}-e^{i\theta_k}\right|^\beta d\overline{\theta}= 
\frac{\Gamma\left(1+\frac{\beta N}{2}\right)}{\Gamma\left(1+ \frac{\beta}{2}\right)^N}.
       \end{align}

The ensemble was introduced in Random Matrix Theory by Dyson in \cite{Dyson1}-\cite{Dyson3}. Three special cases $\beta=1,2$ and $4$ correspond to the 
so-called Circular Orthogonal Ensemble (COE), Circular Unitary Ensemble (CUE), and Circular Symplectic Ensemble (CSE). For the CUE ($\beta=2,$)
(\ref{betaensemble}) is the joint distribution of the eigenvalues of an $n\times n$ random unitary matrix U distributed according to the Haar measure.
The joint distribution of $U^t\*U$ gives the COE. If $U^D$ denotes the quaternion dual, then $U^D\*U$ gives the CSE for even $n$. We refer the reader to 
\cite{mehta} for details. For arbitrary $\beta>0,$ a (sparse) random matrix model with eigenvalues distribution following (\ref{betaensemble}) 
was introduced in \cite{KN}. 

Since the probability density (\ref{betaensemble})  is invariant under rotations, one is interested in the fluctuation of the empirical spectral density
around the Lebesgue measure on the unit circle. For results on the limiting distribution of linear statistics $\sum_{j=1}^n f(\theta_j) $ for 
``sufficiently nice'' test functions $f$ we refer the reader to \cite{johansson1}, \cite{DS}, \cite{johansson3}, \cite{Rains}, \cite{sasha}, \cite{HKOC}, 
\cite{DE}, \cite{WF1}, \cite{WF2}, \cite{webb}, \cite{lambert}, \cite{FTW}.  A significant part of literature is devoted to statistical properties
of spectral linear statistics in the mesoscopic regime $\sum_{j=1}^n f(L_N\*\theta_j), \ 1\ll L_N \ll N$ (see e.g. \cite{sasha}, \cite{lambert}, \cite{BL},
\cite{HK}, \cite{LSX}, \cite{LS} and references therein for this and related problems).

Denote the Fourier coefficients of an $L^2(\mathbb{T})$ function $f$ as
\begin{align}
\label{FourierS}
\hat{f}(m)=\frac{1}{2\*\pi}\*\int_0^{2\*\pi}f(x)\* e^{-i\*m\*x}\* dx.
\end{align}

It was proven by Johansson in \cite{johansson1} that for arbitrary $\beta>0$ and sufficiently smooth real-valued $f$
\[
\frac{\sum_{j=1}^N f(\theta_j) - N\*\hat{f}(0)}{\sqrt{\frac{2}{\beta}\*\sum_{-\infty}^{\infty} |\hat{f}(m)|^2\*|m|}}
\]
converges in distribution to a standard Gaussian random variable.

If $f$ is not smooth enough and the variance of the linear statistic goes to infinity with $N,$ Diaconis and Evans \cite{DE} 
proved the CLT in the case 
$\beta=2$ provided the sequence $\{\sum_{-n}^{n} |\hat{f}(m)|^2\*|m| \}_{n \in \N}$ is slowly varying.
For the results about the Gaussian fluctuation of the number of eigenvalues in arcs we 
refer the reader to \cite{FTW} and references therein.  For the results on the characteristic polynomial of a random unitary matrix, we refer the reader 
to \cite{HKOC}, \cite{BF}.

This paper is devoted to studying the limiting distribution of pair counting functions

            \begin{align}
\label{pairs}
                S_N(f)=\sum_{1\leq i\neq j\leq N} f(L_N\*(\theta_i-\theta_j)_c),
            \end{align}
where $(\theta_i-\theta_j)_c$ is the phase difference on the unit circle, i.e.
\begin{align}
\label{circlediff}
(\theta-\phi)_c= \begin{cases}  \theta-\phi&\text{if }  -\pi\leq \theta-\phi<\pi,\\
 \theta-\phi -2\*\pi  &\text{if  }  \pi\leq \theta-\phi<2\*\pi,\\
\theta-\phi +2\*\pi&\text{if}  -2\*\pi<\theta-\phi<-\pi,
\end{cases}
\end{align}

$f$ is a a smooth function, and $L_N/N$ is bounded from above. The case $\beta=2, \ L_N=N$ is of the main interest since it is motivated by
a classical result of Montgomery on pair correlation of zeros of the Riemann zeta function \cite{montgomery1}-\cite{montgomery2}. Assuming the Riemann 
Hypothesis (RH), Montgomery studied the distribution of the ``non-trivial'' zeros $\{ 1/2 \pm \gamma_n\}, \ \gamma_n$ real positive.  Rescaling zeros
\[ 
\tilde{\gamma}_n= \frac{\gamma_n}{2\*\pi} \*\log(\gamma_n),
\]
Montgomery essentially studied the statistic
\[
F(\alpha)=T^{-1}\* \sum_{0<\tilde{\gamma}_j, \tilde{\gamma_k}\leq T} \exp(i\*\alpha\*(\tilde{\gamma}_j-\tilde{\gamma}_k)) 
\*\frac{4}{4+(\tilde{\gamma}_j-\tilde{\gamma}_k)^2/\log(T)^2},
\]
for real $\alpha$ and large real $T.$ Assuming RH, Montgomery rigorously proved that for \\$ 0\leq \alpha\leq 1 $ and large $T$ the statistic behaves as
\[ (1+o(1))\* T^{-2\*\alpha}\*\log(T)^{2\alpha-1} +\alpha +o(1),\]
He also proved heuristic arguments that
\[
F(\alpha)=1+o(1),
\]
for $\alpha\geq 1,$ uniformly in bounded intervals,
which allowed him to conjecture that rescaled non-trivial zeros of the Riemann zeta function behave locally as the rescaled eigenvalues of the CUE.
It should be noted that $\alpha \mapsto \min(|\alpha|, 1)$ is the Fourier transform of $\delta(x)-\left(\frac{\sin(\pi\*x)}{\pi\*x}\right)^2,$ 
which suggests 
that rescaled two-point correlations of zeros of Riemann zeta functions and eigenvalues of a large random unitary matrix coincide in the limit (we 
refer the reader to \cite{RS} for further developments.)
Hence comes our interest in studying the limiting distribution of (\ref{pairs}), especially in the microscopic regime $L_N=N.$
We prove Gaussian fluctuation under certain technical conditions in the mesoscopic and microscopic cases (see Theorems 2.4 and 2.5 below). The 
fluctuation is not Gaussian in the macroscopic case provided a test function $f$ is sufficiently smooth (see Theorem 2.1 below).

In the microscopic case, we note that 
even though the number of terms in 
\begin{align}
\label{localcirc}
S_N(f)=\sum_{1\leq i\neq j\leq N} f(N\*(\theta_i-\theta_j)_c) 
\end{align}
is proportional to $N^2,$ the number of non-zero terms in the sum is of order $N$ provided a test function $f$ decays sufficiently fast at infinity.
We note that the limiting fluctuation of the sum (\ref{localcirc}) does not change if one replaces the circular difference (\ref{circlediff}) in the argument of $f(N\cdot)$
by the regular one and studies instead
\begin{align}
\label{localnoncirc}
\sum_{1\leq i\neq j\leq N} f(N\*(\theta_i-\theta_j)), 
\end{align}
since the number of pairs of the eigenvalues in a $O(N^{-1})$ neighborhood of $\theta=0$ is bounded in probability.

The pairs $(i,j)$ that give non-zero contribution to (\ref{localcirc}) correspond to neighbors (nearest neighbors, next-to-nearest-neighbors, etc.)
Ordering the particles $\{\theta_j\}_{j=1}^{N}$ we introduce order statistics
\[0\leq \theta_{(1)}<\theta_{(2)}<\ldots<\theta_{(N)}<2\*\pi,\]
and rescaled nearest-neighbor spacings 
\[
\tau_j=N\*(\theta_{(j+1)}-\theta_{(j)}), \ \ j=1, \ldots,N-1.
\]
One can compare (\ref{localcase}) with the sum
\begin{align}
\label{spacings-sum}
\mathcal{A}_N(f):=\sum_{j=1}^{N-1} f(\tau_j).
\end{align}
The empirical distribution function of nearest-neighbor spacings was studied in \cite{Sasha}, where it was shown that
\[\xi_N(s)=\left(\#\{j: \tau_j\leq s\}-\E \#\{j: \tau_j\leq s\}\right)\*N^{-1/2}
\]
converges in finite-dimensional distributions and also, after minor modifications, in functional sense, to a Gaussian random process
$\xi(s)$ as $N\to \infty$ (see Theorems 1.1 and 1.2 in \cite{Sasha}).
As a corollary, 
\[\left(\sum_{j=1}^{N-1} f(\tau_j)-\E\sum_{j=1}^{N-1} f(\tau_j)\right)\*N^{-1/2} = \int f(s)\*d \xi_N(s)=-\int f'(s) \* \xi_N(s) \*ds\]
converges in distribution a Gaussian random variable 
$-\int f'(s) \* \xi(s)\*ds. $

The paper is organized as follows. We formulate our results in the next section. The unscaled case $(L_N=1)$ is studied in Sections 3 and 4. We discuss 
joint cumulants for linear statistics in the $\beta=2$ case in Section 5. The mesoscopic case $L_N\to \infty, L_N/N\to 0$ is studied in Section 6.
The microscopic case $L_N=N, \ \ \beta=2$ is studied in Section 7. The auxiliary results are collected in Appendices 1-3.

The notation $a_N=O(b_N)$ means 
that the ratio $ a_N/b_N$ is bounded from above in absolute value. The notation $a_N=o(b_N)$ means that $a_n/b_N\to 0$ as $N\to \infty.$
Occasionally, for non-negative quantities, in this case we will also use the notation $a_N \ll b_N.$ Finally, we note that we use similar notations 
$\hat{f}(m)$ in (\ref{FourierS}) and $\hat{f}(t)$ in (\ref{FourierT}) for the Fourier series coefficients in of a function defined on a unit circle and 
for the Fourier transform of a function defined on the real line, correspondingly. The first notation is used when we study the global regime ($L_N=1$) and
the second notation is used in the mesoscopic and local regimes.

Research has been partially supported  by the Simons Foundation Collaboration Grant for Mathematicians \#312391.
            
    \section{Main Theorems}   
We start with the unscaled case $L_N=1.$    

        \begin{thm} Consider the C$\beta$E (\ref{betaensemble}) and let

            \begin{align}
\label{pairs1}
                S_N(f)=\sum_{1\leq i\neq j\leq N} f(\theta_i-\theta_j),
            \end{align}
            
where $f$ is a real even function on the unit circle such that $f'\in L^2(\mathbb{T})$ for $\beta=2$,
$\sum_{k\in \mathbb{Z}}|\hat{f}(k)||k|<\infty$ for $\beta<2, \ \sum_{k\in \mathbb{Z}}|\hat{f}(k)||k|\*\log(|k|+1)<\infty$ for $\beta=4,$ 
and \\
$\sum_{k\in \mathbb{Z}}|\hat{f}(k)||k|^2<\infty$ for $\beta \in (2,4)\cup (4, \infty).$

Then we have the following convergence in distribution as $N\rightarrow \infty$:

            $$S_N(f)-\E S_N(f)\xrightarrow{\hspace{2mm}\mathcal{D}\hspace{2mm}  } \frac{4}{\beta}\sum_{m=1}^{\infty}\hat{f}(m)m(\varphi_m-1),$$
        
        where $\varphi_m$ are i.i.d. exponential random variables with  $\E(\varphi_m)=1$.
        \end{thm}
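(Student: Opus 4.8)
The plan is to reduce $S_N(f)$ to a quadratic statistic in the power sums (traces) $p_m := \sum_{j=1}^N e^{im\theta_j}$ and then invoke the asymptotic independence and Gaussianity of these traces. Since $f$ is real and even, its Fourier series reads $f(\theta)=\hat{f}(0)+2\sum_{m\ge 1}\hat{f}(m)\cos(m\theta)$ with real coefficients $\hat{f}(m)=\hat{f}(-m)$. Using the elementary identity $\sum_{1\le i\ne j\le N}e^{im(\theta_i-\theta_j)}=|p_m|^2-N$ (valid also at $m=0$, where it gives $N(N-1)$), I would substitute the Fourier expansion into (\ref{pairs1}) and interchange the sums, which are absolutely convergent under the stated hypotheses since each forces $\sum_m|\hat{f}(m)|<\infty$ and $|p_m|^2\le N^2$. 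This yields
\[
S_N(f)=\hat{f}(0)\,N(N-1)+2\sum_{m=1}^{\infty}\hat{f}(m)\bigl(|p_m|^2-N\bigr).
\]
Subtracting the mean kills the deterministic first term and replaces $N$ by $\E|p_m|^2$, leaving
\[
S_N(f)-\E S_N(f)=2\sum_{m=1}^{\infty}\hat{f}(m)\bigl(|p_m|^2-\E|p_m|^2\bigr).
\]

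The second ingredient is the limiting law of the traces for the C$\beta$E. For each fixed $M$, the vector $(p_1,\dots,p_M)$ converges in distribution, as $N\to\infty$, to a vector of independent mean-zero complex Gaussians with $\E|p_m|^2\to 2m/\beta$; this is the C$\beta$E analogue of the Diaconis--Shahshahani theorem and follows from the moment/Selberg-integral computations for power sums. Writing such a Gaussian as $\sqrt{m/\beta}\,(X_m+iY_m)$ with $X_m,Y_m$ i.i.d.\ standard normals, one gets $|p_m|^2\xrightarrow{\mathcal{D}}(m/\beta)(X_m^2+Y_m^2)=(2m/\beta)\varphi_m$, where $\varphi_m:=\tfrac12(X_m^2+Y_m^2)$ is an $\mathrm{Exp}(1)$ variable, while $\E|p_m|^2\to 2m/\beta$. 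Hence $|p_m|^2-\E|p_m|^2\to(2m/\beta)(\varphi_m-1)$, and the truncated statistic
\[
S_N^{(M)}:=2\sum_{m=1}^{M}\hat{f}(m)\bigl(|p_m|^2-\E|p_m|^2\bigr)\xrightarrow{\mathcal{D}}\frac{4}{\beta}\sum_{m=1}^{M}\hat{f}(m)\,m\,(\varphi_m-1)
\]
as $N\to\infty$, with independent $\varphi_m$.

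It remains to pass from the truncated sum to the full series, and this is where I expect the real work. I would use a standard converging-together argument: the finite-$M$ limits above converge, as $M\to\infty$, almost surely and in $L^2$ to $\tfrac{4}{\beta}\sum_{m\ge 1}\hat{f}(m)m(\varphi_m-1)$, which is a well-defined random variable because $\sum_m|\hat{f}(m)|^2m^2<\infty$ (guaranteed by the hypotheses) and $\mathrm{Var}(\varphi_m-1)=1$. To complete the interchange of limits it then suffices to prove the uniform tail bound
\[
\lim_{M\to\infty}\ \sup_{N}\ \mathrm{Var}\Bigl(2\sum_{m>M}\hat{f}(m)\,|p_m|^2\Bigr)=0.
\]
The main obstacle is precisely this estimate: it forces control of $\mathrm{Var}(|p_m|^2)$ and of the covariances $\mathrm{Cov}(|p_m|^2,|p_n|^2)$ for all $m,n$, including the delicate range $m\gtrsim N$ where the Gaussian approximation fails and $\E|p_m|^2$ saturates rather than growing linearly. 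The $\beta$-dependent smoothness assumptions ($f'\in L^2$ for $\beta=2$, and the weighted absolute-summability conditions on $\hat f$ otherwise, with the logarithmic loss at $\beta=4$) should be exactly calibrated to match the moment bounds available for the traces in each regime and to render this tail variance summable uniformly in $N$. Once the tail bound holds, Slutsky's theorem together with the finite-dimensional convergence of $S_N^{(M)}$ yields the claimed convergence of $S_N(f)-\E S_N(f)$ to $\tfrac{4}{\beta}\sum_{m=1}^{\infty}\hat{f}(m)m(\varphi_m-1)$.
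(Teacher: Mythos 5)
Your reduction of $S_N(f)$ to $2\sum_{m\ge1}\hat f(m)\bigl(|p_m|^2-\E|p_m|^2\bigr)$ and the finite-$M$ limit via the trace CLT are exactly the paper's starting point (formula (3.1) together with Johansson's theorem), and your converging-together scheme is the paper's $\epsilon/3$ argument in Appendix 1. But the step you defer --- the uniform tail bound --- is not a calibration detail; it is the entire content of the proof, and as you formulate it (a bound on $\mathrm{Var}\bigl(2\sum_{m>M}\hat f(m)|p_m|^2\bigr)$ uniform in $N$) it is only the right tool for $\beta=2$. There the determinantal structure of the CUE yields exact formulas for $\cov(|p_s|^2,|p_t|^2)$ (the paper's Proposition 4.1 and Corollary 4.2), and showing that the cross terms coming from the range $s,t\gtrsim N$ vanish under the optimal hypothesis $f'\in L^2(\mathbb{T})$ requires the operator-norm estimates of Lemma 4.4; this occupies all of Section 4 and is the hardest part of the theorem. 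None of that analysis appears in your proposal.

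For $\beta\ne 2$ the plan as written would not go through with the stated hypotheses: a Chebyshev/variance bound on the tail needs fourth moments of the traces, uniformly in $m$ (including $m\gtrsim N$, where the Gaussian approximation fails) and in $N$, whereas the assumptions $\sum_k|\hat f(k)||k|<\infty$ (resp.\ with the logarithmic or quadratic weights) are first-moment conditions. The paper instead applies Markov's inequality to $\E\bigl|S_N(f-f_k)-\E S_N(f-f_k)\bigr|\le 2\sum_{m>k}|\hat f(m)|\cdot\E\bigl||p_m|^2-\E|p_m|^2\bigr|$ and inserts the Jiang--Matsumoto bounds on $\E|t_{N,m}|^2$ in the ranges $m\le N$, $N<m\le 2N$ and $m>2N$; that is precisely what produces the three different $\beta$-dependent summability conditions, including the logarithmic loss at $\beta=4$. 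So you have correctly located where the work lies, but you have neither carried it out nor, outside $\beta=2$, chosen an inequality that the available moment information can support.
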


\begin{remark}
The mathematical expectation of $S_N(f)$ satisfies
\begin{align}
\label{matozh}
\E S_N(f)= \hat{f}(0)\*N^2 -f(0)\*N +\frac{2}{\beta}\*\sum_{k=-\infty}^{\infty} \hat{f}(k)\*|k| +o(1)
\end{align}
for sufficiently smooth test function $f.$  In particular, for $\beta=2$ one has
\begin{align}
\label{matozh1}
\E S_N(f)= \hat{f}(0)\*N^2 -f(0)\*N +\sum_{k=-\infty}^{\infty} \hat{f}(k) \*\min(|k|,N). 
\end{align}
In general, (\ref{matozh}) holds for $\beta\leq 2$ under the optimal condition $\sum_{k=-\infty}^{\infty} |\hat{f}(k)| \*|k|<\infty.$
For $\beta>2$  we can show that $\sum_{k=-\infty}^{\infty} |\hat{f}(k)| \*|k|^2<\infty$ implies (\ref{matozh}).
\end{remark}

\begin{remark} It is reasonable to expect that CLT holds for $S_N(f)$ provided the series $\sum_{k\in \mathbb{Z}}|\hat{f}(k)|^2\*|k|^2$ 
diverges and the sequence of its
partial sums satisfies some regularity condition since the sum of independent random variables 
$\sum_{m=1}^{N}\hat{f}(m)m(\varphi_m-1)$ converges to a Gaussian distribution after normalization under mild assumptions on the coefficients $\hat{f}(m).$
This is outside the scope of this paper. The case of a slowly growing variance is considered in \cite{AS}.

Here we just note that 
for $f= (1/2)\ln|2\sin(\theta/2)|$ and arbitrary $\beta>0$
        \[
            \frac{S_N(f)- E(S_N(f))}{\sqrt{N}}\longrightarrow\mathcal{N}\left(0, \frac{2-\beta\Psi^{(2)}\left(1+\frac{\beta}{2}\right)}{4\beta}\right),
        \]
    where $\Psi^{(k)}(x)= \left(\frac{d}{dx}\right)^k \log(\Gamma(x))$.  This is a simple corollary of the Selberg integral formula (\ref{selberg}).
\end{remark}

Now we consider the mesoscopic regime $1 <<L_N<<N.$
Let $f \in C^{\infty}_c(\R)$ be an even, smooth, compactly supported function on the real line. 
When $N$ is sufficiently large, the support of $f(L_N\*\theta)$ is contained in the interval $[-\pi,\pi]$. Extend $f(L_N\cdot)$ $2\*\pi$-periodically to the whole real line. 
Consider the random variable defined above in (\ref{pairs}), namely
\[
S_N(f(L_N\cdot))=\sum_{1\leq i\neq j\leq N} f(L_N\*(\theta_i-\theta_j)_c).
\]
Denote by
\begin{align}
\label{FourierT}
\hat{f}(t)=\frac{1}{\sqrt{2\*\pi}}\*\int_{\R} f(x)\*e^{-i\*t\*x} \* dx
\end{align}
the Fourier transform of $f$.
The following result holds.

 \begin{thm}
Let $f \in C^{\infty}_c(\R)$ be an even, smooth, compactly supported function on the real line.  Assume that
$1 <<L_N<<N,$ for $\beta =2$ and that $L_N$ grows to infinity slower than any positive power of $N$ for $\beta \neq 2.$ Then 
$(S_N(f(L_N\cdot)) -\E S_N(f(L_N\cdot)))\*L_N^{-1/2}$ converges in distribution to centered real Gaussian random variable with the variance
\[
\frac{4}{\pi\*\beta^2}\* \int_\mathbb{R} |\hat{f}(t)|^2 \*t^2\* dt.
\]
\end{thm}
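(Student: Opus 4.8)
The plan is to diagonalize the statistic in the power sums $p_k := \sum_{j=1}^N e^{i k \theta_j}$ and then run the method of cumulants. Since $f \in C^\infty_c(\R)$, for $N$ large the support of $f(L_N\cdot)$ is contained in $(-\pi,\pi)$, so its $2\pi$-periodization $g_N$ is a genuine function on the circle, $g_N(\theta_i-\theta_j)=f(L_N(\theta_i-\theta_j)_c)$, and a change of variables gives (exactly, once $\operatorname{supp} f(L_N\cdot)\subset(-\pi,\pi)$)
\[ \hat{g}_N(k) = \frac{1}{2\pi}\int_{-\pi}^{\pi} f(L_N x)\, e^{-ikx}\,dx = \frac{1}{\sqrt{2\pi}\,L_N}\,\hat{f}\!\left(\frac{k}{L_N}\right). \]
Expanding $g_N$ in its Fourier series and using $\sum_{i\neq j} e^{ik(\theta_i-\theta_j)} = |p_k|^2 - N$, together with the evenness of $f$ (so $\hat{g}_N(-k)=\hat{g}_N(k)$ and $|p_{-k}|^2 = |p_k|^2$), yields the exact identity
\[ S_N(f(L_N\cdot)) - \E S_N(f(L_N\cdot)) = 2\sum_{k\geq 1}\hat{g}_N(k)\,\bigl(|p_k|^2 - \E|p_k|^2\bigr). \]
Because $\hat{f}$ is Schwartz, $\hat{g}_N(k)$ is super-polynomially small once $k\gg L_N$, so the sum is effectively supported on frequencies $k\lesssim L_N N^{o(1)}$; under both hypotheses these satisfy $k\ll N$, the regime in which the power sums are well controlled.

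It then suffices to establish convergence of cumulants for $X_N := (S_N - \E S_N)L_N^{-1/2}$, namely $C_2(X_N)\to \frac{4}{\pi\beta^2}\int_\R t^2|\hat f(t)|^2\,dt$ and $C_m(X_N)\to 0$ for all $m\geq 3$. By multilinearity of joint cumulants,
\[ C_m(X_N) = \left(\frac{2}{\sqrt{L_N}}\right)^{m}\sum_{k_1,\dots,k_m\geq 1}\ \prod_{r=1}^m \hat{g}_N(k_r)\ \cdot\ C_m\bigl(|p_{k_1}|^2,\dots,|p_{k_m}|^2\bigr), \]
so everything reduces to estimates for the joint cumulants of the $|p_k|^2$.

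For $\beta=2$ these are available in closed form through the exact moment identities of Diaconis--Shahshahani and Diaconis--Evans (recorded in Section 5): whenever $\sum_r k_r\le N$ the power sums $\Tr U^{k_r}$ behave as genuinely independent complex Gaussians with $\E|p_k|^2=k$. Since a joint cumulant vanishes as soon as its arguments split into two independent groups, only the fully diagonal terms $k_1=\dots=k_m=k$ survive, and for such $k$ the first $m$ moments of $|p_k|^2$ coincide with those of a mean-$k$ exponential law, so $C_m(|p_k|^2)=(m-1)!\,k^m$ whenever $mk\le N$. Hence, up to a super-polynomially small tail from the frequencies with $mk>N$,
\begin{align*}
C_m(X_N) &\approx (m-1)!\left(\frac{2}{\sqrt{L_N}}\right)^{m}\sum_{k\geq 1}\hat{g}_N(k)^m\,k^m \\
&= (m-1)!\,\frac{2^m}{(2\pi)^{m/2}}\,L_N^{\,1-m/2}\,\frac{1}{L_N}\sum_{k\geq1}\hat{f}\!\left(\frac{k}{L_N}\right)^m\left(\frac{k}{L_N}\right)^m.
\end{align*}
The last sum is a Riemann sum converging to $\int_0^\infty \hat f(t)^m t^m\,dt$, so the prefactor $L_N^{1-m/2}$ forces $C_m(X_N)\to0$ for $m\geq3$, while for $m=2$ the prefactor is $1$ and, using the evenness of $\hat f$, the limit is exactly $\frac{4}{\pi\beta^2}\int_\R t^2|\hat f|^2\,dt$. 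This settles $\beta=2$.

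The hard part is the general $\beta$ case, where no exact independence is available. Here I would invoke the asymptotic moment formulas for power sums of the C$\beta$E (of Jiang--Matsumoto type), which state that the $p_k$ converge to independent complex Gaussians with $\E|p_k|^2=\frac{2}{\beta}k$, but only up to error terms that worsen with the total degree. The leading Riemann-sum asymptotics are then identical to the $\beta=2$ computation up to the factor $(2/\beta)^m$, which reproduces the claimed variance $\frac{4}{\pi\beta^2}\int_\R t^2|\hat f|^2\,dt$ and again kills the cumulants of order $m\geq3$ through the $L_N^{1-m/2}$ scaling. The genuine obstacle is to show that the off-diagonal and non-Gaussian corrections to the joint cumulants are summable and stay below the normalization $L_N^{m/2}$; this is precisely where the restriction that $L_N$ grow more slowly than any positive power of $N$ is used, since it keeps all relevant frequencies $k\lesssim L_N$ inside the range $N^{o(1)}$ in which the Gaussian approximation holds with errors that remain $o(1)$ after summation over the $O(L_N^{1+o(1)})$ frequencies and after the $m$-fold multilinear expansion. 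Controlling these error terms uniformly, rather than extracting the leading asymptotics, is the main technical difficulty.
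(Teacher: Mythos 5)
Your $\beta=2$ argument is sound and rests on the same inputs as the paper's --- the Fourier expansion of $S_N$ in the $|p_k|^2$ with coefficients $\tfrac{1}{\sqrt{2\pi}L_N}\hat f(k/L_N)$, and the exact matching of CUE trace moments with those of independent Gaussians in the range $\sum_r k_r\le N$ (Lemma 5.2) --- but you reach the Gaussian limit by a different route. The paper truncates the sum at $k\le\sqrt{NL_N}$, shows the remainder has vanishing variance (Proposition 4.1 plus Appendix 3), replaces $\varphi_k^{(N)}=|p_k|^2/k$ by genuine i.i.d.\ exponentials using the exact moment matching, and then applies Lindeberg--Feller; you instead compute all cumulants of $X_N$ directly. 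Your route is essentially the one the paper reserves for the harder microscopic case $L_N=N$ (Section 7), and to make it fully rigorous you need the cumulants-of-products formula (Lemma 9.2) to justify that ``only the fully diagonal terms survive'': the joint cumulant of the products $|p_{k_r}|^2$ is a sum over connected partitions of $\{k_1,-k_1,\dots,k_m,-k_m\}$ into blocks carrying trace cumulants, and it is the vanishing of all blocks of size at least three together with the constraint $k_i=k_j$ on pair blocks that forces the diagonal. That is a fixable elision, not an error. One caution common to your write-up and the paper: the ``super-polynomially small tail'' from $k\gtrsim N$ is negligible only because $\hat f(k/L_N)$ decays faster than any power of $k/L_N$, and this bound degrades as $L_N$ approaches $N$; it is clean for $L_N\le N^{1-\epsilon}$.

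The genuine gap is the case $\beta\neq 2$, which you explicitly leave open (``controlling these error terms uniformly \dots is the main technical difficulty''). The theorem asserts the result for all $\beta>0$, so the proof is incomplete without this. The paper closes it with the quantitative Jiang--Matsumoto estimate $\E\prod_{i}\varphi_{k_i}^{(N)}=\bigl(\E\prod_i\varphi_{k_i}\bigr)\bigl(1+O((k_1+\dots+k_n)/N)\bigr)$, valid for $k_i\ll N$: it splits the sum into the ranges $k\le L_N^2$, $L_N^2<k\le N/10$ and $k>N/10$, kills the variance of the last two using the decay of $\hat f$ together with crude bounds on $\E|\varphi_k^{(N)}-\E\varphi_k^{(N)}|^2$, and shows that the $l$-th moment of the first range agrees with that of the i.i.d.\ exponential model up to an error $O(L_N^{l/2+2}N^{-1})$ --- which is exactly where the hypothesis that $L_N$ grows slower than any positive power of $N$ enters. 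Your sketch identifies the right tool and the right role of that hypothesis, but without an estimate of this quantitative form the $\beta\neq2$ half of the theorem is not proved.
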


Finally, we consider the local case $L_N=N.$ We establish the following CLT for $\beta =2.$

\begin{thm}
Let $f \in C^{\infty}_c(\R)$ be an even, smooth, compactly supported function on the real line. Consider
\begin{align}
\label{localcase}
S_N(f(N\cdot))=\sum_{1\leq i\neq j\leq N} f(N\*(\theta_i-\theta_j)_c).
\end{align}
Then $(S_N(f(N\cdot)) -\E S_N(f(N\cdot)))\*N^{-1/2}$ converges in distribution to centered real Gaussian random variable with the variance
\begin{align}
\label{vvvv}
\frac{1}{\pi}\* \int_{\mathbb{R}} |\hat{f}(t)|^2 \*\min(|t|,1)^2\* dt -\frac{1}{\pi}\* \int_{|s-t|\leq 1, |s|\vee|t|\geq 1} \hat{f}(t)\*\hat{f}(s)\*
(1-|s-t|)\* ds\*dt\\
-\frac{1}{\pi}\* \int_{0\leq s,t\leq 1, s+t>1} \hat{f}(s)\*\hat{f}(t)\*(s+t-1) \*ds\*dt. \nonumber
\end{align}
\end{thm}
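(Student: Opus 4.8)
The plan is to exploit the determinantal ($\beta=2$) structure by first converting the pair statistic into a quadratic expression in the traces of powers $\Tr(U^m)=\sum_{j}e^{im\theta_j}$ of the Haar unitary $U$ whose eigenvalues are $e^{i\theta_j}$, and then to run the method of cumulants. Since $f\in C^\infty_c(\R)$ and $N$ is large, the map $\psi\mapsto f(N\psi)$ is supported in $[-\pi,\pi]$; writing its $2\pi$-periodic extension as a Fourier series $f(N\psi)=\sum_{m\in\Z}c_m e^{im\psi}$ and using $e^{im(\theta_i-\theta_j)_c}=e^{im(\theta_i-\theta_j)}$, I obtain
\[
S_N(f(N\cdot))=\sum_{m\in\Z}c_m\bigl(|\Tr(U^m)|^2-N\bigr),\qquad c_m=\frac{1}{2\pi N}\int_{\R}f(u)\,e^{-imu/N}\,du=\frac{\hat f(m/N)}{\sqrt{2\pi}\,N},
\]
the last equality being exact once $N$ is so large that $\op{supp}f(N\cdot)\subset[-\pi,\pi]$. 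Since $\hat f$ is Schwartz, only $|m|=O(N)$ contributes. The $m=0$ term is deterministic, and recording $\E|\Tr(U^m)|^2=\min(|m|,N)$ gives the centering $X_N:=S_N-\E S_N=\sum_{m}c_m\bigl(|\Tr(U^m)|^2-\min(|m|,N)\bigr)$.

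The second step is to compute the joint moments of the $|\Tr(U^m)|^2$ exactly, using the character-theoretic identity $\E_{U(N)}\!\left[p_\rho\,\overline{p_\sigma}\right]=\sum_{\lambda:\,\ell(\lambda)\le N}\chi^\lambda_\rho\chi^\lambda_\sigma$, which follows from orthonormality of Schur functions (this is the $\beta=2$ cumulant input of Section 5). Writing $|\Tr(U^m)|^2|\Tr(U^n)|^2=p_{(m,n)}\overline{p_{(m,n)}}$ one gets
\[
\cov\!\left(|\Tr(U^m)|^2,|\Tr(U^n)|^2\right)=\sum_{\lambda\vdash m+n,\ \ell(\lambda)\le N}\bigl(\chi^\lambda_{(m,n)}\bigr)^2-\min(m,N)\min(n,N).
\]
Without the length restriction the first sum equals $z_{(m,n)}$ by column orthogonality of the symmetric-group character table, reproducing the Gaussian answer $\min(m,N)^2$ on the diagonal and $0$ off it; all deviations are carried by partitions with $\ell(\lambda)>N$, which exist only when $m+n>N$. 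Passing to scaled variables $s=m/N$, $t=n/N$ and replacing sums by integrals, the diagonal variances $\sum_m c_m^2\min(m,N)^2$ produce the first term $\tfrac{1}{\pi}\int|\hat f(t)|^2\min(|t|,1)^2\,dt$, while the edge corrections---active where the length restriction bites, $m+n>N$ (region $s+t>1$), and where $\min(\cdot,N)$ saturates, $\max(|m|,|n|)\ge N$ (region $|s|\vee|t|\ge1,\ |s-t|\le1$)---produce the two remaining integrals of (\ref{vvvv}). Evaluating the restricted character sums asymptotically via the Murnaghan--Nakayama rule is where the explicit constants get pinned down.

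The third step is the CLT, for which I would show that every cumulant of order $k\ge3$ of $X_N$ obeys $C_k(X_N)=O(N)$, so that $C_k(N^{-1/2}X_N)=O(N^{1-k/2})\to0$ while $C_2$ converges to (\ref{vvvv}); the limit is then Gaussian by the method of moments. Here $C_k(X_N)=\sum_{m_1,\dots,m_k}c_{m_1}\cdots c_{m_k}\,\op{cum}\!\left(|\Tr(U^{m_1})|^2,\dots,|\Tr(U^{m_k})|^2\right)$, and the key point is that in the bulk the traces behave like independent complex Gaussians $\Tr(U^m)\approx\sqrt{m}\,Z_m$, so a joint cumulant vanishes unless the indices are linked---either coincident ($m_1=\cdots=m_k$) or tied together through the length-$\le N$ restriction. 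The coincident configuration contributes $\sum_m c_m^k\,\op{cum}_k(|\Tr(U^m)|^2)\sim(k-1)!\sum_m c_m^k\min(m,N)^k=O(N)$, and the edge-linked configurations live on a set of size $O(N)$ while carrying factors $c_{m_i}=O(1/N)$, so they too are $O(N)$. I expect the main obstacle to be precisely this last point: controlling the higher joint cumulants of $|\Tr(U^m)|^2$ uniformly in the edge regime $m_i\sim N$, where the Gaussian approximation fails and one must estimate the restricted character sums $\sum_{\ell(\lambda)\le N}\prod_i\chi^\lambda_{\rho_i}$ sharply enough to confirm that both the non-Gaussian corrections to the variance and all higher cumulants remain $O(N)$. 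Establishing these uniform bounds, together with the Riemann-sum evaluation yielding the two edge integrals of (\ref{vvvv}), is the technical heart of the argument.
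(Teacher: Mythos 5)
Your overall architecture coincides with the paper's: expand $f(N\cdot)$ in a Fourier series so that $S_N-\E S_N=\sum_m c_m\bigl(|\Tr(U^m)|^2-\min(|m|,N)\bigr)$ with $c_m=\hat f(m/N)/(\sqrt{2\pi}N)$, identify the variance as a diagonal Riemann sum plus edge corrections supported where $\max(|m|,|n|)\ge N$ or $m+n>N$, and close the CLT by showing all higher joint cumulants are $O(N)$. This is exactly the skeleton of Section 7. The difference is the computational engine: you propose Schur-function orthogonality and restricted character sums $\sum_{\ell(\lambda)\le N}\chi^\lambda_\rho\chi^\lambda_\sigma$ evaluated by Murnaghan--Nakayama, whereas the paper uses the explicit combinatorial cumulant formula for traces (Lemma 5.1, from formula (2.8) of \cite{sasha}), its consequences in Lemma 5.2, and the exact determinantal-kernel covariance computation of Proposition 4.1 / Corollary 4.2. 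Both engines are legitimate, and your starting identities (the exact Fourier coefficients, $\E|\Tr(U^m)|^2=\min(|m|,N)$, the covariance formula in terms of characters) are correct.

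However, as written the proposal has genuine gaps at precisely the two points you flag and do not close. First, the variance: you assert that the partitions with $\ell(\lambda)>N$ ``produce the two remaining integrals of (\ref{vvvv})'' but never compute the restricted sums. The signs and regions here are delicate --- for instance Corollary 4.2 shows the covariance equals $N-(s+t)<0$ for $1\le s\ne t\le N-1$, $s+t\ge N+1$, and $|s-t|-N<0$ for $1\le|s-t|\le N-1$, $N\le\max(s,t)$ --- so one must prove, e.g., that $\sum_{\lambda\vdash m+n,\,\ell(\lambda)>N}(\chi^\lambda_{(m,n)})^2=(m+n)-N$ in the first regime; without this the two negative integrals in (\ref{vvvv}) are not established. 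Second, the higher cumulants: $\operatorname{cum}\bigl(|\Tr(U^{m_1})|^2,\dots,|\Tr(U^{m_k})|^2\bigr)$ is a cumulant of \emph{products} of traces, so the bound $C_k(X_N)=O(N)$ requires expanding over partitions of a $2k$-element index set with no singletons and no blocks $\{m_i,-m_i\}$ (the paper's Lemma 9.2), and then a power-counting argument over \emph{all} such partitions --- including blocks of odd size straddling several pairs, and the borderline case of size-$4$ blocks not of the form $\{k_i,-k_i,k_j,-k_j\}$, where one needs both the uniform bound $|\kappa_n^{(N)}|\le c_nN$ and the fact that suboptimality forces a nontrivial linear relation among $k_1,\dots,k_k$, cutting the lattice count to $O(N^{k-1})$. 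This is the content of the paper's Lemma 7.2, proved by an induction with several special cases (e.g.\ paired three-point blocks with $k_i+k_j=k_p$, which individually contribute at order $N$). Your heuristic ``traces are asymptotically independent Gaussians, so only linked configurations survive'' points in the right direction but does not substitute for this accounting, and in the character language it would demand uniform estimates on multi-fold restricted character sums in the regime $m_i\sim N$, which you acknowledge you have not obtained. Until these two steps are carried out, the proof is a plausible program rather than a proof.
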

\begin{remark}
As we noted in Section 1, the limiting distribution of (\ref{localcase}) does not change if one replaces the circular difference (\ref{circlediff}) in the argument of $f(N\cdot)$
by the regular one and studies a pair counting statistic (\ref{localnoncirc}) instead
since the number of pairs of the eigenvalues in a $O(N^{-1})$ neighborhood of $\theta=0$ is bounded in probability.
\end{remark}

\section{Proof of Theorem 2.1}

For trigonometric polynomials, Theorem 2.1 follows from the Johansson's CLT for linear statistics \cite{johansson1} and simple computations
in (\ref{formula}) below.
To prove the result for a wider class of test functions one needs variance bounds and standard $\epsilon/3$ type arguments 
(for the convenience of the reader, presented in Appendix 1.)
The proof  under the optimal condition on $f$ for $\beta=2$ requires careful variance computations given in Section 4.
\begin{proof}$\\$
Consider an even real-valued test function $f.$ Then
        \begin{align}
\label{formula}
            S_N(f)=\sum_{1\leq i\neq j\leq N}f(\theta_i-\theta_j)=2\sum_{m=1}^{\infty}\hat{f}(m)\left|\sum_{j=1}^N\exp\of{im\theta_j}\right|^2+ 
\hat{f}(0)\*N^2-N\*f(0).
        \end{align}
        In particular, for even trigonometric polynomials $f_k$ of degree $k$ we have:

            $$S_{N}(f_k)=2\sum_{m=1}^{k}\hat{f_k}(m)\left|\sum_{j=1}^N\exp\of{im\theta_j}\right|^2+\hat{f_k}(0)\*N^2-Nf_k(0).$$
We recall that \cite{johansson1} gives convergence of the real and imaginary parts of  $\sum_{j=1}^N\exp\of{im\theta_j}$ to 
independent  random variables $\mathcal{N}(0, \frac{m}{\beta})$ as  $N\rightarrow \infty$. 
Since the absolute value squared of a standard complex Gaussian random variable is exponentially distributed, the result follows for trigonometric 
polynomials. 

For more general test functions $f$, we obtain the desired 
result by approximating $f$ by the partial sums $f_k$ of Fourier series and interchanging the limits in (\ref{formula}). In fact, 
for $\beta=2$ we are able to prove the result of Theorem 2.1 under the optimal condition \[\sum_{k\in \mathbb{Z}}|\hat{f}(k)|^2\*|k|^2<\infty.\]
To achieve it, we first carefully compute the variance of $S_N(f)$ for finite $N$ and show that ``error'' terms are negligible in the limit. 
This is done in Section 4. In particular, we will prove Proposition 4.1, Corollary 4.2, and Proposition 4.3 in the next section. 
Then a standard $\epsilon/3$- type argument finishes the proof 
(see the Appendix 1 for the details). 

For $\beta\neq 2,$ we replace the Chebyshev bound with a corresponding Markov bound and apply the asymptotics 
results of Jiang and Matsumoto \cite{jm} on the moments of traces. Again, we refer the reader to the Appendix 1 for the details.

\section{Variance Calculation for $\beta=2$}
This section is devoted to the computation and asymptotic analysis of the variance of the pair counting statistic $S_N(f)$ defined in (\ref{pairs1}).
The main results of the section are Proposition 4.1 and Proposition 4.3. We assume $\beta=2$ for the rest of the section.
    \begin{prop} Let $f$ be a real even function on the unit circle such that $f'\in L^2(\mathbb{T})$ and let $\beta=2.$ Then
        \begin{align*}
            \var(S_N(f)) =4\left(\sum_{1\leq s\leq N-1}s^2(\hat{f}(s))^2 + 
N^2\sum_{N\leq s} (\hat{f}(s))^2 - N\sum_{\substack{N\leq s}}(\hat{f}(s))^2\right)
        \end{align*}
        \[
            -4\left(\sum_{\substack{1\leq s,t \\ 1\leq |s-t|\leq N-1\\ N\leq \max(s,t)}}(N-|s-t|)\hat{f}(s)\hat{f}(t)\hspace{2mm}
            +\sum_{\substack{1\leq s,t\leq N-1\\N+1\leq s+t}}((s+t)-N) \hat{f}(s)\hat{f}(t)\right).
        \]
        \end{prop}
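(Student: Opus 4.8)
The plan is to read $S_N(f)$ off the spectral expansion (\ref{formula}) and reduce everything to covariances of moduli of power sums. Since $\beta=2$ is the CUE, write $p_m=\sum_{j=1}^{N}e^{im\theta_j}=\Tr(U^m)$ for $U$ Haar-distributed on $U(N)$. The terms $\hat f(0)N^2$ and $-Nf(0)$ in (\ref{formula}) are deterministic, so
\[
S_N(f)-\E S_N(f)=2\sum_{m=1}^{\infty}\hat f(m)\of{|p_m|^2-\E|p_m|^2},
\]
and therefore
\[
\var(S_N(f))=4\sum_{m=1}^{\infty}\sum_{n=1}^{\infty}\hat f(m)\hat f(n)\,\cov\of{|p_m|^2,|p_n|^2}.
\]
I would first establish the identity for even trigonometric polynomials $f$, where both sums are finite and the computation is a finite exact one, and then extend to all $f$ with $f'\in L^2(\mathbb{T})$ (equivalently $\sum_m m^2\hat f(m)^2<\infty$) by approximation, using crude bounds on the covariances to control the tails. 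Thus the substance of the proof is an exact finite-$N$ evaluation of $\cov(|p_m|^2,|p_n|^2)$.

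For the covariances I would exploit the determinantal and rotation-invariant structure of the CUE. Rotation invariance $U\mapsto e^{i\alpha}U$ forces $\E\big[\prod_k p_{a_k}\big]=0$ unless $\sum_k a_k=0$, and the standard second-moment identity gives $\E[p_m\overline{p_n}]=\delta_{mn}\min(m,N)$ (a one-line computation from the kernel below). Writing $|p_m|^2=p_mp_{-m}$ and decomposing the fourth moment $\E[p_mp_{-m}p_np_{-n}]$ into joint cumulants, only those set partitions all of whose blocks have vanishing index sum survive. For $m\neq n$ the only survivors are the pairing $\{m,-m\},\{n,-n\}$ and the full block, so
\[
\cov\of{|p_m|^2,|p_n|^2}=\kappa_4(p_m,p_{-m},p_n,p_{-n}),\qquad m\neq n,
\]
whereas for $m=n$ one extra pairing contributes and $\var(|p_m|^2)=\min(m,N)^2+\kappa_4(p_m,p_{-m},p_m,p_{-m})$. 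So the problem is reduced to the exact finite-$N$ fourth cumulant of power traces.

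To evaluate that cumulant I would use the determinantal kernel $K_N(\theta,\phi)=\frac{1}{2\pi}\sum_{k=0}^{N-1}e^{ik(\theta-\phi)}$, which is the orthogonal projection onto the modes $0,\dots,N-1$. Each $p_a$ is a linear statistic with test function $e^{ia\theta}$, so $\kappa_4(p_m,p_{-m},p_n,p_{-n})$ is a fixed finite combination of traces $\Tr(M_{e_{a_1}}K_NM_{e_{a_2}}K_N\cdots)$, where $M_{e_a}$ is multiplication by $e^{ia\theta}$; equivalently one may expand $|p_m|^2|p_n|^2$ as a quadruple sum over eigenvalue indices and integrate against the correlation functions $\rho_1,\dots,\rho_4$. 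Because $K_N$ keeps only the modes $0,\dots,N-1$, every trace collapses to a count of integers $0\le k\le N-1$ satisfying linear constraints, i.e. to quantities such as $\min(m,N)$, $(N-|m-n|)_+$ and $(N-(m+n))_+$. Carrying this out gives, for $m\neq n$,
\[
\cov\of{|p_m|^2,|p_n|^2}=-(N-|m-n|)\,\mathbf{1}\{1\le|m-n|\le N-1,\ \max(m,n)\ge N\}-(m+n-N)\,\mathbf{1}\{m,n\le N-1,\ m+n\ge N+1\},
\]
and on the diagonal $\var(|p_m|^2)=m^2$ for $1\le m\le N-1$ and $\var(|p_m|^2)=N^2-N$ for $m\ge N$ (so that the finite-$N$ fourth cumulant vanishes when $2m\le N$, equals $N-2m$ for $N/2<m\le N-1$, and equals $-N$ for $m\ge N$).

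Finally I would substitute these values into $\var(S_N(f))=4\sum_{m,n}\hat f(m)\hat f(n)\cov(|p_m|^2,|p_n|^2)$ and regroup: the diagonal $m=n$ terms give the first parenthesis $\sum_{s\le N-1}s^2\hat f(s)^2+(N^2-N)\sum_{s\ge N}\hat f(s)^2$, while the two families of off-diagonal corrections give the two double sums of the statement, the $s=t$ part of the second double sum absorbing the diagonal correction $N-2m$ in the range $N/2<m\le N-1$. I expect the main obstacle to be the exact finite-$N$ fourth-cumulant evaluation: one must retain every boundary term, i.e. the discrepancy between the naive ``$N=\infty$'' Gaussian answer (where all higher cumulants vanish) and the true count of modes in $\{0,\dots,N-1\}$, and then perform the case bookkeeping that collapses the resulting double sum into exactly the grouped form stated. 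The approximation step of the first paragraph, by contrast, is routine once the covariances are bounded by a constant times $\min(m,N)\min(n,N)$.
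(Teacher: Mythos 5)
Your proposal is correct in substance, but it organizes the computation differently from the paper's Section 4. The paper proves Proposition 4.1 head-on: it writes $\E S_N(f)^2$ as integrals against the $2$-, $3$- and $4$-point correlation functions, expands each determinant of the projection kernel $Q_N$, and then spends two pages regrouping the resulting Fourier sums (equations (4.0)--(4.3) and what follows); the covariance table you take as your basic object is then extracted \emph{as a corollary} (Corollary 4.2). You invert this logic: you compute $\cov(|p_m|^2,|p_n|^2)$ first, via the joint-cumulant decomposition of $\E[p_mp_{-m}p_np_{-n}]$ and the vanishing of cumulants of traces whose blocks have nonzero index sum, and then assemble the proposition by summation. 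That cumulant route is in fact the one the paper itself adopts later (Section 5, Lemmas 5.1--5.2, and equation (7.3) in the proof of Theorem 2.5), so your plan is viable and arguably more modular: the two-index covariance is reusable, and the case bookkeeping is localized in one exact finite-$N$ cumulant evaluation rather than spread over a long chain of sum manipulations. The price is that this central evaluation is only sketched in your write-up, whereas the paper carries out the equivalent computation in full; your claimed values do agree with Corollary 4.2, and your tail-control argument for passing from trigonometric polynomials to general $f$ with $f'\in L^2$ is sound (Cauchy--Schwarz gives $|\cov|\le\min(m,N)\min(n,N)$, which suffices for absolute convergence at fixed $N$). One internal slip: you assert $\var(|p_m|^2)=m^2$ for all $1\le m\le N-1$, which is false for $N/2<m\le N-1$ where the value is $m^2+N-2m$; your own parenthetical (the cumulant equals $N-2m$ there) and your final regrouping paragraph use the correct value, so this is an inconsistency of statement rather than of argument, but it should be fixed, since that diagonal correction is exactly what the $s=t$ part of the second double sum in the proposition must absorb.
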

        As a corollary, we obtain:
        \begin{cor} Let $s,t\in\mathbb{Z}_{\geq 0}$ and $\{\theta_m\}_{m=1}^N$ be distributed according to $CUE(N)$. Then
            \begin{align*}
                    \cov\left(\left|\sum_{m=1}^N e^{is\theta_m}\right|^2,\left|\sum_{m=1}^N e^{it\theta_m}\right|^2\right)= 
                    \begin{cases} s^2, &\text{  } 1\leq s=t\leq N-1,\hspace{2mm} 2s\leq N,\\
                                  N+ s^2 -2s, &\text{  } 1\leq s=t\leq N-1,\hspace{2mm} N+1\leq 2s,\\                                  
                                  N(N-1) , &\text{  } N\leq s=t,\\
                                  |s-t|-N, &\text{  } 1\leq |s-t|\leq N-1,\hspace{2mm}N\leq \max(s,t),\\
                                  N-(s+t), &\text{  } 1\leq s\neq t\leq N-1, N+1\leq s+t,\\
                                  0,&\text{else.}
                    \end{cases}.
                \end{align*}
\end{cor}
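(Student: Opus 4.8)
The plan is to read the covariances directly off Proposition 4.1 by viewing the variance as a quadratic form in the Fourier coefficients. Write $T_m = \left|\sum_{j=1}^N \exp(im\theta_j)\right|^2$, so that formula (\ref{formula}) from the proof of Theorem 2.1 gives, for even $f$, the identity $S_N(f) = 2\sum_{m=1}^\infty \hat{f}(m)\,T_m + \hat{f}(0)N^2 - Nf(0)$. The last two terms are deterministic, so
\[
\var(S_N(f)) = 4\sum_{1\le s,t} \hat{f}(s)\hat{f}(t)\,\cov(T_s,T_t).
\]
This I would compare, monomial by monomial, with the closed expression of Proposition 4.1. Both sides are quadratic forms in the real variables $\{\hat{f}(m)\}_{m\ge 1}$, and for any finite index set one can choose an even trigonometric polynomial $f$ (for which $f'\in L^2(\mathbb{T})$ trivially holds, so that Proposition 4.1 applies) realizing arbitrary prescribed values of those coefficients. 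Hence the coefficient of each monomial must agree on the two sides, and the covariances are extracted by polarization.

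For the diagonal terms I would match the coefficient of $\hat{f}(s)^2$, which equals $4\var(T_s)$ on the probabilistic side. On the Proposition 4.1 side the first group contributes $4s^2$ for $1\le s\le N-1$ and $4(N^2-N)$ for $s\ge N$, while the only diagonal contribution from the subtracted sums comes from the second one (the first requires $|s-t|\ge 1$), giving $-4(2s-N)$ exactly when $1\le s\le N-1$ and $2s\ge N+1$. Dividing by $4$ reproduces the three diagonal values $s^2$, $N+s^2-2s$, and $N(N-1)$.

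For the off-diagonal terms, fix $s\neq t$; the coefficient of the monomial $\hat{f}(s)\hat{f}(t)$ on the probabilistic side is $4\bigl(\cov(T_s,T_t)+\cov(T_t,T_s)\bigr)=8\cov(T_s,T_t)$, the factor $2$ arising from the two ordered pairs $(s,t)$ and $(t,s)$ in the symmetric double sum. The same symmetrization applies to each of the two subtracted sums in Proposition 4.1, so the matching coefficient there is $-8(N-|s-t|)$ on the first constraint region and $-8(s+t-N)$ on the second. Dividing by $8$ yields $\cov(T_s,T_t)=|s-t|-N$ and $N-(s+t)$ respectively.

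The step I would treat most carefully — and the only genuine subtlety — is checking that the two constraint regions are mutually exclusive for $s\neq t$ and that together with the diagonal they cover precisely the listed cases: the first sum requires $N\le\max(s,t)$ while the second requires $s,t\le N-1$, i.e.\ $\max(s,t)<N$, so no pair lies in both, and any off-diagonal pair outside both regions gives $\cov(T_s,T_t)=0$. I would also record the trivial boundary cases: if $s=0$ or $t=0$ then $T_0=N^2$ is deterministic and the covariance vanishes, matching the ``else'' entry. This disjointness, combined with the symmetrization bookkeeping (the factor $2$ present for $s\neq t$ but absent on the diagonal, where the pair $(s,s)$ occurs only once), is where care is needed; the remainder is routine coefficient extraction.
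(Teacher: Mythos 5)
Your proposal is correct and matches the paper's (implicit) derivation: Corollary 4.2 is obtained from Proposition 4.1 exactly by viewing $\var(S_N(f))=4\sum_{s,t\ge 1}\hat f(s)\hat f(t)\cov(T_s,T_t)$ as a quadratic form in the Fourier coefficients and extracting coefficients via polarization over even trigonometric polynomials. Your bookkeeping of the diagonal versus off-diagonal factor of $2$, the disjointness of the two constraint regions, and the degenerate cases $s=0$ or $t=0$ is all accurate.
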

We note that the above formula immediately extends to the case where either $s$ or $t$ is negative, 
since $\left|\sum e^{is\theta_m}\right|=\left|\sum e^{-is\theta_m}\right|$.
For a graphical representation of the covariance function, see the diagram below.
                \vspace{5mm}
                \begin{center}
                    \begin{figure}[h]
                        \begin{tikzpicture}[scale=0.57, transform shape]
                            \node[] (A) at (9,9) {$N(N-1)$};
                            \node[] (s) at (0,12) {s};
                            \node[] (t) at (12,0) {t};
                            \node[] (SAL) at (6,-1/2) {$N-1$};
                            \node[] (TAL) at (-3/4,6) {$N-1$};
                            \node[] (l1) at (2,10) {$0$};
                            \node[] (l2) at (10,2) {$0$};
                            \node[] (l4) at (3,1) {$0$};
                            \node[] (l5) at (1,3) {$0$};
                            \node[] (l3) at (1.5,1.5) {$s^2$};
                            \node[] (l6) at (9,6) {$|s-t|-N$};
                            \node[] (l7) at (6,9) {$|s-t|-N$};
                            \node[] (l8) at (4.75,3) {$N-(s+t)$};
                            \node[] (l9) at (3, 4.75) {$N-(s+t)$};
                            \node[] (l10) at (4,4) {$N+s^2-2s$};
    
                            \path [->] (0,0) edge node[left] {} (s);
                            \path [->] (0,0) edge node[left] {} (t);
        
                            \path [-] (6,0) edge node[left] {} (6,6);
                            \path [-] (0,6) edge node[left] {} (6,6);
                            \path [-] (6,0) edge node[left] {} (0,6);
        
                            \path [->] (6,0) edge node[left] {} (12,6);
                            \path [->] (0,6) edge node[left] {} (6,12);
        
                            \path [dashed] (6,6) edge node[left]  {} (A);
                            \path [->] (A) edge[dashed] node[right]  {} (12,12);
                            \path [dashed] (0,0) edge node[left] {} (l3);
                            \path [dashed] (l3) edge node[left] {} (l10);
                            \path [dashed] (l10) edge node[left] {} (6,6);
                        \end{tikzpicture}
                        \caption{$\cov(s,t)$}
                    \end{figure}
                \end{center}

We need next proposition to prove Theorem 2.1 under the optimal assumptions on the test function $f.$
        \begin{prop}Let $\beta=2$ and $f$ satisfy the conditions of Theorem 2.1, i.e. $f$  is an even real function such 
that $f'\in L^2(\mathbb{T}).$ Then 
            \begin{align*}
                        \var(S_N(f))= 4\sum_{1\leq k\leq N-1}k^2|\hat{f}(k)|^2+o(1).
            \end{align*}
        \end{prop}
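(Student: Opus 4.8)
The plan is to start from the exact finite-$N$ formula for $\var(S_N(f))$ furnished by Proposition 4.1, single out the main term $4\sum_{1\le k\le N-1}k^2\hat{f}(k)^2$, and show that every remaining piece tends to $0$ as $N\to\infty$. Throughout I will use that $f'\in L^2(\mathbb{T})$ is equivalent, via Parseval applied to $f'$ (whose $k$-th Fourier coefficient is $ik\hat{f}(k)$), to $C:=\sum_{s\ge 1}s^2\hat{f}(s)^2<\infty$; consequently the tails $\epsilon_M:=\sum_{s\ge M}s^2\hat{f}(s)^2$ satisfy $\epsilon_M\to 0$. Since $f$ is real and even, $\hat{f}(s)$ is real, so $\hat{f}(s)^2=|\hat{f}(s)|^2$ and the main term already matches the claimed one with no manipulation.

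First, the diagonal tail. The terms $4\big(N^2\sum_{s\ge N}\hat{f}(s)^2-N\sum_{s\ge N}\hat{f}(s)^2\big)=4N(N-1)\sum_{s\ge N}\hat{f}(s)^2$ are controlled by the crude inequality $N^2\sum_{s\ge N}\hat{f}(s)^2\le \sum_{s\ge N}s^2\hat{f}(s)^2=\epsilon_N$, so this contribution is $O(\epsilon_N)=o(1)$.

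The heart of the matter is the two off-diagonal double sums, and this is the step I expect to be the main obstacle. The naive move — factoring out the coefficient and bounding by $\sum_{s,t}|\hat{f}(s)\hat{f}(t)|$ — would require $\sum_s s|\hat{f}(s)|<\infty$, i.e. $\ell^1$-type control, which is strictly stronger than the hypothesis $f'\in L^2$. Instead I will exploit the geometry of the two summation regions together with an \emph{asymmetric} Cauchy--Schwarz. The common mechanism rests on two elementary observations: on the region of the first sum one has $N-|s-t|\le \min(s,t)$ (because $\max(s,t)\ge N$), and on the region of the second sum one has $(s+t)-N\le \min(s,t)$ (because $s,t\le N-1$). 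Thus in both cases each summand is bounded by $\min(s,t)\,|\hat{f}(s)|\,|\hat{f}(t)|$. By symmetry I restrict to $s>t$ (treating the diagonal $s=t$ of the second sum, which Proposition 4.1 folds into that sum's range, as a separate easy single sum), so $\min(s,t)=t$ and the summand is at most $|\hat{f}(s)|\cdot\big(t|\hat{f}(t)|\big)$, placing the index weight on the \emph{smaller} index.

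Applying Cauchy--Schwarz to $\sum_{(s,t)}|\hat{f}(s)|\cdot(t|\hat{f}(t)|)$ produces two factors. In the factor $\sum_{(s,t)}\hat{f}(s)^2$ the variable $s$ is the larger index, which the region forces to satisfy $s\ge N$ (first sum) or $s\ge (N+1)/2$ (second sum); counting the admissible $t$ for each $s$ and using $s\gtrsim N$ to convert $\sum\hat{f}(s)^2$ into a multiple of a tail $\epsilon_{cN}$, this factor is of order $\sqrt{\epsilon_{cN}/N}$ up to constants, hence $o(1)$. In the other factor $\sum_{(s,t)}t^2\hat{f}(t)^2$ the multiplicity of $s$ for each fixed $t$ is at most $N-1$, so it is bounded by $(N-1)C=O(N)$. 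The product of a factor of size $\sqrt{\epsilon_{cN}/N}$ with one of size $\sqrt{NC}$ is $O(\sqrt{\epsilon_{cN}\,C})=o(1)$; it is precisely this asymmetric bookkeeping (weight only on the smaller index) that makes the estimate succeed, whereas a symmetric application would leave an uncontrolled $\sum s^3\hat{f}(s)^2$. Finally, the diagonal piece $\sum_{(N+1)/2\le s\le N-1}(2s-N)\hat{f}(s)^2$ is handled directly: $2s-N\le s\le \tfrac{2}{N+1}s^2$ on this range, giving the bound $\tfrac{2}{N+1}\epsilon_{(N+1)/2}=o(1)$. Collecting the four estimates yields $\var(S_N(f))=4\sum_{1\le k\le N-1}k^2\hat{f}(k)^2+o(1)$, as claimed.
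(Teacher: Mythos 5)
Your proposal is correct, and it reaches the conclusion by a genuinely different route from the paper. The paper isolates the same four error terms from Proposition 4.1 and disposes of the diagonal tail exactly as you do, but it handles the two off-diagonal double sums via an operator-theoretic lemma (Lemma 4.4): it encodes each sum as a quadratic form $\langle X_N, A_N X_N\rangle$ in the vector $X_N=(s|\hat{f}(s)|)_s$, where $A_N$ is built from the Ces\`aro-type matrix $(B_N)_{s,t}=s^{-1}\mathds{1}_{(t\le s)}$; it proves $\|B_N\|_{op}\le 3$ from the identity $B_NB_N^{T}=B_N+B_N^{T}-D$, and concludes by weak convergence of $A_N$ to zero (truncating $X_N$ onto finitely many coordinates and letting first $N$, then the truncation level, go to infinity). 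Your argument replaces this with the pointwise dominations $N-|s-t|\le \min(s,t)$ and $(s+t)-N\le\min(s,t)$ on the respective regions, followed by a Cauchy--Schwarz that deliberately places the index weight on the smaller variable; this is the same structural insight (the weight $t|\hat{f}(t)|$ must carry the \emph{smaller} index, exactly as in the paper's $\frac{1}{t}\sum_{s=N-t+1}^{N}x_s$ rearrangement), but executed with elementary inequalities rather than uniform operator-norm bounds. What each buys: your version is more self-contained and, unlike the paper's qualitative weak-convergence step, yields an explicit rate $O(\sqrt{\epsilon_N\,C})$ for the error; the paper's operator framework, on the other hand, is set up so that it can be reused almost verbatim for the mesoscopic regime in Appendix 3 (Lemma 10.2), which is presumably why the authors chose it. Your treatment of the diagonal $s=t$ of the second sum and your verification that the region constraints force $s\ge N$ (resp.\ $s\ge(N+1)/2$) for the larger index are both correct, so I see no gap.
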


First, we prove Proposition 4.1. The proof follows from quite straightforward, but somewhat tedious computations given below.

        \begin{proof}$\\$
         We may assume, without loss of generality, that $\hat{f}(0)=0$. Let $\rho_{N,k}(\overbar{\theta})$ be the $k$-point correlation 
functions for $\{\theta_j\}_{j=1}^N$ distributed according to $CUE(N)$. It is well known that CUE point correlation functions have 
determinantal structure (see e.g. \cite{mehta}). In particular, if $Q_N(x,y)$ is the kernel of the orthogonal projection on \\
$Span \{ \frac{1}{\sqrt{2\pi}}\*e^{i\*k\*x}, \ \ 0\leq k\leq N-1\},$ namely
            \begin{align}
\label{detform}
                Q_N(x, y)= \frac{1}{2\pi}\sum_{k=0}^{N-1} e^{i\*k\*(x-y)},
            \end{align}
        then 
            \[
                \rho_{N,k}(\theta_1,\ldots, \theta_k)= \det\mathlarger{\mathlarger{(}}Q_N(\theta_i,\theta_j)\mathlarger{\mathlarger{)}}_{1\leq i, j \leq k}.
            \]
        A simple computation using (\ref{detform}) and $\hat{f}(0)=0$ gives    
        \begin{align*}
            \E (S_N(f))&= \E \left(\sum_{0\leq i\neq j\leq N-1} f(\theta_i-\theta_j)\right)\\
            &= \int_{\mathbb{T}^2} f(\theta_1-\theta_2)\rho_{N,2}(\theta_1,\theta_2)d\theta_1d\theta_2\\
            &= \sum_{|k|<N}(|k|-N)\hat{f}(k).
        \end{align*}
          
    Furthermore, the variance of $S_N(f)$ is given by
        \begin{align}
\label{400}
             &\E ((S_N(f))^2)-(\E (S_N(f)))^2=\\
\label{401}
 &2\int_{\mathbb{T}^2} f^2(\theta_1-\theta_2)\rho_{N,2}(\theta_1,\theta_2)d\theta_1d\theta_2 \\
\label{402}
             &+ 4\int_{\mathbb{T}^3} f(\theta_1-\theta_2)f(\theta_2-\theta_3)\rho_{N,3}(\theta_1,\theta_2,\theta_3)d\theta_1d\theta_2d\theta_3\\
\label{403}
             &+ \int_{\mathbb{T}^4} f(\theta_1-\theta_2)f(\theta_3-\theta_4)\rho_{N,4}(\theta_1,\theta_2,\theta_3,\theta_4)
d\theta_1d\theta_2d\theta_3d\theta_4-(\E(S_N(f)))^2,
\end{align}
which can be rewritten as
\begin{align}
\label{4.0}
&\E ((S_N(f))^2)-(\E (S_N(f)))^2=\\
\label{4.1}
             &2N^2\left(\widehat{f^2}(0)-\sum_{|k|\leq N-1}|\hat{f}(k)|^2\right) \\
\label{4.2}
             &+ 4\sum_{0\leq j,k,l\leq N-1}\hat{f}(j-k)\hat{f}(k-l) - 2\sum_{0\leq j,k,l,m\leq N-1}\hat{f}(j-k)\hat{f}(k-l)\chi_{(j-m=k-l)}\\
\label{4.3}
             &-2N\sum_{|k|\leq N-1}\widehat{f^2}(k) + 2\sum_{|k|\leq N-1}|k|\widehat{f^2}(k)+2\sum_{|k|\leq N-1}|k|^2|\hat{f}(k)|^2.
        \end{align}

The transition from (\ref{400}-\ref{403}) to (\ref{4.0}-\ref{4.3}) relies on straightforward but somewhat tedious computations given below.
The expression (\ref{401}) is equal to
\begin{align}
\label{411}
2\*\left(N^2\*\widehat{f^2}(0)+\sum_{|k|<N} (|k|-N)\*\widehat{f^2}(k) \right).
\end{align}
Using  $\hat{f}(0)=0$ the expression (\ref{402}) can be rewritten as
\begin{align}
\label{412}
&4\*\int_{\mathbb{T}^3} (f\ast f)(\theta_1-\theta_3)\*\rho_{N,1}(\theta_2)\*\rho_{N,2}(\theta_1,\theta_3)d\theta_1 d\theta_2d\theta_3\\
\label{413}
&4\*\int_{\mathbb{T}^3} f(\theta_1-\theta_2)\* f(\theta_2-\theta_3)\*Q_N(\theta_1, \theta_2)\*Q_N(\theta_2, \theta_3)\*Q_N(\theta_3, \theta_1)\*
d\theta_1d\theta_2 d\theta_3\\
\label{414}
&4\*\int_{\mathbb{T}^3} f(\theta_1-\theta_2)\* f(\theta_2-\theta_3)\*Q_N(\theta_1, \theta_3)\*Q_N(\theta_3, \theta_2)\*Q_N(\theta_2, \theta_1)\*
d\theta_1d\theta_2 d\theta_3.
\end{align}
Again using $\hat{f}(0)=0$ we can rewrite
(\ref{412}) as
\begin{align}
\label{415}
4\*N\*\sum_{|k|< N}(|k|-N)|\hat{f}(k)|^2.
\end{align}
The terms (\ref{413}) and (\ref{414}) are equal to each other and together contribute
\begin{align}
\label{416}
8\*\sum_{0\leq j,k,l\leq N-1}\hat{f}(j-k)\hat{f}(k-l).
\end{align}
We now turn our attention to (\ref{403}). We can rewrite it as
\begin{align}
\label{417}
&2\*\int_{\mathbb{T}^4} f(\theta_1-\theta_2)\* f(\theta_3-\theta_4)\*|Q_N(\theta_1, \theta_3)|^2\*|Q_N(\theta_2, \theta_4)|^2 
\*d\theta_1d\theta_2 d\theta_3 d\theta_4\\
\label{418}
&-2\*\int_{\mathbb{T}^4} f(\theta_1-\theta_2)\* f(\theta_3-\theta_4)\*Q_N(\theta_1, \theta_2)\*Q_N(\theta_2, \theta_3)\*Q_N(\theta_3, \theta_4)
\*Q_N(\theta_4, \theta_1)
\*d\theta_1d\theta_2 d\theta_3 d\theta_4\\
\label{419}
&-2\*\int_{\mathbb{T}^4} f(\theta_1-\theta_2)\* f(\theta_3-\theta_4)\*Q_N(\theta_1, \theta_2)\*Q_N(\theta_2, \theta_4)\*Q_N(\theta_4, \theta_3)
\*Q_N(\theta_3, \theta_1)
\*d\theta_1d\theta_2 d\theta_3 d\theta_4\\
\label{420}
&-2\*\int_{\mathbb{T}^4} f(\theta_1-\theta_2)\* f(\theta_3-\theta_4)\*Q_N(\theta_1, \theta_4)\*Q_N(\theta_4, \theta_2)\*Q_N(\theta_2, \theta_3)
\*Q_N(\theta_3, \theta_1)
\*d\theta_1d\theta_2 d\theta_3 d\theta_4.
\end{align}
The term (\ref{417}) is equal to
\begin{align}
\label{421}
&2\*\sum_{|k|<N} (N-|k|)^2\*|\hat{f}(k)|^2=\\
\label{422}
&2\*\sum_{|k|<N} |k|^2\*|\hat{f}(k)|^2 +2\*N^2\*\sum_{|k|<N} |\hat{f}(k)|^2 -4\*N\*\sum_{|k|<N} |k|\*|\hat{f}(k)|^2.
\end{align}
Terms  (\ref{418}) and (\ref{419}) are equal to each other and together contribute
\begin{align}
\label{423}
-4\*\sum_{0\leq j,k,l\leq N-1}\hat{f}(j-k)\hat{f}(k-l).
\end{align}
Finally, the expression (\ref{420}) can be rewritten as
\begin{align}
\label{424}
2\*\sum_{0\leq j,k,l,m\leq N-1}\hat{f}(j-k)\hat{f}(k-l)\chi_{(j-m=k-l)}.
\end{align}
Combining (\ref{411}), (\ref{415}-\ref{416}), and (\ref{422}-\ref{424}), we arrive at the formula (\ref{4.0}-\ref{4.3}) for the variance of $S_N(f).$

To finish the proof of Proposition 4.1 we have to carefully evaluate each of the terms in (\ref{4.1}-\ref{4.3}).
The term (\ref{4.1}) can be rewritten use the Placherel theorem 
\[ \widehat{f^2}(0)=\sum_{-\infty}^{\infty} \hat{f}(-k)\*\hat{f}(k)=\sum_{-\infty}^{\infty} |\hat{f}(k)|^2. \] 
as
        \begin{align}
            2N^2\left(\sum_{k\in\mathbb{Z}}|\hat{f}(k)|^2-\sum_{|k|\leq N-1}|\hat{f}(k)|^2\right)= 4N^2\sum_{k\geq N}|\hat{f}(k)|^2.
        \end{align}

Next, we rewrite the terms in (\ref{4.2}). We start with the first one:
        \begin{align}
    \label{4.5}
            4\sum_{0\leq j,k,l\leq N-1}\hat{f}(j-k)\hat{f}(k-l)&= 4\sum_{|s|,|t|\leq N-1} \hat{f}(s)\hat{f}(t)\max(0, N-(\max(0,s,t)-\min(0,s,t)))\\
    \label{4.6}
            &= 4\sum_{|s|,|t|\leq N-1} \hat{f}(s)\hat{f}(t)\max(0, N-L(s,t)),
        \end{align}
        where 
            \[
                L(s,t)= \begin{cases}
                            \max(|s|,|t|),& \text{if } sgn(s)=sgn(t)\\
                            |s|+|t|,              & \text{otherwise}.
                        \end{cases}
            \]
        Splitting up the sum and recalling that $\hat{f}(s)=\hat{f}(-s)$, we can further rewrite the first term in (\ref{4.2}) as
            \[
                2\sum_{\substack{|s|,|t|\leq N-1\\ |s|+|t|\leq N-1}} \hat{f}(s)\hat{f}(t)(N-(|s|+|t|)) + 
4\sum_{\substack{|s|,|t|\leq N-1\\ sgn(s)=sgn(t)}} \hat{f}(s)\hat{f}(t)(N-\max(|s|,|t|)).
            \]
        We rewrite the second term in (\ref{4.2}) as
            \[
                2\sum_{0\leq j,k,l,m\leq N-1}\hat{f}(j-k)\hat{f}(k-l)\mathlarger\chi_{(j-m=k-l)}= 
2\sum_{\substack{|s|,|t|\leq N-1\\ |s|+|t|\leq N-1}} \hat{f}(s)\hat{f}(t)(N-(|s|+|t|)).
            \]

        Thus, (\ref{4.2}) becomes
            \begin{align*}
                4\sum_{\substack{|s|,|t|\leq N-1\\ sgn(s)=sgn(t)}} \hat{f}(s)\hat{f}(t)(N-\max(|s|,|t|)),
            \end{align*}
        which can be rewritten as
            \begin{align}
            \label{4.7}
                2N\sum_{\substack{|s|,|t|\leq N-1}} \hat{f}(s)\hat{f}(t)- 
2\sum_{\substack{|s|,|t|\leq N-1\\ sgn(s)=sgn(t)}} \hat{f}(s)\hat{f}(t)(|s-t|+|s+t|).
            \end{align}
        Combining (\ref{4.7}) with the first two terms of (\ref{4.3}), we have a term of order $N$,
        \begin{align}
        \label{4.8}
            2N\left(\sum_{\substack{|s|,|t|\leq N-1}} \hat{f}(s)\hat{f}(t) - \sum_{|k|\leq N-1}\hat{f^2}(k)\right),
        \end{align}
        and a term of order constant,
            \begin{align}
        \label{4.9}
                2\left(\sum_{|k|\leq N-1}|k|\hat{f^2}(k) - 
\sum_{\substack{|s|,|t|\leq N-1\\ sgn(s)=sgn(t)}} |s-t|\hat{f}(s)\hat{f}(t)- 
\sum_{\substack{|s|,|t|\leq N-1\\ sgn(s)=sgn(t)}}|s+t|\hat{f}(s)\hat{f}(t)\right).
            \end{align}
        The expression (\ref{4.8}) can be rewritten as
            \begin{align}
            \label{4.10}
                &2N\left(\sum_{\substack{|s|,|t|\leq N-1}} \hat{f}(s)\hat{f}(t) - \sum_{|s+t|\leq N-1}\hat{f}(s)\hat{f}(t)\right)\nonumber\\
                =&2N\sum_{\substack{|s|,|t|\leq N-1\\N\leq |s+t|}} \hat{f}(s)\hat{f}(t) - 
2N\sum_{\substack{|s+t|\leq N-1\\N\leq \max(|s|,|t|) }}\hat{f}(s)\hat{f}(t).
            \end{align}
        Furthermore, (\ref{4.9}) can be rewritten as follows: 
            \begin{align*}
                2\sum_{|s+t|\leq N-1}|s+t|\hat{f}(s)\hat{f}(t) - 
2\sum_{\substack{|s|,|t|\leq N-1\\ sgn(s)=sgn(t)}} |s-t|\hat{f}(s)\hat{f}(t)- 
2\sum_{\substack{|s|,|t|\leq N-1\\ sgn(s)=sgn(t)}}|s+t|\hat{f}(s)\hat{f}(t).\\
            \end{align*}
        We break up the sum into two parts, namely
            \begin{align}
                \label{4.11}
                2\sum_{\substack{|s+t|\leq N-1\\ sgn(s)\neq sgn(t)}}|s+t|\hat{f}(s)\hat{f}(t) - 
2\sum_{\substack{|s|,|t|\leq N-1\\ sgn(s)=sgn(t)}}|s-t|\hat{f}(s)\hat{f}(t)
            \end{align}
       and    
            \begin{align}
                \label{4.12}
                2\sum_{\substack{|s+t|\leq N-1\\ sgn(s)= sgn(t)}}|s+t|\hat{f}(s)\hat{f}(t) - 
2\sum_{\substack{|s|,|t|\leq N-1\\ sgn(s)=sgn(t)}}|s+t|\hat{f}(s)\hat{f}(t)\text{\textcolor{red}{.}}
            \end{align}
        The expression (\ref{4.11}) can be rewritten as 
            \begin{align}
                &2\sum_{\substack{|s-t|\leq N-1\\ sgn(s)= sgn(t)}}|s-t|\hat{f}(s)\hat{f}(t) - 
2\sum_{\substack{|s|,|t|\leq N-1\\ sgn(s)= sgn(t)}}|s-t|\hat{f}(s)\hat{f}(t)\nonumber\\
                &=4\left(\sum_{\substack{|s-t|\leq N-1\\ 1\leq s,t}}|s-t|\hat{f}(s)\hat{f}(t) - 
\sum_{\substack{1\leq s,t \leq N-1}}|s-t|\hat{f}(s)\hat{f}(t)\right)\nonumber\\
                &= 4\left(\sum_{\substack{|s-t|\leq N-1\\ 1\leq s,t}}|s-t|\hat{f}(s)\hat{f}(t) - 
\sum_{\substack{1\leq s,t \leq N-1\\ |s-t|\leq N-1}}|s-t|\hat{f}(s)\hat{f}(t)\right)\nonumber\\
        \label{4.13}
                &= 4\left(\sum_{\substack{|s-t|\leq N-1\\ N\leq \max(s,t) \\1\leq s,t}}|s-t|\hat{f}(s)\hat{f}(t)\right)\text{\textcolor{red}{.}}
            \end{align}
        Rewriting (\ref{4.12}), we have
            \begin{align*}
                4\sum_{\substack{s+t\leq N-1\\1\leq s,t \leq N-1}}(s+t)\hat{f}(s)\hat{f}(t) - 
4\sum_{1\leq s,t\leq N-1}(s+t)\hat{f}(s)\hat{f}(t)
            \end{align*}
            \begin{align}
            \label{4.14}
                = -4\sum_{\substack{1\leq s,t\leq N-1\\ N\leq s+t}}(s+t)\hat{f}(s)\hat{f}(t).
            \end{align}
        Combining  the last term in (\ref{4.3}) with (\ref{4.5}), (\ref{4.10}), (\ref{4.13}), and (\ref{4.14}) gives
            \[
                \var(S_N(f)) = 2\sum_{|s|\leq N-1}|s|^2|\hat{f}(s)|^2
            \]
            \[
                +4N^2\sum_{N\leq  s}|\hat{f}(s)|^2
                +2N\sum_{\substack{|s|,|t|\leq N-1\\N\leq |s+t|}} \hat{f}(s)\hat{f}(t) - 
2N\sum_{\substack{|s+t|\leq N-1\\N\leq \max(|s|,|t|) }}\hat{f}(s)\hat{f}(t)
            \]
            \[  
                +4\sum_{\substack{|s-t|\leq N-1\\ N\leq \max(s,t) \\1\leq s,t}}|s-t|\hat{f}(s)\hat{f}(t)
                -4\sum_{\substack{1\leq s,t\leq N-1\\ N\leq s+t}}(s+t)\hat{f}(s)\hat{f}(t),
            \]
        which can be further rewritten as
            \[
                \var(S_N(f)) = 4\sum_{1\leq s\leq N-1}|s|^2|\hat{f}(s)|^2
            \]
            \begin{align*}
                +4(N^2-N)\sum_{N\leq  s}|\hat{f}(s)|^2
                +4N\sum_{\substack{1\leq s,t \leq N-1\\N\leq s+t}} \hat{f}(s)\hat{f}(t) - 
4N\sum_{\substack{1\leq |s-t|\leq N-1\\N\leq \max(s,t)\\ 1\leq s,t }}\hat{f}(s)\hat{f}(t)
            \end{align*}
            \[  
                +4\sum_{\substack{1\leq |s-t|\leq N-1\\ N\leq \max(s,t) \\1\leq s,t}}|s-t|\hat{f}(s)\hat{f}(t)
                -4\sum_{\substack{1\leq s,t\leq N-1\\ N\leq s+t}}(s+t)\hat{f}(s)\hat{f}(t).
            \]
        Combining like sums gives the desired result. Proposition 4.1 is proven.
        \end{proof}
        
        Now, we turn our attention to the proof of Proposition 4.3. It will follow from Proposition 4.1 and the following technical 
lemma that allows us to control the negligible terms.

        \begin{lemma} Let $f'\in L^2(\mathbb{T}).$ Then, as $N\to\infty$, we have
            \begin{enumerate}
                \item[(i)]
                    \begin{align*}
                        \sum_{\substack{1\leq s,t\leq N\\ s+t\geq N+1}}s|\hat{f}(s)|\cdot|\hat{f}(t)|\to 0,
                    \end{align*}
                \item[(ii)]
                    \begin{align*}
                        (N+1)\sum_{\substack{s-t\leq N\\s\geq N+1\\1\leq t\leq N}}|\hat{f}(s)|\cdot|\hat{f}(t)|\to 0,
                    \end{align*}    
                \item[(iii)]
                    \begin{align*}
                        N\sum_{\substack{|s-t|\leq N-1\\s,t\geq N}}|\hat{f}(s)|\cdot|\hat{f}(t)|\to 0.
                    \end{align*}
            \end{enumerate}
            
        \end{lemma}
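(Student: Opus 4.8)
The plan is to reduce all three estimates to two ingredients. The first is the smallness of the tails $T_M := \sum_{k\ge M} k^2|\hat f(k)|^2$, which tend to $0$ as $M\to\infty$ because $f'\in L^2(\mathbb T)$ forces $V := \sum_{k\ge 1} k^2|\hat f(k)|^2 < \infty$. The second is the dual form of the discrete Hardy inequality: writing $a_k := |\hat f(k)|$ and $A_m := \sum_{t\ge m} a_t$ for the tail sums, one has
\[
\sum_{m\ge 1} A_m^2 \;=\; \sum_{m\ge 1}\Big(\sum_{t\ge m} a_t\Big)^2 \;\le\; C\sum_{t\ge 1} t^2 a_t^2 \;=\; C\,V
\]
for an absolute constant $C$. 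In particular $\sum_{m\ge 1} A_m^2 < \infty$, so this series has vanishing tails and $A_m\to 0$. I would set up this notation and record these two facts first.

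Part (iii) does not even need Hardy. Since $s,t\ge N$, the number of $t$ with $|s-t|\le N-1$ is at most $2N$, so by the arithmetic--geometric mean inequality and symmetry the sum is bounded by $2N^2\sum_{s\ge N} a_s^2$; using $a_s^2\le s^2 a_s^2/N^2$ on the range $s\ge N$ this is at most $2\,T_N\to 0$. Part (ii) is handled by the observation that $s\ge N+1$ gives $(N+1)\,a_s\le s\,a_s$. After bounding the inner sum over $t$ by the tail $A_{s-N}$ and substituting $s=N+u$, the expression is dominated by $\sum_{u\ge 1} c_u A_u$ with $c_u:=(N+u)\,a_{N+u}$; Cauchy--Schwarz then yields $\big(\sum_u c_u^2\big)^{1/2}\big(\sum_u A_u^2\big)^{1/2}$, where $\sum_u c_u^2=\sum_{s>N} s^2 a_s^2=T_{N+1}\to 0$ and $\sum_u A_u^2\le CV$ by Hardy, so the product vanishes. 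The point that makes (ii) clean is that every weighted coefficient $c_u$ carries an index $s>N$, so its $\ell^2$ norm is automatically a vanishing tail.

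I expect part (i) to be the main obstacle, precisely because the weight $s$ now sits on a bulk index: a naive Cauchy--Schwarz that decouples $s$ and $t$ ignores the constraint $s+t\ge N+1$ and produces a bound of order $\sqrt N$, which does not vanish. The fix is to retain the tail structure of the inner sum. Bounding the $t$-sum by $A_{N+1-s}$ gives $\Sigma \le \sum_{s=1}^N s\,a_s\,A_{N+1-s}$, and substituting $m=N+1-s$ turns this into $\sum_{m=1}^N b_m A_m$ with $b_m:=(N+1-m)\,a_{N+1-m}$ and $\sum_m b_m^2=V$. I would then split the range at $m=N/2$. On $m\le N/2$ one has $s=N+1-m\ge N/2$, so $\sum_{m\le N/2} b_m^2=\sum_{N/2\le s\le N}(s\,a_s)^2\le T_{N/2}$, and Cauchy--Schwarz against the Hardy bound $\sum_m A_m^2\le CV$ makes this piece $O\big(T_{N/2}^{1/2}\big)$. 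On $m> N/2$ one instead keeps $\sum_m b_m^2\le V$ and uses that $\sum_{m> N/2} A_m^2$ is a tail of the convergent Hardy series, hence $\to 0$. In both pieces one factor is a vanishing tail, so $\Sigma\to 0$. The delicate point throughout is to ensure that in each sub-sum at least one of the two Cauchy--Schwarz factors is genuinely a tail rather than the full (bounded but non-vanishing) norm; Hardy's inequality is exactly what guarantees $\sum_m A_m^2<\infty$ so that its tails can play this role.
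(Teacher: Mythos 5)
Your proof is correct, and it takes a genuinely different route from the paper's. The paper encodes each sum as a quadratic form $\langle X_N, A_N X_N\rangle$ with $X_N=(s|\hat f(s)|)_s$ and $A_N$ built from the Ces\`aro-type matrix $B_N$; it bounds $\|B_N\|_{op}\le 3$ via the identity $B_NB_N^T=B_N+B_N^T-D$ and then concludes by a soft weak-convergence argument (truncate $X_N$ to its first $L$ coordinates, observe that each fixed coordinate of $A_NX_N$ is an average of tail entries of an $\ell^2$ vector and hence vanishes, and let $\e\to 0$). You instead work directly with the tail sums $A_m=\sum_{t\ge m}|\hat f(t)|$, invoke the dual discrete Hardy inequality $\sum_m A_m^2\le C\sum_t t^2|\hat f(t)|^2$, and arrange each application of Cauchy--Schwarz so that one factor is a genuine tail ($T_{N+1}$, $T_{N/2}$, or $\sum_{m>N/2}A_m^2$). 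The two arguments share the same analytic core --- your Hardy inequality is exactly the statement that the paper's operator $B_N^T$ is uniformly bounded on $\ell^2$ --- but your packaging buys explicit rates of decay (e.g.\ $O(T_{N/2}^{1/2})$ for the hard piece of (i)) where the paper's truncation argument is purely qualitative, and your treatment of (iii) via symmetrization and counting ($2N$ admissible $t$ per $s$, then $a_s^2\le s^2a_s^2/N^2$) is more elementary than the paper's, which introduces the shift $L^{N-1}$ and the band operator $R_N$ and still has to route half of (iii) back through (ii). Your diagnosis of why naive Cauchy--Schwarz fails in (i) (the weight $s$ sits on a bulk index, giving only $O(\sqrt N)$) correctly identifies the one place where the constraint $s+t\ge N+1$ must be retained, and your split at $m=N/2$ handles it cleanly.
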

       
We first quickly prove Proposition 4.3 modulo Lemma 4.4 and then prove Lemma 4.4.
        \begin{pfo}{\textit{Proposition 4.3}}$\\$
             Recall that $\beta=2$ and we require that $\sum_{s=1}^\infty s^2[\hat{f}(s)]^2<\infty$, i.e. 
             $f\in \mathbb{H}^1(\mathbb{T})$. We examine the last four sums on the r.h.s. of the formula for  
             $\var_N(S_N(f))$  in Proposition 4.1. Our goal is to show that these four sums go to zero as $N\to \infty.$
             The analysis of the first two sums is trivial, since
                \[
                   0\leq \sum_{s\geq N}N[\hat{f}(s)]^2\leq  \sum_{s\geq N}N^2[\hat{f}(s)]^2\leq  \sum_{s \geq N}s^2[\hat{f}(s)]^2,
                \]
           which goes to zero under our stated assumptions. The remaining two sums require a little bit more work done in Lemma 4.4. We have
                \[
                    \left|\sum_{\substack{1\leq s,t\leq N-1\\N+1\leq s+t}}((s+t)-N) \hat{f}(s)\hat{f}(t) \right| \leq 2\sum_{\substack{1\leq s,t
                    \leq N-1\\N+1\leq s+t}}(s+t)|\hat{f}(s)|\cdot|\hat{f}(t)|= 
4\sum_{\substack{1\leq s,t\leq N-1\\N+1\leq s+t}}s|\hat{f}(s)|\cdot|\hat{f}(t)|.
                \]
            It follows from Lemma 4.4(i) that the r.h.s. goes to zero as $N\to\infty$. Finally, we observe that
                \[
                    \left|\sum_{\substack{1\leq s,t \\ 1\leq |s-t|\leq N-1\\ N\leq \max(s,t)}}(N-|s-t|)\hat{f}(s)\hat{f}(t)\right| 
                    \leq 2N\sum_{\substack{1\leq s,t \\ 1\leq |s-t|\leq N-1\\ N\leq \max(s,t)}}|\hat{f}(s)|\cdot|\hat{f}(t)|
                \]
                \[
                    = 4N\sum_{\substack{|s-t|\leq N-1\\ s\geq N\\1\leq t \leq N-1}}|\hat{f}(s)|\cdot|\hat{f}(t)| + 
2N\sum_{\substack{|s-t|\leq N-1\\ s,t\geq N}}|\hat{f}(s)|\cdot|\hat{f}(t)|
                \]
            The first term goes to zero by Lemma 4.4 (ii) and the second term goes to zero by Lemma 4.4 (iii). 
This completes the proof of Proposition 4.3 modulo Lemma 4.4. 
        \end{pfo}
        \\\\
        The rest of the section is devoted to the proof of Lemma 4.4.\\\\
            \begin{pfo}{\textit{Lemma 4.4}}$\\$
                Let $x_s=s|\hat{f}(s)|$ for $1\leq s \leq N$ and $X_N=\{x_s\}_{s=1}^N$. By the assumption of Lemma 4.4 
the Euclidean norm of the vector $X_N$ is bounded in $N$. Note that
                    \begin{align}
                    \label{4.15}
                        \sum_{\substack{1\leq s,t\leq N\\ s+t\geq N+1}}s|\hat{f}(s)|\cdot|\hat{f}(t)|
                        =\sum_{t=1}^{N} x_t \cdot \left(\frac{1}{t}\sum_{s=N-t+1}^{N} x_s\right)
                        =\sum_{t=1}^N x_t \cdot \left(\frac{1}{t}\sum_{s=1}^t (U_N\*X_N)_s\right)
                        =\langle X_N,A_N X_N\rangle,
                    \end{align}
            with $A_N=B_N\*U_N$, where $U_N$ is a unitary permutation 
matrix given by $(U_N)_{s,t}= \mathds{1}_{(t=N-s+1)}$ and $B_N$ is a lower triangular matrix given by $(B_N)_{s,t}= (1/s)\mathds{1}_{(t\leq s)}$. 
In particular,
            \[
                B_N = 
                        \renewcommand\arraystretch{1.25}
                        \begin{pmatrix}
                            1           & 0           & 0           & 0  & \dots  & 0 \\
                            \frac{1}{2} & \frac{1}{2} &     0       &  0 & \dots  & 0 \\
                            \frac{1}{3} & \frac{1}{3} & \frac{1}{3} &  0 & \dots  & 0 \\
                            \vdots & \vdots & \vdots  & \vdots      & \ddots      & \vdots \\
                            \frac{1}{N} & \frac{1}{N} &\frac{1}{N}  & \frac{1}{N} &\dots  & \frac{1}{N}
                        \end{pmatrix}
            \]
            Our goal is to show that the expression in 
(\ref{4.15}) vanishes in the limit of large $N.$  First we show that the operator norm of the matrix $A_N$ is bounded in $N$. 
Indeed, $B_N(B_N)^T= B_N+(B_N)^T-D$, where $D_{s,t}=(1/s)\mathds{1}_{(s=t)}$. This gives us the bound 
$||B_N||_{op}^2\leq 2||B_N||_{op}+1$, so $||A_N||_{op}=||B_N||_{op}\leq 3$. The fact that $A_N$ weakly converges to $0$ finishes the proof of the Lemma. 
Indeed,
            \begin{align*}
                \langle X_N, A_NX_N \rangle &= \left\langle X_N-\sum_{s=1}^L \langle e_s, X_N\rangle e_s ,\hspace{3mm}A_NX_N \right\rangle + 
\left\langle\sum_{s=1}^L \langle e_s, X_N\rangle e_s ,\hspace{3mm}A_NX_N\right\rangle\\
                &=\left\langle X_N-\sum_{s=1}^L \langle e_s, X_N\rangle e_s ,\hspace{3mm}A_NX_N \right\rangle + 
\sum_{s=1}^L\left\langle \langle e_s, X_N\rangle e_s ,\hspace{3mm}A_NX_N\right\rangle\\
                &=\left\langle X_N-\sum_{s=1}^L \langle e_s, X_N\rangle e_s ,\hspace{3mm}A_NX_N \right\rangle + 
\sum_{s=1}^L x_s(A_NX_N)_s\\
                &=\left\langle X_N-\sum_{s=1}^L \langle e_s, X_N\rangle e_s ,\hspace{3mm}A_NX_N \right\rangle + 
\sum_{s=1}^L x_s\frac{x_{N-s+1}+\dots +x_{N}}{s}.\\
            \end{align*}
            Let $\epsilon>0$. Then we can choose $L$ sufficiently large such that, 
              \begin{align*}
                  |\langle X_N, A_NX_N \rangle| 
                  &\leq \left|\left\langle X_N-\sum_{s=1}^L \langle e_s, X\rangle e_s ,\hspace{3mm}A_NX_N \right\rangle\right| + 
\left|\sum_{s=1}^L x_s\frac{x_{N-s+1}+\dots +x_{N}}{s}\right|\\
                  &\leq \epsilon + \left|\sum_{s=1}^L x_s\frac{x_{N-s+1}+\dots +x_{N}}{s}\right|\\
                  &\to \epsilon
              \end{align*}
            Since this holds for arbitrary $\epsilon$, we can conclude that $\langle X_N, A_N\*X_N\rangle\to 0$. This completes the proof of Lemma 4.4(i).\\
            
To prove part (ii), let $B_N$ be defined as in the proof of part $(i)$. Similarly, 
let $x_s=s|\hat{f}(s)|$ and $X_N=\{x_s\}_{s=1}^{2N}$. Now, $X_N$ is a $2N$-dimensional vector bounded, uniformly with respect to $N,$  
in  Euclidean norm. Observe that
                \begin{align*}
                    N\sum_{\substack{s-t\leq N\\s\geq N+1\\1\leq t\leq N}}|\hat{f}(s)|\cdot|\hat{f}(t)|
                    &\leq \sum_{t=1}^N x_t\left(\frac{1}{t}\sum_{s=N+1}^{N+t}x_s\right)\\
                    &= \langle C_N X_N, M_N X_N\rangle,
                \end{align*}
            where 
                 \[
                    C_N =\renewcommand\arraystretch{1.25}
                        \begin{pmatrix}
                           I_N & 0\\
                           0   & 0
                        \end{pmatrix}
                        \hspace{5mm}\text{and}\hspace{5mm}
                    M_N = 
                        \renewcommand\arraystretch{1.25}
                        \begin{pmatrix}
                           B_N & 0\\
                           0 & 0
                        \end{pmatrix}
                        \begin{pmatrix}
                           0 & I_N\\
                           I_N & 0
                        \end{pmatrix}.
                 \]
            Using the same arguments as in the proof of (i), we can see that $||M_N||_{op}\leq 3$.  Clearly, $||C_N||_{op}=1.$
The rest of the proof is similar to that of (i). Indeed, for any $\epsilon>0$, we can choose $L$ sufficiently large such that
                 \begin{align*}
                     \left|\langle C_N X_N, M_N X_N\rangle\right| 
                     &\leq \left|\left\langle C_N X_N - \sum_{k=1}^L\langle e_k, C_N X_N\rangle e_k, M_N X_N\right\rangle\right| + 
\left|\left\langle \sum_{k=1}^L\langle e_k, C_N X_N\rangle e_k, M_N X_N\right\rangle\right|\\
                     &\leq \epsilon + \left|\sum_{k=1}^Lx_k(M_N X_N)_k\right|\\
                     &= \epsilon + \left|\sum_{k=1}^Lx_k\frac{x_{N+1}+\dots + x_{N+k}}{k}\right|\\
                     &\to \epsilon
                 \end{align*}
            In the above inequalities, we assume $N$ is large enough such that we can choose $L\leq N$. This completes the proof of (ii).\\
            
            To prove (iii), we start by splitting up the sum into two parts, namely
                \begin{align}
\label{myt}
                 N\sum_{\substack{t-N+1
                 \leq s\leq N+t-1\\t\geq N}}|\hat{f}(s)|\cdot|\hat{f}(t)| - N\sum_{\substack{t-N+1\leq s \leq N-1\\t\geq N}}|\hat{f}(s)|\cdot|\hat{f}(t)|.
               \end{align}
                The second sum in (\ref{myt}) goes to zero by (ii). Let $x_s=s|\hat{f}(s)|$ for $s\geq 1$. 
Then $X=\{x_s\}_{s=1}^\infty\in \ell^2(\mathbb{N})$. We can bound the first sum in (\ref{myt}) as follows: 
                    \begin{align*}
                        N\sum_{\substack{N-t+1\leq s\leq N+t-1\\t\geq N}}|\hat{f}(s)|\cdot|\hat{f}(t)| 
                        &\leq \sum_{t=N}^\infty x_t\left(\frac{1}{t}\sum_{s=t-N+1}^{N+t-1}x_s\right)\\
                        &= \langle L^{N-1} X, R_N X \rangle,
                    \end{align*}
                where $L,R_N$ are bounded linear operators on $\ell^2(\mathbb{N})$. In particular, $L,R_N$ are 
infinite dimensional matrices such that $L_{s,t}=\mathds{1}_{t=s+1}$ and  $(R_N)_{s,t}=\frac{1}{N+s-1}\mathds{1}_{(s\leq t \leq s+2N-2)}$.
                \[
                    L= \begin{pmatrix}
                        0 & 1 & 0 & 0 &\cdots \\
                        0&  0 & 1 & 0 &\cdots \\
                        \vdots&  \vdots&\ddots& \ddots&\ddots\\
                    \end{pmatrix}
                \hspace{5mm}
                    R_N=\begin{pmatrix}
                        \frac{1}{N} & \frac{1}{N} & \frac{1}{N} & \cdots & \frac{1}{N} & 0 & 0& 0&\cdots \\
                        0&  \frac{1}{N+1}& \frac{1}{N+1}& \frac{1}{N+1} &\cdots  & \frac{1}{N+1} & 0 & 0& \cdots \\
                        0&  0   & \frac{1}{N+2}& \frac{1}{N+2} &\frac{1}{N+2}  & \cdots & \frac{1}{N+2} & 0 &\cdots \\
                        \vdots& \vdots& \ddots &\ddots&\ddots &\ddots&\ddots&\ddots&\ddots
                    \end{pmatrix}
                \]
            Clearly $||L||_{op}=1$ and
                \[
                    ||R_N||_{op}\leq ||R_N||_2 = \sqrt{(2N-1)\sum_{k=N}^\infty \frac{1}{k^2}}\leq \sqrt{\frac{2N-1}{N-1}}\leq \sqrt{3}
                \]
            for $N\geq 2$. Now, by the Cauchy-Schwarz inequality,
                \begin{align*}
                    |\langle L^{N-1} X, R_N X \rangle|^2&\leq ||L^{N-1}X||^2_2\cdot||R_N||^2_{op} \cdot || X||^2_2 \\
                    &\leq 3\left(\sum_{k=N}^\infty |k|^2 |\hat{f}(k)|^2\right)\left(\sum_{k=1}^\infty |k|^2 |\hat{f}(k)|^2\right)\to 0 .
                \end{align*}
    
            This completes the proof of Lemma 4.4.
            \end{pfo}

\section{Joint Cumulants of Linear Statistics for $\beta=2$}

The goal of this section is to study joint cumulants of CUE linear statistics, i.e. 
of random variables $T_{N}(f_j):=\sum_{i=1}^{N}f_{j}(\theta_{i}),\ \ j\geq 1,$
where $\{\theta_i: 1\leq i \leq N\}$ are the eigenvalues of an $N\times N$ CUE matrix. We refer the reader for definition of joint cumulants 
to \cite{malyshev}. Recall that for a family of random variables $\{X_{\alpha \in \mathcal{A}} \},$

\begin{align}
\label{cummulants}
\kappa_{i_1,\ldots,i_n}:=\kappa(X_{i_1}, \ldots, X_{i_n})= \sum_{\pi} (|\pi|-1)!(-1)^{|\pi|-1} \prod_{B\in \pi} \E \left( \prod_{i \in B} X_i \right),
\end{align}
where the sum is over all partitions $\pi$ of $\{i_1, \ldots, i_n\},$ $B$ runs through the list of all blocks of the partition $\pi,$ and $|\pi|$ is 
the number of blocks in the partition. Joint cumulants are symmetric, i.e.
\[ 
\kappa(X_1, \ldots, X_n)=\kappa(X_{\sigma(1)}, \ldots, X_{\sigma(n)}), \ \ \sigma \in S_n,
\]
and have the multilinearity property.
\[
\kappa(c_1\*Y_1 +c_2\*Y_2, X_2, \ldots, X_n)=c_1\*\kappa(Y_1, X_2, \ldots, X_n) +c_2\*\kappa(Y_2, X_2, \ldots, X_n).
\]
The joint moments are expressed in terms of joint cumulants as
\begin{align}
\label{moments}
\E(X_1, \ldots, X_n):=\E \prod_{1\leq i\leq n} X_i= \sum_{\pi} \prod_{B\in \pi} \kappa(X_i: i \in B).
\end{align}
The joint cumulant of two random variables is the covariance. 
Finally, we note that as the joint moments are expressed in terms of the 
partial derivatives at the origin of the Laplace transform (exponential moment) 
$g(\lambda_1, \ldots, \lambda_n)=\E e^{\lambda_1\*X_1+\ldots \lambda_n\*X_N},$ the joint cumulants can be expressed in terms of the partial derivatives 
at the origin of $\log(g(\lambda_1, \ldots, \lambda_n)).$

Denote by $t_{N,k}$ the trace of the $k$-th power of a CUE matrix, i.e.
\begin{align}
\label{kkk}
t_{N,k}:=\sum_{j=1}^N e^{i\*k\*\theta_j}, \ \ k=0,\pm 1, \pm 2, \ldots.
\end{align}

In the next lemma we study joint cumulants of the traces of powers of a CUE matrix
\begin{align}
\label{kkkk}
\kappa_n^{(N)}(k_1, \ldots, k_n):=\kappa(t_{N,k_1}, \ldots, t_{N,k_n}).
\end{align}

The following result follows from the formula (2.8) of \cite{sasha} and the fact that $\kappa_n^{(N)}(k_1, \ldots, k_n)$ is a symmetric 
function:

\begin{lemma}
Let $n>1.$ Then
\begin{align}
& & \kappa_n^{(N)}(k_1, \ldots, k_n)=\sum_{m=1}^n \frac{(-1)^{m-1}}{m} \*\sum_{\substack{(n_1, \ldots, n_m):
\\ n_1+\ldots n_m=n, \ n_1, \ldots n_m\geq 1}} 
\frac{1}{n_1!\cdots n_m!} \* \label{semi} \\
& & \sum_{\sigma \in S_n} \* \#\left\{u: 0\leq u\leq N-1, 0\leq u+ \sum_{i=1}^{n_1} k_{\sigma(i)} \leq N-1, \ldots, 
0\leq u+ \sum_{i=1}^{n_1+\ldots+n_{m-1}} k_{\sigma(i)} \leq N-1 \right\} \nonumber
\end{align}
for $k_1+\ldots +k_n=0$ and equals zero otherwise.

For $n=1$ one has $\kappa_1^{(N)}(k)=N$ for $k=0$ and $\kappa_1^{(N)}(k)=0$ otherwise.
\end{lemma}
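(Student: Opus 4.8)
The plan is to read off $\kappa_n^{(N)}(k_1,\ldots,k_n)$ from the determinantal structure of the CUE and then present the result in its symmetric form, following the two ingredients named above. I would first pass to the Fourier basis $\phi_u(x)=\frac{1}{\sqrt{2\pi}}e^{iux}$, $u\in\Z$, in which $Q_N$ is the orthogonal projection onto $\mathrm{span}\{\phi_0,\ldots,\phi_{N-1}\}$ and the operator $E_k$ of multiplication by $e^{ikx}$---the operator attached to $t_{N,k}=\sum_j e^{ik\theta_j}$---acts as the index shift $E_k\phi_u=\phi_{u+k}$. The algebraic fact driving everything is that these shifts commute and compose additively, $E_aE_b=E_{a+b}$, so that any product of them collapses to a single shift by the sum of the exponents.

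Next I would invoke the Fredholm-determinant generating function for a determinantal process,
\[
\E\exp\Big(\sum_{j=1}^n\lambda_j\,t_{N,k_j}\Big)=\det\big(1+(e^{M}-1)\,Q_N\big),\qquad M:=\sum_{j=1}^n\lambda_j E_{k_j},
\]
take logarithms via $\log\det=\Tr\log$, and expand $\Tr\log(1+(e^M-1)Q_N)=\sum_{m\ge1}\frac{(-1)^{m-1}}{m}\Tr\big(((e^M-1)Q_N)^m\big)$; this is the content of formula (2.8) of \cite{sasha}. The joint cumulant is the coefficient of $\lambda_1\cdots\lambda_n$. Extracting it, the $m$-th term forces a composition $n_1+\cdots+n_m=n$ with each $n_j\ge1$ coming from the factors $e^M-1=\sum_{p\ge1}M^p/p!$, which produces the weights $\tfrac{(-1)^{m-1}}{m}$ and $\tfrac{1}{n_1!\cdots n_m!}$, while multilinearity distributes the $n$ indices over the $m$ slots and is recorded by a permutation $\sigma\in S_n$. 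Collapsing each block of shifts with $E_aE_b=E_{a+b}$ turns the $j$-th slot into a single shift $E_{A_j}$ with $A_j=\sum_{i\in\text{block }j}k_{\sigma(i)}$, so the surviving object is the trace $\Tr\big(Q_N E_{A_1}Q_N\cdots Q_N E_{A_m}\big)$. Since cumulants are symmetric in $k_1,\ldots,k_n$, the full sum over $S_n$ in the statement is exactly the symmetric rewriting of this expansion (equivalently, $\tfrac{1}{n_1!\cdots n_m!}\sum_{\sigma\in S_n}$ tabulates ordered set partitions of $\{1,\ldots,n\}$ into blocks of sizes $n_1,\ldots,n_m$).

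Finally I would evaluate the trace: $\Tr\big(Q_NE_{A_1}Q_N\cdots Q_NE_{A_m}\big)$ counts precisely those starting indices $u$ whose partial orbit $u,\,u+A_1,\,\ldots,\,u+A_1+\cdots+A_{m-1}$ never leaves $\{0,\ldots,N-1\}$, the final return $u+A_1+\cdots+A_m=u$ being automatic because $A_1+\cdots+A_m=k_1+\cdots+k_n=0$; since $A_1+\cdots+A_j=\sum_{i=1}^{n_1+\cdots+n_j}k_{\sigma(i)}$, this is exactly the cardinality appearing in the statement. If instead $\sum_j k_j\neq0$ every such trace is empty---equivalently, the rotation $\theta\mapsto\theta+\alpha$ multiplies the cumulant by $e^{i(\sum_j k_j)\alpha}$ and forces it to vanish---which gives the stated dichotomy, and the case $n=1$ is immediate from $\E t_{N,k}=N\,\delta_{k,0}$. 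I expect the only real difficulty to be bookkeeping: keeping the constants $\tfrac{(-1)^{m-1}}{m}$, $\tfrac{1}{n_1!\cdots n_m!}$ and the sum over all of $S_n$ mutually consistent when converting the (a priori order-dependent) trace-log expansion into the manifestly symmetric form asserted by the lemma.
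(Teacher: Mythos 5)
Your argument is correct, but it reaches the formula by a genuinely different route than the paper. The paper's Lemma 5.1 is obtained by citing formula (2.8) of the reference \cite{sasha} together with the symmetry of joint cumulants, and the self-contained justification given in Appendix 2 (Proposition 9.3) proceeds through joint \emph{moments}: one expands the correlation functions $\rho_r=\det(Q_N(\theta_i,\theta_j))$ over permutations, decomposes each permutation into cycles, reorganizes the resulting sum over partitions, and then identifies the cumulants by comparing with the moment--cumulant relation (\ref{moments}); the factor $\frac{(-1)^{m-1}}{m}$ arises there from the cycle sign together with averaging over the $m$ cyclic rotations of an ordered collection of blocks. You instead go directly to the cumulant generating function via the Heine/Andr\'eief identity $\E\exp\bigl(\sum_j\lambda_j t_{N,k_j}\bigr)=\det\bigl(1+(e^{M}-1)Q_N\bigr)$ and the expansion of $\log\det=\Tr\log$, so the coefficients $\frac{(-1)^{m-1}}{m}$ and $\frac{1}{n_1!\cdots n_m!}$ fall out of the Taylor series of $\log(1+x)$ and $e^x-1$, and no moment-to-cumulant inversion is needed; the additivity $E_aE_b=E_{a+b}$ of the shifts and the evaluation of $\Tr\bigl(E_{A_1}Q_N\cdots E_{A_m}Q_N\bigr)$ as the counting function, including its vanishing when $\sum_ik_i\neq0$, are handled correctly, as is the identification of $\frac{1}{n_1!\cdots n_m!}\sum_{\sigma\in S_n}$ with the sum over ordered set partitions. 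What your approach buys is a shorter and more transparent derivation of the combinatorial weights (cumulants are by definition the coefficients of the logarithm of the Laplace transform); what it costs is the need to justify the Fredholm-determinant identity and the formal validity of the $\Tr\log$ expansion (harmless here, since everything is a polynomial identity in each $\lambda_j$ to the order needed), whereas the paper's route uses nothing beyond the determinantal correlation functions and the definition (\ref{cummulants}).
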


Denote
\begin{align}
\label{I}
& I_N(n_1, \ldots, n_m; k_1,\ldots, k_n)
:=\\
& \#\left\{u: 0\leq u\leq N-1, 0\leq u+ \sum_{i=1}^{n_1} k_i\leq N-1, \ldots, 
0\leq u+ \sum_{i=1}^{n_1+\ldots+n_{m-1}} k_i \leq N-1 \right\}, \nonumber
\end{align}
where $n_1+\ldots+n_m=n $  and $n_1, \ldots, n_m\geq 1.$
Clearly,
\begin{align}
\label{II} & I_N(n_1, \ldots, n_m; k_1, \ldots, k_n)= \\
& \max\left(0, N-\max\left(0, \sum_i^{n_1} k_i, \sum_i^{n_1+n_2} k_i, \ldots, \sum_i^{n_1+\ldots+n_{m-1}} k_i\right)-
\max\left(0, \sum_i^{n_1} (-k_i), \ldots, \sum_i^{n_1+\ldots+n_{m-1}} (-k_i)\right)\right).\nonumber
\end{align}

Thus, 
\begin{align}
\label{III}
0\leq I_N(n_1, \ldots, n_m; k_1, \ldots, k_n) \leq N.
\end{align}
Moreover, if $\sum_1^n |k_i| \leq N, $ then
\begin{align}
\label{IIII} & I_N(n_1, \ldots, n_m; k_1, \ldots, k_n)= \\
& N-\max\left(0, \sum_i^{n_1} k_i, \sum_i^{n_1+n_2} k_i, \ldots, \sum_i^{n_1+\ldots+n_{m-1}} k_i\right)-
\max\left(0, \sum_i^{n_1} (-k_i), \ldots, \sum_i^{n_1+\ldots+n_{m-1}} (-k_i)\right).\nonumber
\end{align}
Next result follows from the above Lemma 5,1 (\ref{I}-\ref{IIII}), and the combinatorial Lemma 2 from \cite{sasha}
(for the convenience of the reader, we formulate 
the combinatorial Lemma 2 from \cite{sasha} in Appendix 2 as Lemma 9.1.)

\begin{lemma}

(i) $|\kappa_n^{(N)}(k_1, \ldots, k_n)|\leq const_n \*N,$
where $const_n$ is some universal constant that depends only on $n.$

(ii) Let $n\geq 1, $ and $\sum_1^n k_i\neq 0.$ Then $\kappa_n^{(N)}(k_1, \ldots, k_n)=0.$

(iii)  Let $\sum_1^n k_i=0, \ \ \sum_1^n |k_i| \leq N,$ and $n>2.$ Then $\kappa_n^{(N)}(k_1, \ldots, k_n)=0.$

(iv) Let $n=2$ and $k_1=-k_2.$ Then $\kappa_2^{(N)}(k_1, k_2)=\kappa_2^{(N)}(k_1, -k_1)=\min(N, |k_1|).$
\end{lemma}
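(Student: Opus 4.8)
The plan is to deduce all four parts directly from the explicit cumulant formula \eqref{semi}, using the elementary properties \eqref{II}--\eqref{IIII} of the counting quantity $I_N$, and reserving the combinatorial Lemma~9.1 (Lemma~2 of \cite{sasha}) for the single non-trivial cancellation in part (iii).

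I would dispose of (ii) and (i) first. For (ii) with $n>1$, the preceding lemma already states that $\kappa_n^{(N)}$ vanishes unless $k_1+\dots+k_n=0$, while for $n=1$ the value $\kappa_1^{(N)}(k)=N$ for $k=0$ and $0$ otherwise gives the claim at once. For (i), I would note that \eqref{semi} writes $\kappa_n^{(N)}$ as a finite combination --- with coefficients $\tfrac{(-1)^{m-1}}{m}\tfrac{1}{n_1!\cdots n_m!}$ and $\sigma$ ranging over $S_n$ --- of integers $I_N$, each of which lies in $[0,N]$ by \eqref{III}. Since the number of summands (over $m$, over compositions of $n$, and over $S_n$) depends only on $n$, bounding each $I_N$ by $N$ gives $|\kappa_n^{(N)}|\le \mathrm{const}_n\cdot N$ with $\mathrm{const}_n=\sum_{m=1}^n\tfrac1m\sum_{n_1+\dots+n_m=n}\tfrac{n!}{n_1!\cdots n_m!}$.

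For (iv) I would simply compute \eqref{semi} at $n=2$, $k_2=-k_1$. Only two compositions arise: $m=1$ (so $n_1=2$) contributes $\tfrac12\sum_{\sigma\in S_2}I_N(2;\cdot)=N$, because the single constraint $0\le u\le N-1$ is counted $N$ times; and $m=2$ (so $n_1=n_2=1$) contributes $-\tfrac12\sum_{\sigma\in S_2}I_N(1,1;k_{\sigma(1)},k_{\sigma(2)})$, where by \eqref{II} $I_N(1,1;k_{\sigma(1)},k_{\sigma(2)})=\max(0,N-|k_1|)$ for either $\sigma$. This yields $\kappa_2^{(N)}(k_1,-k_1)=N-\max(0,N-|k_1|)=\min(N,|k_1|)$, as claimed.

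The substance of the lemma is (iii), and here I expect the main obstacle. Under the hypotheses $\sum_1^n k_i=0$ and $\sum_1^n|k_i|\le N$ I may substitute the exact value \eqref{IIII} into \eqref{semi}, splitting $\kappa_n^{(N)}$ into an ``$N$-part'' and a ``maximum-part''. In the $N$-part the $\sigma$-sum gives a factor $n!$, and using $n!\sum_{n_1+\dots+n_m=n}\tfrac{1}{n_1!\cdots n_m!}=m!\,S(n,m)$ (counting surjections onto $m$ labels, $S(n,m)$ a Stirling number of the second kind) the coefficient of $N$ collapses to $\sum_{m=1}^n(-1)^{m-1}(m-1)!\,S(n,m)$. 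The exponential generating function identity $\sum_{m\ge1}\tfrac{(-1)^{m-1}}{m}(e^x-1)^m=\log\!\big(1+(e^x-1)\big)=x$ identifies this with $n!\,[x^n]\,x$, which is $0$ for every $n\ge2$; hence the $N$-part vanishes. What remains is the maximum-part: the alternating sum, over $m$, over compositions, and over $\sigma\in S_n$, of the two nested maxima of the partial sums $\sum_{i\le n_1+\dots+n_j}k_{\sigma(i)}$. Showing that this vanishes for every $n>2$ is the crux, and is exactly the cancellation packaged in the combinatorial Lemma~9.1; the delicate point is to match the nested-maximum structure coming from \eqref{IIII} to the hypotheses of that lemma, term by term over the compositions. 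Once both parts are shown to vanish, (iii) follows.
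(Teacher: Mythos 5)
Your proof is correct and follows essentially the same route as the paper, whose own argument for this lemma consists of citing the explicit cumulant formula \eqref{semi}, the elementary bounds \eqref{I}--\eqref{IIII}, and the combinatorial Lemma 9.1. Two remarks. First, the ``delicate point'' you flag in (iii) is not actually delicate: after substituting \eqref{IIII}, the maximum-part is \emph{by definition} equal to $-G(k_1,\dots,k_n)-G(-k_1,\dots,-k_n)$, since $G$ is built from exactly the same coefficients $\tfrac{(-1)^{m-1}}{m}\tfrac{1}{n_1!\cdots n_m!}$, the same sum over $\sigma\in S_n$ and over compositions, and the same nested maximum of partial sums; as $\sum_i(-k_i)=0$ as well, Lemma 9.1 applies to both terms and kills them for $n>2$ with no further matching needed. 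Your explicit verification that the coefficient of $N$ vanishes (via $m!\,S(n,m)$ and $\log(1+(e^x-1))=x$) is correct and is a detail the paper leaves implicit; an alternative one-line argument is that evaluating \eqref{semi} at $k_1=\dots=k_n=0$ gives $N$ times that coefficient sum on one side and the $n$-th cumulant of the constant $t_{N,0}=N$ (which is $0$ for $n\geq 2$) on the other. Second, your direct computation of (iv) from \eqref{II} is actually preferable to the paper's stated route through \eqref{IIII} and Lemma 9.1, because \eqref{IIII} requires $\sum_i|k_i|=2|k_1|\leq N$ and hence does not by itself cover $N/2<|k_1|$, whereas your evaluation $N-\max(0,N-|k_1|)=\min(N,|k_1|)$ holds for all $k_1$.
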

\begin{proof}

(i) follows from Lemma 5,1 and (\ref{I}-\ref{IIII}).

(ii) follows form Lemma 5.1.

(iii) and (iv) follow from Lemma 5.1, (\ref{IIII}) and the combinatorial lemma from \cite{sasha} (see Lemma 9.1 in Appendix 2).

\end{proof}

\section{Proof of Theorem 2.4}
This section is devoted to the proof of Theorem 2.4. We use Lemma 5.2 and the Lindeberg-Feller condition when $\beta=2$ and Jiang-Matsumoto moment
estimates \cite{jm} for arbitrary $\beta.$

\begin{pfo}{\textit{Theorem 2.4.}}$\\$
        When $N$ is sufficiently large, the support of $f(L_N\cdot)$ is contained in the interval $[-\pi,\pi]$. In particular, 
$f(L_N\cdot)$ has a Fourier Series given by
            \[
                f(L_N\theta)=\sum_{k\in\mathbb{Z}}\frac{1}{\sqrt{2\pi}L_N} \hat{f}\left(\frac{k}{L_N}\right)e^{ik\theta}
            \]
        where $\theta\in[-\pi,\pi)$  and 
            \[
                \hat{f}(\xi)=\frac{1}{\sqrt{2\pi}}\int_\mathbb{R}f(x)e^{-i\xi x}dx
            \]
        is the Fourier transform of $f.$  
        Extend $f(L_N\cdot)$ $2\*\pi$-periodically to the whole real line. 
Then
            \[
                S_N(f(L_N\cdot)) = \sum_{1\leq j\neq k\leq N} f(L_N(\theta_j-\theta_k)_c) = 
\sum_{k\in \mathbb{Z}} \frac{1}{\sqrt{2\pi}L_N}\hat{f}\left(\frac{k}{L_N}\right)\left(\left|\sum_{m=1}^N e^{ik\theta_m}\right|^2-N\right).
            \]

Consider first the case $\beta=2, $ so $\{\theta_j\}_{j=1}^N$ are distributed according to $CUE(N)$. We have
            \begin{align}
            \label{6.1}
                S_N(f(L_N\cdot))-\E(S_N(f(L_N\cdot))) = 2\sum_{k\geq 1} 
\frac{1}{\sqrt{2\pi}L_N}\hat{f}\left(\frac{k}{L_N}\right)\left(\left|\sum_{m=1}^N e^{ik\theta_m}\right|^2-\min(k, N)\right),
            \end{align}
where we used the fact that
$ \E \left|\sum_{m=1}^N e^{ik\theta_m}\right|^2= \min(k, N)$
and the Fourier series expansion for $f(L_N\cdot)$ given above.
One can directly study the variance of   $S_N(f(L_N\cdot)) $ by using the result of Proposition 4.1 and  
inserting the Fourier coefficients for the mesoscopic case into the variance formula. 
The first (leading) term in the variance formula then becomes
            \[
                L_N\left[\frac{2}{\pi L_N}\sum_{1\leq k\leq N-1}\left(\frac{k}{L_N}\right)^2\left(\hat{f}\left(\frac{k}{L_N}\right)\right)^2\right].
            \]
        The term in the square brackets is a Riemann sum which converges to $||f'||^2_2/\pi.$ Thus, the  variance of  $S_N(f(L_N\cdot))$ is of order
$L_N.$  We then normalize (\ref{6.1}) 
by $\sqrt{L_N/(2\pi)}$ and break it up into two pieces:  
            \begin{align}
            \label{6.2}
                \frac{2}{\sqrt{L_N}}\sum_{k=1}^{\sqrt{N L_N}} \frac{k}{L_N}&\hat{f}\left(\frac{k}{L_N}\right)(\varphi_k^{(N)}-1)\\ 
                &+  \frac{2}{\sqrt{L_N}}\sum_{k=\sqrt{NL_N} +1}^\infty \frac{1}{L_N}\hat{f}\left(\frac{k}{L_N}\right)
\left(\left|\sum_{m=1}^N e^{ik\theta_m}\right|^2-\min(k, N)\right),\nonumber
            \end{align}
        where 
            \[
                \varphi_k^{(N)}: =  \frac{1}{k}\left|\sum_{m=1}^N e^{ik\theta_m}\right|^2=\frac{1}{k}\*|t_{N,k}|^2.
            \]
We show in Appendix 3 that  the variance of the second sum in (\ref{6.2}) converges to zero by applying Proposition 4.1 and 
analogous arguments from the proof of Proposition 4.3.
Therefore, it is enough to study the asymptotic distribution of the first sum:
            \begin{align}
                \Sigma_N=\frac{2}{\sqrt{L_N}}\sum_{k=1}^{\sqrt{NL_N}} \frac{k}{L_N}\hat{f}\left(\frac{k}{L_N}\right)(\varphi_k^{(N)}- \E\varphi_k^{(N)})=
\frac{2}{\sqrt{L_N}}\sum_{k=1}^{\sqrt{NL_N}} \frac{k}{L_N}\hat{f}\left(\frac{k}{L_N}\right)(\varphi_k^{(N)}-1).
            \end{align}
Consider the sequence of random variables $ \left(\varphi_1^{(N)}, \varphi_2^{(N)}, \ldots, \varphi_k^{(N)},\ldots \right )_N.$ 
As $N\to \infty,$ this sequence converges, 
in finite-dimensional distributions, to a sequence of i.i.d. exponential random variables 
$ \left(\varphi_1, \varphi_2, \ldots, \varphi_k,\ldots \right ).$ 
Moreover, Lemma 5.2, specifically (ii)-(iii), implies that for any fixed $n$ and sufficiently large $N$ (depending on $n$)
all joint moments up to order $n$ of random variables $\{\varphi_k^{(N)}\}_{k=1}^{\sqrt{NL_N}}$ coincide with the corresponding 
joint moments of i.i.d. exponential random variables $\{\varphi_k\}_{k=1}^{\sqrt{NL_N}}.$ Therefore, it is enough to study the asymptotic distribution of

\begin{align}
\label{ofis}
                \Sigma_N=\frac{2}{\sqrt{L_N}}\sum_{k=1}^{\sqrt{NL_N}} \frac{k}{L_N}\hat{f}\left(\frac{k}{L_N}\right)(\varphi_k-1),
\end{align}
where we recall that $\{\varphi_k\}$ are i.i.d. exponential random variables.

This can done by routine computation. For example, one can explicitly compute the exponential moment of (\ref{ofis}) and study its asymptotics in the limit
of large $N$ showing that the exponential moment converges to that of a centered Gaussian random variable with the prescribed variance.
Below, for completeness, we show that the sequence of random variables in the above sum satisfy the Lindeberg-Feller condition 
\cite{dur}.  
         
         Let
            \[
                c_{N,k}=\frac{2k}{L_N^{(3/2)}}\hat{f}\left(\frac{k}{L_N}\right),\hspace{5mm}X_{N,k}= c_{N,k}(\varphi_k-1).
            \]
        Then $\E(X_k)=0$, $\var(X_k)=c_{N,k}^2$, and
            \begin{align*}
                \Sigma_N=  \sum_{k=1}^{\sqrt{NL_N}}X_{N,k},
            \end{align*}
        Denote by $s_N^2$ the variance of $\Sigma_N$, i.e.
            \[
                s_N^2=\sum_{k=1}^{\sqrt{NL_N}}c_{N,k}^2.
            \]
        To see that the sequence of random variables $(X_k)$ satisfy the Lindeberg-Feller condition, we check that, given $\epsilon>0$,
            \[
                \frac{1}{s_N^2} \sum_{k=1}^{\sqrt{NL_N}} \E(X_k^2 1_{|X_k|>\epsilon\sigma_N})\to 0.
            \]
        If $|c_{N,k}|=0$ for some $k$, then $\E(X_k^2 1_{|X_k|>\epsilon\sigma_N})=0$, so, without loss of generality, we will assume that 
$|c_{N,k}|>0$ for all $k$ and $N$. By direct computation, we see that 
            \begin{align*}
                \E(X_k^2 1_{|X_k|>\epsilon\sigma_N})
                &=c_{N,k}^2\E([\varphi_k^2-2\varphi_k+1] 1_{|\varphi_k-1|>\epsilon\sigma_N/|c_{N,k}|})\\
                &= c_{N,k}^2\int_{x>1+(\epsilon\sigma_N/|c_{N,k}|)}(x^2-2x+1)e^{-x}dx.
            \end{align*}      
        Now $s_N=O(1)$ and, since $f'$ is continuous and bounded, we have $1/|c_{N,K}|\geq C\sqrt{L_N}$ for some positive constant C that is 
independent of $k$ and $N$. It follows that, for large enough $N$, we can write
            \begin{align*}
                \E(X_k^2 1_{|X_k|>\epsilon\sigma_N})&\leq  c_{N,k}^2\int_{x>\gamma_N}(x^2-2x+1)e^{-x}dx\\
                &=   c_{N,k}^2 e^{-\gamma_N}(\gamma_N^2+1),
            \end{align*}
        where $\gamma_N= C\*\sqrt{L_N}\*[\epsilon\*\sigma_N]$. Clearly $\gamma_N= O(\sqrt{L_N})$ and $e^{-\gamma_N}(\gamma_N^2+1)$ goes to zero 
independent of $k$. This immediately implies 
            \[
                \frac{1}{\sigma_N^2} \sum_{k=1}^{\sqrt{NL_N}} \E(X_k^2 1_{|X_k|>\epsilon\sigma_N})\leq 
\frac{1}{\sigma_N^2}\sum_{k=1}^{\sqrt{NL_N}}c_{N,k}^2 \cdot o_N(1)= o_N(1),
            \]
        so the Lindeberg-Feller condition is satisfied and we can conclude that 
            \begin{align}
                    \frac{\Sigma_N}{s_N} \xrightarrow[]{\hspace{1mm}\mathcal{D}\hspace{2mm}} \mathcal{N}\left(0 , 1\right),
            \end{align}
        where $s_N^2$ is a Riemann sum that converges to
            \[
                2\int_\mathbb{R}[x\hat{f}(x)]^2\hspace{1mm}dx
            \]
        as $N\to\infty$. This completes the proof of Theorem 2.4 when $\beta=2$.
                    \end{pfo}

The proof in the case $\beta\neq 2$ relies on the results by Jiang and Matsumoto \cite{jm} that, in particular, state that for any finitely many 
positive integers $k_1, k_2, \ldots k_n, \ \ k_i <<N, \ 1\leq i \leq n,$
one has 
\begin{align}
\label{JM}
\E \prod_{i=1}^n \varphi_{k_i}^{(N)}= \left(\E \prod_{i=1}^n \varphi_{k_i} \right)\*\left(1+O\left(\frac{k_1+\ldots k_n}{N}\right)\right).
\end{align}

Namely, we proceed as follows. As in the case $\beta=2$ we write

\begin{align*}
&S_N(f(L_N\cdot))-\E(S_N(f(L_N\cdot))) = 2\sum_{k\geq 1} 
\frac{1}{\sqrt{2\pi}L_N}\hat{f}\left(\frac{k}{L_N}\right)
\left(\left|\sum_{m=1}^N e^{ik\theta_m}\right|^2- \E \left|\sum_{m=1}^N e^{ik\theta_m}\right|^2 \right)=\\
&\frac{2}{\sqrt{L_N}}\sum_{k=1}^{\infty} \frac{k}{L_N}\hat{f}\left(\frac{k}{L_N}\right)(\varphi_k^{(N)}- \E\varphi_k^{(N)}).
\end{align*}
We then split the last sum into three subsums, namely 

\begin{align*}
& \frac{2}{\sqrt{L_N}}\sum_{k=1}^{L_N^2} \frac{k}{L_N}\hat{f}\left(\frac{k}{L_N}\right)(\varphi_k^{(N)}- \E\varphi_k^{(N)})+
\frac{2}{\sqrt{L_N}}\sum_{k=L_N^2}^{N/10} \frac{k}{L_N}\hat{f}\left(\frac{k}{L_N}\right)(\varphi_k^{(N)}- \E\varphi_k^{(N)})\\
&+\frac{2}{\sqrt{L_N}}\sum_{k>N/10}^{\infty} \frac{k}{L_N}\hat{f}\left(\frac{k}{L_N}\right)(\varphi_k^{(N)}- \E\varphi_k^{(N)})
\end{align*}
and deal with each subsum separately. The variance of the second sum goes to zero as $N\to \infty$ since the Fourier transform of $f$ decays sufficiently  
fast for $f \in C^{\infty}_c(\R)$ and $\E |\varphi_k^{(N)}- \E\varphi_k^{(N)}|^2$ is bounded for $k\leq N/10.$  Here, the bound on the variance of 
$\varphi_k^{(N)}$ follows from (\ref{JM}).

The variance of the third subsum goes to 
zero as well. Indeed, we bound $\E |\varphi_k^{(N)}- \E\varphi_k^{(N)}|^2$ from above by $N^4/k^2$ for $k>N/10$ and again 
use a fast decay of $\hat{f}\left(\frac{k}{L_N}\right)$ to finish the argument. 

Now we turn our attention to the first subsum
\begin{align}
\label{firstsubsum}
\frac{2}{\sqrt{L_N}}\sum_{k=1}^{L_N^2} \frac{k}{L_N}\hat{f}\left(\frac{k}{L_N}\right)(\varphi_k^{(N)}- \E\varphi_k^{(N)}).
\end{align}
It follows from (\ref{JM}) that for any positive integer $l$ the $l$-th moment of (\ref{firstsubsum}) equals to the $l$-th moment
\begin{align}
\label{refref}
\frac{2}{\sqrt{L_N}}\sum_{k=1}^{\sqrt{L_N\*N}} \frac{k}{L_N}\hat{f}\left(\frac{k}{L_N}\right)(\varphi_k-1)
\end{align}
up to a vanishing error term of order $O(L_N^{l/2 +2}\*N^{-1}).$
Again, the exponential moment of (\ref{refref}) converges to that of a Gaussian random variable.
Theorem 2.4 is proven.

\section{Proof of Theorem 2.5}
The section is devoted to the proof of Theorem 2.5. The proof uses the method of moments and is combinatorial in nature.
Recall that
  \[
                S_N(f) = \sum_{1\leq j\neq k\leq N} f(N\*(\theta_j-\theta_k)_c) = 
\sum_{k\in \mathbb{Z}} \frac{1}{\sqrt{2\pi}\*N}\hat{f}(k/N)\left(\left|\sum_{m=1}^N e^{ik\theta_m}\right|^2-N\right),
            \]
where $\{\theta_1, \ldots, \theta_N\}$ are distributed according to the CUE statistics ($\beta=2$.)
To simplify the notations, we will write $S_N(f)$ for $S_N(f(N\cdot))$ for the rest of this section.
One has
\[\E S_N(f)= \sum_{k\in \mathbb{Z}} \frac{1}{\sqrt{2\pi}}\hat{f}(k/N) \*\min\left(\frac{|k|}{N}, 1\right) + \hat{f}(0)\*N^2  -f(0)\*N
\]
and
\begin{align}
            \label{7.1}
                S_N(f)-\E S_N(f) &= 2\sum_{k\geq 1} 
\frac{1}{\sqrt{2\pi}N}\hat{f}(k/N)\left(\left|\sum_{m=1}^N e^{ik\theta_m}\right|^2-\min(k, N)\right)\\
& = 2\sum_{k\geq 1} 
\frac{1}{\sqrt{2\pi}}\min \left(\frac{k}{N}, 1\right) \hat{f}(k/N)\left(\varphi_k^{(N)}-1\right), \nonumber
            \end{align}
where $\varphi_k^{(N)}=\frac{1}{\min(k,N)}\*\left|\sum_{m=1}^N e^{ik\theta_m}\right|^2.$
For $l\geq 1 $ one has

\begin{align}
            \label{7.2}
\E (S_N(f)-\E S_N(f))^l=(2/\pi)^{l/2}\*N^{-l}\*\sum_{k_1\geq 1}\ldots \sum_{k_l\geq 1} 
\E \prod_{i=1}^l  
\hat{f}(k_i/N) \* \left(t_{N, k_i}\*t_{N, -k_i}-\E t_{N, k_i}\*t_{N, -k_i}\right),
\end{align}
where we recall that the traces of powers of a CUE matrix $t_{N,k}$ are defined in (\ref{kkk}).
The mathematical expectation on the r.h.s. of (\ref{7.2}) can be written in terms of joint cumulants (\ref{kkkk}) 
using Lemma 9.2 from Appendix 2. 
Namely, the lemma states that for centered random variables
$ X_1, \ldots, X_{2n}$ with finite moments,
\begin{align}
\label{cenmoments1}
\E \prod_{1\leq i\leq l} (X_{2i-1}\*X_{2i}-\E X_{2i-1}\*X_{2i})   = \sum^*_{\pi} \prod_{B\in \pi} \kappa(X_i: i \in B),
\end{align}
where the sum on the r.h.s. of (\ref{cenmoments}) is over all partitions $\pi$ of $\{1, \ldots, 2l\}$ that do not contain singletons and 
two-element subsets of the 
form $\{2i-1, 2i\}, \ i=1, \ldots,l.$ 
In our analysis, it will be useful to identify the set $\{1, \ldots, 2l\}$ with 
the set $\{k_1, -k_1, k_2, -k_2, \ldots,k_l, -k_l\}.$

We are going to use Lemma 5.2 to evaluate the asymptotics of (\ref{7.2}).
Let us first consider the cases $l=2$ and $l=3.$ 

We start with the already established case $l=2.$ It follows from (\ref{7.2}) and (\ref{cenmoments1}) that
\begin{align}
            \label{7.3}
\E (S_N(f)-\E S_N(f))^2& =(2/\pi)\*N^{-2}\*\sum_{k_1\geq 1}\sum_{k_2\geq 1} 
\hat{f}(k_1/N) \*\hat{f}(k_2/N) \* \kappa_4^{(N)}(k_1, -k_1, k_2, -k_2) +\\
&(2/\pi)\*N^{-2}\*\sum_{k_1\geq 1}\sum_{k_2\geq 1} 
\hat{f}(k_1/N) \*\hat{f}(k_2/N) \*\kappa_2^{(N)}(k_1, -k_2) \*\kappa_2^{(N)}(-k_1, k_2) \nonumber \\
&(2/\pi)\*N^{-2}\*\sum_{k_1\geq 1}\sum_{k_2\geq 1} 
\hat{f}(k_1/N) \*\hat{f}(k_2/N) \*\kappa_2^{(N)}(k_1, k_2) \*\kappa_2^{(N)}(-k_1, -k_2). \nonumber
\end{align}
Applying Lemma 5.2 part (ii), we conclude that the third sum on the r.h.s. of (\ref{7.3}) vanishes and the only non-zero terms in the second sum 
correspond to $k_1=k_2$ in which case $\kappa_2^{(N)}(k_1, -k_2)=\kappa_2^{(N)}(-k_1, k_2)=\min(N, k_1).$ Therefore, up to a factor $N,$ 
the second sum is just a Riemann sum of the integral 
\[\frac{1}{\pi}\*\int_{\mathbb{R}} |\hat{f}(t)|^2 \*\min(|t|,1)^2\* dt.\]
Now we turn our attention to the first sum. The terms appearing in 
$\kappa_4^{(N)}(k_1, -k_1, k_2, -k_2)$ have been studied in detail in Section 4. It follows that the 
first sum is also proportional to $N,$ and the coefficient in front of $N$
is recognized as a Riemann sum of 
\[ -\frac{1}{\pi}\* \int_{|s-t|\leq 1, |s|\vee|t|\geq 1} \hat{f}(t)\*\hat{f}(s)\*
(1-|s-t|)\* ds\*dt
-\frac{1}{\pi}\* \int_{0\leq s,t\leq 1, s+t>1} \hat{f}(s)\*\hat{f}(t)\*(s+t-1) \*ds\*dt.\]
Combining these two  results together,
we obtain the variance asymptotics (\ref{vvvv}) for
the normalized random variable $(S_N(f)-\E S_N(f))\*N^{-1/2}.$

Consider now $l=3.$ Again, (\ref{7.2}), (\ref{cenmoments1}), and Lemma 5.2 give us

\begin{align}
            \label{7.33}
&\E (S_N(f)-\E S_N(f))^3 =\\
&(2/\pi)^{3/2}\*N^{-3}\*\sum_{k_1\geq 1}\sum_{k_2\geq 1} \sum_{k_3\geq 1}
\hat{f}(k_1/N) \*\hat{f}(k_2/N) \*\hat{f}(k_3/N)\* \kappa_6^{(N)}(k_1, -k_1, k_2, -k_2, k_3, -k_3) +\nonumber \\
&3\*(2/\pi)^{3/2}\*N^{-3}\*\sum_{k_1\geq 1}\sum_{k_2\geq 1} \sum_{k_3\geq 1}
\hat{f}(k_1/N) \*\hat{f}(k_2/N) \*\hat{f}(k_3/N)\*\kappa_3^{(N)}(k_1, k_2, -k_3) \*\kappa_3^{(N)}(-k_1, -k_2, k_3) \nonumber \\
&2\*(2/\pi)^{3/2}\*N^{-3}\*\sum_{k_1\geq 1}\sum_{k_2\geq 1}  \sum_{k_3\geq 1}
\hat{f}(k_1/N) \*\hat{f}(k_2/N)  \*\hat{f}(k_3/N)\*\kappa_2^{(N)}(k_1, -k_2) \*\kappa_2^{(N)}(k_2, -k_3) \* \kappa_2^{(N)}(k_3, -k_1). \nonumber 
\end{align}

It follows from Lemma 5.2 (i) that the first sum on the r.h.s. of (\ref{7.33}) is of order $N.$
It further follows from Lemma 5.2 (i) and (ii) that the second sum is restricted to $k_1+k_2=k_3$ and is also of order $N.$
Finally, the third sum is restricted to $k_1=k_2=k_3$ and is again of order $N.$  Thus, the third moment of the normalized random variable
$(S_N(f)-\E S_N(f))\*N^{-1/2}$ goes to zero in the limit $N\to \infty.$

Now consider the case $l>3.$ Below we restrict our attention to the even case $l=2\*n.$ The odd case 
$l=2\*n+1$ can be treated in a similar way. The starting point is again 
formula (\ref{7.2}). Applying (\ref{cenmoments1}) to the mathematical expectation $\E \prod_{j=1}^{2n}\left(t_{N, k_j}\*t_{N, -k_j}-
\E t_{N, k_j}\*t_{N, -k_j}\right)$ and writing the expectation as the sum of products of joint cumulants,
we split the sum into subsums labeled by the partitions $\pi$ of $\{1,2,\ldots, 4\*n\}$ with no atoms and no two-point subsets of the form
$\{2\*i-1, 2\*i\}, \ \ i=1,\ldots, 2n.$ We will denote a subsum in (\ref{7.2}) corresponding to a partition $\pi$ by $\Sigma_{\pi}.$
We make the following definition.

\begin{definition}
We call a partition of the set $\{1,2,\ldots, 4\*n\}$ (which can be also identified with the set $\{k_1, -k_1, k_2, -k_2, \ldots, k_{2\*n}, -k_{2\*n}\}$)
optimal if $\pi$ consists 
only of paired two-point blocks $\{2\*i-1, 2\*j\}, \ \{2\*i, 2\*j-1\}, \ \ 1\leq i<j\leq 2\*n,$
(so that if $\{2\*i-1, 2\*j\} \in \pi$ for some pair $(i,j)$ then also $\{2\*i, 2\*j-1\} \in \pi$) and/or four-point 
blocks $\{2\*i-1, 2\*i, 2\*j-1, 2\*j\}, \ \ 1\leq i<j\leq 2\*n.$
If $\pi$ is not optimal, it will be called suboptimal.
\end{definition}

In other words, the only blocks of an optimal partition $\pi$ are of the form $\{k_i, -k_j\}, \{-k_i, k_j\}$ 
(if one of such two-element sets appears in $\pi$ then the other must appear as well) or $\{k_i, -k_i, k_j, -k_j\}, \ i\neq j.$

If $\pi$ is optimal, then it induces a partition of
the set $\{1,\ldots 2\*n\}$ into pairs $\{i,j\}.$ Moreover, the subsum $\Sigma_{\pi}$ then factorizes as a product of
$n$ two-dimensional sums. Each sum corresponds to a pair $\{i,j\}$ and is proportional to $N,$ with the computations being identical to the ones 
discussed in the $l=2$ case above.  In particular, the coefficient in front of $N$ is given by a Riemann sum of the integral
$\frac{1}{\pi}\* \int_{\mathbb{R}} |\hat{f}(t)|^2 \*\min(|t|,1)^2\* dt$ in the case of paired two-point blocks 
$\{2\*i-1, 2\*j\}, \ \{2\*i, 2\*j-1\}, \ \ 1\leq i<j\leq 2\*n,$ and is equal to a Riemann sum of
\[ -\frac{1}{\pi}\* \int_{|s-t|\leq 1, |s|\vee|t|\geq 1} \hat{f}(t)\*\hat{f}(s)\*
(1-|s-t|)\* ds\*dt
-\frac{1}{\pi}\* \int_{0\leq s,t\leq 1, s+t>1} \hat{f}(s)\*\hat{f}(t)\*(s+t-1) \*ds\*dt\]
in the case of a four-point 
block $\{2\*i-1, 2\*i, 2\*j-1, 2\*j\}, \ \ 1\leq i<j\leq 2\*n.$

The main combinatorial ingredient of the proof of Theorem 2.5 is the following lemma that shows that suboptimal partitions give vanishing contributions
to the moments of the normalized random variable $\frac{S_N(f)-\E S_N(f)}{\sqrt{\var S_N(f)}}.$

\begin{lemma}
Let $\pi$ be a suboptimal partition of $\{1,2,\ldots, 2\*l\}=\{k_1, -k_1, \ldots, k_l, -k_l\}.$
Then the corresponding subsum 
$\Sigma_{\pi}$ is much smaller than $N^{l/2}$ in the limit $N \to \infty.$
In other words, 
\[ \Sigma_{\pi} \*N^{-l/2} \to 0
\]
for any suboptimal $\pi.$
\end{lemma}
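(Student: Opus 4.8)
The plan is to reduce the estimate to a graph-theoretic power count and then settle the boundary case by a direct vanishing argument. I would write the subsum coming from the partition $\pi$ as
\[
\Sigma_\pi = (2/\pi)^{l/2}\, N^{-l} \sum_{k_1,\dots,k_l\ge 1} \Big(\prod_{i=1}^l \hat f(k_i/N)\Big)\, \prod_{B\in\pi} \kappa_{|B|}^{(N)}(B),
\]
where $B$ ranges over the blocks, each a subset of the labels $\{k_1,-k_1,\dots,k_l,-k_l\}$, and for $1\le j\le l$ I call $j$ the \emph{variable-index} shared by the two labels $k_j$ and $-k_j$. First I would dispose of the degenerate blocks: if the labels in some block cannot sum to zero for any choice of the $k_i\ge 1$ (for instance if they all carry the same sign), then Lemma 5.2(ii) makes $\kappa_{|B|}^{(N)}(B)$ vanish identically, so $\Sigma_\pi=0$. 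Hence I may assume every block is balanced, apply the bound $|\kappa_{|B|}^{(N)}(B)|\le C N$ from Lemma 5.2(i), and retain only the balance conditions, obtaining
\[
|\Sigma_\pi| \le C\, N^{-l}\, N^{|\pi|} \sum_{\substack{k_1,\dots,k_l\ge 1\\ \text{all block sums}=0}} \prod_{i=1}^l |\hat f(k_i/N)|.
\]

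Next I would carry out the power count. The balance conditions form a linear system $M\vec k=0$, where the row of $M$ indexed by a block $B$ has $j$-th entry $\mathds 1_{k_j\in B}-\mathds 1_{-k_j\in B}\in\{-1,0,1\}$. Introduce the \emph{block graph} $G$ whose vertices are the blocks of $\pi$, with one edge for each variable-index $j$ whose two labels lie in different blocks (indices with both labels in a single block give zero columns of $M$). Then $M$ is exactly the signed incidence matrix of $G$, so $\operatorname{rank}(M)=|\pi|-c(G)$, where $c(G)$ is the number of connected components of $G$. Since $\hat f$ is Schwartz, each free summation direction contributes $\sum_{k\ge1}|\hat f(k/N)|=O(N)$ while the $\operatorname{rank}(M)$ determined variables stay bounded; solving for them shows the constrained sum is $\le C N^{\,l-\operatorname{rank}(M)}$. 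Combining the two displays gives the key bound
\[
|\Sigma_\pi| \le C\, N^{-l}\, N^{|\pi|}\, N^{\,l-\operatorname{rank}(M)} = C\, N^{\,|\pi|-\operatorname{rank}(M)} = C\, N^{\,c(G)}.
\]

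It then remains to prove the combinatorial inequality $c(G)\le l/2$ with a sharp equality description. Every variable-index lies in a unique component of $G$, so if $\ell_K$ and $v_K$ denote the numbers of variable-indices and of blocks in a component $K$, then $\sum_K\ell_K=l$, and since each block has at least two labels, $2\ell_K\ge 2v_K$, i.e. $\ell_K\ge v_K\ge 1$. The case $\ell_K=1$ forces $v_K=1$ with single block $\{k_j,-k_j\}$, which is excluded by hypothesis on $\pi$; hence $\ell_K\ge 2$ for every component and $c(G)=\sum_K 1\le \tfrac12\sum_K\ell_K=l/2$.

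Finally I would analyze equality. It forces $\ell_K=2$ for all $K$, so each component is built from the four labels of two variable-indices $i\ne j$, arranged either as one four-point block $\{k_i,-k_i,k_j,-k_j\}$ or as two two-point blocks; the latter cannot be $\{k_i,-k_i\}$ or $\{k_j,-k_j\}$, leaving only $\{k_i,-k_j\},\{-k_i,k_j\}$ or $\{k_i,k_j\},\{-k_i,-k_j\}$. The first two possibilities are exactly the optimal blocks of Definition 7.1, while in the last one $\kappa_2^{(N)}(k_i,k_j)=0$ for all $k_i,k_j\ge 1$ by Lemma 5.2(ii), so $\Sigma_\pi=0$. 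Thus a suboptimal $\pi$ with $c(G)=l/2$ must contain such a vanishing component and has $\Sigma_\pi=0$, whereas a suboptimal $\pi$ with $c(G)<l/2$ satisfies $|\Sigma_\pi|\le C N^{c(G)}=o(N^{l/2})$ because $c(G)$ is an integer strictly below $l/2$; in either case $\Sigma_\pi N^{-l/2}\to 0$. The main obstacle is the middle step: identifying $\operatorname{rank}(M)$ with $|\pi|-c(G)$ through the incidence matrix, together with the delicate boundary $c(G)=l/2$, where the crude power count alone fails to separate optimal from suboptimal partitions and one must fall back on the exact vanishing in Lemma 5.2(ii).
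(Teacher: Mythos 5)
Your argument is correct, but it takes a genuinely different route from the paper's. The paper proves the lemma by induction on $l$: it successively factors off optimal two- and four-point blocks and paired three-point blocks (reusing the $l=2$ and $l=3$ computations), eliminates stray two- and three-point blocks by comparing $\Sigma_{\pi}$ with a modified partition $\pi''$ that is of higher order in $N$ ``by power counting,'' and only at the very end performs a block count on the remaining partitions, splitting into the case of a block of cardinality at least five (at most $l/2-1$ blocks, each cumulant $O(N)$) and the case of all blocks of cardinality four (where suboptimality forces a linear dependence among the $k_i$ and costs a factor of $N$). You instead establish a single uniform bound $|\Sigma_{\pi}|\le C\,N^{c(G)}$ for every admissible partition, by combining the $O(N)$ cumulant bound of Lemma 5.2(i) with the observation that the vanishing conditions of Lemma 5.2(ii) form the signed incidence system of the block graph $G$, whose rank is $|\pi|-c(G)$; the lemma then reduces to the combinatorial inequality $c(G)\le l/2$, with the boundary case $c(G)=l/2$ resolved exactly (each component is either optimal or carries an identically vanishing factor $\kappa_2^{(N)}(k_i,k_j)$ with $k_i,k_j\ge 1$). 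This buys a non-inductive, self-contained proof that replaces the paper's repeated informal power-counting comparisons with one clean rank computation and makes transparent why the optimal partitions are exactly the extremal ones; the paper's factorization scheme, in exchange, keeps the already-computed $l=2,3$ subsums explicitly visible at each inductive step. One phrase to tighten: the $\operatorname{rank}(M)$ determined variables need not ``stay bounded''---what you actually use is that $\hat f$ is bounded, so their factors are $O(1)$ and each solution of the constraints is determined by the $l-\operatorname{rank}(M)$ free variables, each of which contributes $\sum_{k\ge 1}|\hat f(k/N)|=O(N)$; with that rewording the estimate $\le C N^{\,l-\operatorname{rank}(M)}$ is airtight.
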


The result of Theorem 2.5 then immediately follows from Lemma 7.2 and $l=2$ (variance) computations since they imply 
that the moments of $\frac{S_N(f)-\E S_N(f)}{\sqrt{\var S_N(f)}}$ converge
in the limit $N\to \infty$ to the moments of the standard Gaussian distribution. Indeed, combining all optimal subsums $\Sigma_{\pi},$ 
we conclude that the $2\*n$-th moment of
$\frac{S_N(f)-\E S_N(f)}{\sqrt{\var S_N(f)}}$ converges to $(2n-1)!!$ in the limit $N\to \infty.$

To prove Lemma 7.2, we recall the results of Lemma 5.2 about joint cumulants of traces of powers of a CUE  random matrix. The parts (i) and (ii) of 
Lemma 5.2 are of particular importance in our analysis. 

Let $\pi$ be a partition of the set $\{1,\ldots 4\*n \}$ that
has no singletons and no two-point subsets of the form
$\{2\*i-1, 2\*i\}, \ \ i=1,\ldots, 2n$ (as required by Lemma 9.2) and is not optimal. We have to show that $\Sigma_{\pi}\*N^{-n} \to 0.$
We proceed by induction in $n.$

First, without loss of generality, we can assume that $\pi$ does not contain paired two-point blocks 
$\{2\*i-1, 2\*j\}, \ \{2\*i, 2\*j-1\} $ and four-point 
blocks $\{2\*i-1, 2\*i, 2\*j-1, 2\*j\}.$ Indeed, if it does contain one of those, the subsum $\Sigma_{\pi}$ factorizes into the sum corresponding to
variables $k_i$ and $k_j$ and the sum corresponding to the remaining variables.
The first sum is proportional to $N.$ The second sum corresponds to a partition $\pi'$ of $4\*n-4$ element set, 
where $\pi'$ is obtained from $\pi$ by removing the above-mentioned block(s) 
corresponding to the $(i,j)$ pair (i.e. removing variables $k_i, -k_i$ and $k_j, -k_j$). 
Applying the induction assumption to $\Sigma_{\pi'}$ finishes the argument.

By the same token, we may assume that $\pi$ does not contain paired three-point blocks 
corresponding to variables $\{k_i, k_j, -k_p\}, \ \ \{-k_i, -k_j, k_p\}, \ \ k_i+k_j=k_p.$ 
If such paired three-point blocks belong to $\pi$ for some 
triple $(i,j,p),$ then the subsum 
$\Sigma_{\pi}$ again factorizes, and the 
sum corresponding to $\{k_i, k_j, -k_p\}, \ \ \{-k_i, -k_j, k_p\}, \ \ k_i+k_j=k_p$ is proportional to $N$ 
as was shown in the $l=3$ case computations above. 
Considering the partition $\pi'$ of a $4\*n-6$ element set obtained from $\pi$ by removing the paired three-point 
blocks $\{k_i, k_j, -k_p\}, \ \ \{-k_i, -k_j, k_p\}$ and applying the induction argument
to $\Sigma_{\pi'}$ finishes the argument.

In addition, we may also assume that $\pi$ does not contain a pair (two-element block) corresponding to variables 
$\{k_i, -k_j\}, \ i\neq j.$ If $\pi$ contains such a two-element block then
$k_i=k_j$ since otherwise $\kappa_2^{(N)}(k_i, -k_j)$ vanishes. Without loss of generality, we may assume $i=1$ and $j=2$. Consider the blocks containing
variables $-k_1$ and $k_2$ correspondingly, namely $\{-k_1, \epsilon_{i_1}\*k_{i_1}, \ldots, \epsilon_{i_m}\*k_{i_m}\}$ and 
$\{k_2, \epsilon_{j_1}\* k_{j_1}, \ldots \epsilon_{j_r}\* k_{j_r}\},$ where each $\epsilon=\pm 1.$
Then, instead of the original partition, consider a modified one denoted by $\pi''.$  The new partition $\pi''$ 
contains the blocks $\{k_1, -k_2\}, \ \{-k_1, k_2\}, 
\ \{\epsilon_{i_1}\*k_{i_1}, \ldots, \epsilon_{i_m}\*k_{i_m}, \epsilon_{j_1}\* k_{j_1}, \ldots \epsilon_{j_r}\*k_{j_r}\},$ and 
all the remaining blocks of $\pi.$ In other words, we replace three blocks $\{k_1, -k_2\}, \ 
\{-k_1, \epsilon_{i_1}\*k_{i_1}, \ldots, \epsilon_{i_m}\*k_{i_m}\}, \ \{k_2, \epsilon_{j_1}\* k_{j_1}, \ldots \epsilon_{j_r}\* k_{j_r}\}$ by three blocks
$\{k_1, -k_2\}, \ \{-k_1, k_2\}, 
\ \{\epsilon_{i_1}\*k_{i_1}, \ldots, \epsilon_{i_m}\*k_{i_m}, \epsilon_{j_1}\* k_{j_1}, \ldots \epsilon_{j_r}\*k_{j_r}\}.$
By power counting,  the subsum $\Sigma_{\pi''}$ corresponding to the 
the modified partition is of higher order in $N.$ 

One special case requires a separate treatment here. Indeed, when $\pi$ contains the blocks $\{k_1, -k_2\}, \ 
\{-k_1, k_j\},$ and $\{k_2, -k_j\},$ the modified partition $\pi''$ would contain the two-point block $\{k_j, -k_j\}$ forbidden by Lemma 9.2. 
Thus, the induction assumption does not apply. However, in such a case
$\Sigma_{\pi}$ clearly factorizes. The three-dimensional sum corresponding to variables $k_1, k_2, k_3$ 
has been studied earlier in the $l=3$ case. It has been
shown  to be proportional to $N.$ One then applies the induction assumption to the remaining $(2n-3)$-dimensional sum (and a corresponding partition of a 
$4n-6$ element set.)

Finally, we can assume that a partition $\pi$ does not contain a three-point subset $\{k_i, k_j, -k_m\}.$ Indeed, suppose $\pi$ contains, say,
$\{k_1, k_2, -k_3\}.$  Consider blocks, containing the variables $-k_1, -k_2$ and $k_3, $ namely
$\{-k_1, \epsilon_{i_1}\*k_{i_1}, \ldots, \epsilon_{i_m}\*k_{i_m}\},  \  \{-k_2, \epsilon_{j_1}\* k_{j_1}, \ldots \epsilon_{j_r}\* k_{j_r}\},$ and \\
$\{k_3, \epsilon_{g_1}\* k_{g_1}, \ldots \epsilon_{g_s}\* k_{g_s}\},$ correspondingly, where each $\epsilon=\pm 1.$ Compare $\pi$ with a modified partition
$\pi''$ that contains blocks $\{k_1, k_2, -k_3\}, \ \{-k_1, -k_2, k_3\}, \ \{\epsilon_{i_1}\*k_{i_1}, \ldots, \epsilon_{i_m}\*k_{i_m},
\epsilon_{j_1}\* k_{j_1}, \ldots \epsilon_{j_r}\* k_{j_r}, \epsilon_{g_1}\* k_{g_1}, \ldots \epsilon_{g_s}\* k_{g_s}\},$ and all the remaining blocks of 
$\pi.$ Again, by power counting, the subsum $\Sigma_{\pi''}$ corresponding to the the modified partition is of higher order in $N.$ 

Now, we are ready to finish the proof of the lemma. It remains to consider the case of a suboptimal partition $\pi$ of $\{1,2,\ldots 2\*l\}$
such that all blocks of $\pi$ consist of at least four elements. If $\pi$ contains a subset of cardinality $5$ or higher, then 
the number of blocks of the partition is not bigger than $l/2-1.$ Each cumulant in (\ref{cenmoments1}) is $O(N)$ by Lemma 5.2 (i).
Thus, the subsum $\Sigma_{\pi}$ of (\ref{7.2}) is bounded from above by 
\[
C\* N^{-l}\* N^{l/2-1}\* \sum_{k_1,\ldots k_l\geq 1} \prod_{i=1}^l |\hat{f}(k_i/N)|=O(N^{l/2-1}),
\]
where $C$ is a constant independent of $N.$
If all blocks of a partition $\pi$ have cardinality $4,$ then the number of blocks is $l/2.$ However, since $\pi$ is suboptimal, 
at least one of the blocks is not of the form $\{k_i, -k_i,k_j, -k_j\}.$
It follows then from Lemma 5.2 (ii) that the variables $k_1, k_2, \ldots, k_l$ are not linearly independent, and the subsum $\Sigma_{\pi}$ is bounded 
from above by

\[
C\* N^{-l}\* N^{l/2}\* \sum^*_{k_1,\ldots k_l\geq 1} \prod_{i=1}^l |\hat{f}(k_i/N)|,
\]
where the sum in the above formula is over linearly dependent variables and is $O(N^{l-1}).$ This immediately implies that
\[ \Sigma_{\pi}=O(N^{l/2-1}).\]
The lemma is proven. This finishes the proof of Theorem 2.5.
Below, Appendices 1,2,and 3 contain some standard auxiliary results.

\section{Appendix 1}

        Here we finish the proof of Theorem 2.1.
        Let $\varphi_m$ be i.i.d $\exp(1)$ random variables and define:
        \begin{align}
        \label{8.1}
            \mathcal{T}_k&=\frac{4}{\beta}\sum_{m=1}^k\hat{f}(m)m(\varphi_m-1)\hspace{5mm}\text{and}\hspace{5mm}
            \mathcal{T}_{\infty}=\frac{4}{\beta}\sum_{m=1}^{\infty}\hat{f}(m)m(\varphi_m-1).
        \end{align}
        We wish to show that $S_N(f)$ (see (\ref{pairs}))
converges in distribution to $\mathcal{T}_{\infty}$ by verifying convergence in the L\'evy metric. 
In other words, we check that the following pair of inequalities hold for arbitrary $\delta>0$ and sufficiently large $N$:
        \begin{align}
        \label{8.2}
            \Pr\of{S_N(f)-\E S_N(f)\leq x}\leq \Pr\of{\mathcal{T}_{\infty}-\E \mathcal{T}_{\infty}\leq x+\delta}+\delta
        \end{align}   
        \begin{align}
        \label{8.3}
            \Pr\of{\mathcal{T}_{\infty}-\E \mathcal{T}_{\infty}\leq x-\delta}-\delta \leq \Pr\of{S_N(f)-\E S_N(f)\leq x}.
        \end{align}
        Let us first consider the case $\beta=2$. To verify (\ref{8.2}) we apply a trivial probability bound and Chebyshev's inequality to get
        \begin{align}
            \Pr\of{S_N(f)-\E S_N(f)\leq x}&\leq \Pr\of{S_N(f_k)-\E S_N(f_k)\leq x+\frac{\delta}{3}}\nonumber\\
            &\hspace{15mm}+\Pr\of{|S_N(f-f_k)-\E S_N(f-f_k)|>\frac{\delta}{3}}\nonumber\\
        \label{8.4}
            &\leq \Pr\of{S_N(f_k)-\E S_N(f_k)\leq x+\frac{\delta}{3}}+\frac{9\var(S_N(f-f_k))}{\delta^2}.
        \end{align}
        It follows immediately from Proposition 4.3 that, for  sufficiently large $k$ and $N$ we can bound the variance of the tail, $S_N(f-f_k)$, 
by an arbitrarily small quantity: 
            \begin{align*}
                \var(S_N(f-f_k))=\sum_{m=k+1}^{\infty}|\hat{f}(m)|^2|m|^2+o(1)= o(1).
            \end{align*}
        Now choose $K_0$ (uniformly in $N$) and $N_0$ (uniformly in $k$) large enough so that if $k\geq K_0$ and $ N\geq N_0$ the following inequality 
is satisfied:
        \begin{align}
        \label{8.5}
            \frac{9\var(S_N(f-f_k))}{\delta^2}=\frac{9}{\delta^2}o(1)+\frac{9}{\delta^2}o(1)\leq \frac{\delta}{3}.
        \end{align}
        Since $\mathcal{T}_k\xrightarrow{\hspace{1mm}\mathcal{D}\hspace{1mm}}\mathcal{T}_{\infty}$, they must converge in the L\'evy metric. 
We can thus choose $K_1$  such that, for all $k\geq K_1$,
            \begin{align}
                \label{8.6}
                \Pr\of{\mathcal{T}_{\infty}-\E \mathcal{T}_{\infty}\leq x-\frac{\delta}{3}}-\frac{\delta}{3}\leq 
\Pr\of{\mathcal{T}_{k}-\E \mathcal{T}_{k}\leq x}\leq \Pr\of{\mathcal{T}_{\infty}-\E \mathcal{T}_{\infty}\leq x+\frac{\delta}{3}}+\frac{\delta}{3}.
            \end{align}
        Similarly, by \cite{johansson1}, there is an $N_1$  such that if $N\geq N_1$ the following holds:
        \begin{align}
        \label{8.7}
            \Pr\of{\mathcal{T}_{k}-\E \mathcal{T}_{k}\leq x-\frac{\delta}{3}}-\frac{\delta}{3}&\leq \Pr\of{S_N(f_k)-\E S_N(f_k)\leq x}\leq 
\Pr\of{\mathcal{T}_{k}-\E \mathcal{T}_{k}\leq x+\frac{\delta}{3}}+\frac{\delta}{3}.
        \end{align}
        We observe that  $N_1$ may depend  on $k$ so we simply choose the $N_1$ associated to $(\max(K_0,K_1))$. Thus  we let 
$k\geq K=\max (K_0, K_1)$ and $N\geq N_2= \max (N_0, N_1(\max (K_0, K_1)))$. Combining the rightmost inequalities in (\ref{8.6}) and (\ref{8.7}) and 
replacing $x$ with $x+\frac{\delta}{3}$, we obtain the following bound for the first term of (\ref{8.4}): 
            \begin{align*}
                \Pr\of{S_N(f_k)-\E S_N(f_k)\leq x+\frac{\delta}{3}}&\leq \Pr\of{\mathcal{T}_k-\E \mathcal{T}_k\leq x+\frac{2\delta}{3}}+\frac{\delta}{3}\\
                &\leq \Pr\of{\mathcal{T}_{\infty}-\E \mathcal{T}_{\infty}\leq x+\delta}+\frac{2\delta}{3}.
            \end{align*}
        Finally, using (\ref{8.5}) to bound the variance in (\ref{8.4}), we obtain the final $\delta/3$ term needed to ensure the desired inequality:
            \[
                \Pr\of{S_N(f_k)-\E S_N(f_k)\leq x+\frac{\delta}{3}}\leq \Pr\of{\mathcal{T}_{\infty}-\E \mathcal{T}_{\infty}\leq x+\delta}+\delta
            \]
        Using the same $K$ and $N_2$ we can now verify (\ref{8.3}). Indeed, assuming $k\geq K$ and $N\geq N_2$ by (\ref{8.6}) and (\ref{8.7}), we have:
        \begin{align*}
            \Pr(\mathcal{T}_{\infty}-\E \mathcal{T}_{\infty}\leq x&-\delta)-\delta\\
            &\leq \Pr\of{\mathcal{T}_k-\E \mathcal{T}_k\leq x-\frac{2\delta}{3}}-\frac{2\delta}{3}\nonumber\\
            &\leq \Pr\of{S_N(f_k)-\E S_N(f_k)\leq x-\frac{\delta}{3}}-\frac{\delta}{3}\nonumber\\
            &\leq \Pr\of{S_N(f)-\E S_N(f)\leq x}+\Pr\of{|S_N(f-f_k)-\E S_N(f-f_k) |\geq \frac{\delta}{3}}-\frac{\delta}{3}\nonumber\\
            &\leq\Pr\of{S_N(f)-\E S_N(f)\leq x},
        \end{align*}
        where the last inequality follows from the bound given in (\ref{8.5}). This concludes the proof for the case $\beta=2$.
        
        If $\beta\neq 2$, then we replace the Chebyshev bound in (\ref{8.4}) with the corresponding Markov bound and apply the results of Jiang 
and Matsumoto \cite{jm}. To see this, we will first rewrite the tail as
        \begin{align}
\label{tailtail}
            S_N(f-f_k)-\E{S_N(f-f_k)}=\sum_{m=k+1}^{\infty}\hat{f}(m)\left(|t_{N,m}(\overbar{\theta})|^2-\E{|t_{N,m}(\overbar{\theta})|^2}\right),
        \end{align}
            where
            \[
                t_{N,m}(\overbar{\theta})=\sum_{j=1}^N e^{im\theta_j}.
            \]
        
         For $0<\beta<2$, the proof of Lemma 4.3 in \cite{jm} gives the bound $\E|t_{N,m}(\overbar{\theta})|^2\leq (2/\beta)m$ for all $m\geq 1$ and $N\geq 2$. It follows that
            \begin{align*}
                \Pr\of{|S_N(f-f_k)-\E S_N(f-f_k) |\geq \frac{\delta}{3}}&\leq \frac{3\E{|S_N(f-f_k)-\E S_N(f-f_k) |}}{\delta}\\
                &\leq \frac{3}{\delta}C\left(\sum_{m=k+1}^{\infty}|\hat{f}(m)| |m| \right),
            \end{align*}
        where $C$ is a constant independent of $N$. Applying the condition in Theorem 2.1 for $0<\beta<2$, the r.h.s. side of the above inequality vanishes asymptotically, independent of $N$.
    
        For $\beta=4$ we break up ( \ref{tailtail}) into three pieces:
        \begin{align*}
            S_N(f-f_k)-\E{S_N(f-f_k)} = \sum_{m=k+1}^N (*)+ \sum_{m=N+1}^{2N}(*) + \sum_{m=2N+1}^\infty(*).
        \end{align*}
        Proposition 2 in \cite{jm} states that there exist constants $C, K$, independent of $N$, such that $\E|p_m(\overbar{\theta})|^2\leq Cm$ in the first sum, $\E|p_m(\overbar{\theta})|^2\leq Km\log(m+1)$ in the second sum, and $\E|p_m(\overbar{\theta})|^2\leq 2K\*N$ in the third sum. This gives the following bound for any $\alpha>0$: 
            \begin{align*}
                |S_N(f-f_k&)-\E S_N(f-f_k) |\\ 
                    &\leq C'\left(\sum_{m=k+1}^N m\cdot|\hat{f}(m)| + \sum_{m=N+1}^{2N} m\log(m+1)|\hat{f}(m)| +  \sum_{m=2N+1}^\infty m\cdot|\hat{f}(m)|\right),
            \end{align*}
        where $C'$ is a constant independent of $k$ and $N$. Applying the condition in Theorem 2.1 for $\beta=4$, the first sum goes to zero in $k$ independent of $N$ and the last two sums go to zero in $N$ independent of $k$.
    
        When $2<\beta\neq 4$, we break the tail as follows:
        \begin{align*}
            S_N(&f-f_k)-\E{S_N(f-f_k)}\\
                       &=\sum_{m=k+1}^{N/2}\hat{f}(m)\left(|t_{N,m}(\overbar{\theta})|^2-\E{|t_{N,m}(\overbar{\theta})|^2}\right)+\sum_{m=N/2+1}^{\infty}\hat{f}(m)\left(|t_{N,m}(\overbar{\theta})|^2-\E{|t_{N,m}(\overbar{\theta})|^2}\right)
        \end{align*}
        In the first sum, $\E{|t_{N,m}(\theta_n)|^2}\leq Cm$ where $C=2/\beta$ for $0<\beta<2$ and $C= e^{1-2/\beta}$ for $\beta>2$. 
For the second sum, we use the trivial bound  $\E{|t_{N,m}(\overbar{\theta})|^2}\leq N^2$ to get
            \[
                \E{|S_N(f-f_k)-\E S_N(f-f_k) |}\leq \left(2C\sum_{m=k+1}^{N/2}|\hat{f}(m)| |m| + 
8\sum_{m=N/2+1}^{\infty}|\hat{f}(m)| |m|^2\right).
            \]
        Once again, the first sum goes to zero in $k$ independent of $N$ and the second sum goes to zero in $N$ independent of $k$.
  \end{proof}

\section{Appendix 2}

Denote
\begin{align*}
G(k_1, \ldots, k_n):=\sum_{\sigma \in S_n} \ \sum_{m=1}^n \frac{(-1)^{m-1}}{m} \*\sum_{\substack{(n_1, \ldots, n_m):
\\ n_1+\ldots n_m=n, \ n_1, \ldots n_m\geq 1}} 
\frac{1}{n_1!\cdots n_m!} \* \\
\max\left(0, \sum_i^{n_1} k_{\sigma(i)}, \sum_i^{n_1+n_2} k_{\sigma(i)}, \ldots, \sum_i^{n_1+\ldots+n_{m-1}} k_{\sigma(i)}\right).
\end{align*}

The following statement was proven in \cite{sasha}:
\begin{lemma}
Let $\sum_i k_i=0.$ Then $G(k_1, \ldots, k_n)$ equals zero for $n>2$ and $G(k,-k)=|k|$ for $n=2.$
\end{lemma}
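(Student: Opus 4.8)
The plan is to strip away the permutation-and-composition bookkeeping, recast $G$ as an alternating sum over chains of subsets, and then recognize what remains as a Spitzer-type identity. Write $K_S:=\sum_{i\in S}k_i$ for $S\subseteq\{1,\ldots,n\}$, so that $K_{\{1,\ldots,n\}}=0$ by hypothesis. For a fixed $\sigma$ and composition $(n_1,\ldots,n_m)$, the partial sums $\sum_i^{n_1+\cdots+n_j}k_{\sigma(i)}$ inside the maximum depend only on the ordered set partition $(B_1,\ldots,B_m)$ cut out of $\sigma$, through the prefix unions $C_j:=B_1\cup\cdots\cup B_j$, since that partial sum is exactly $K_{C_j}$. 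As precisely $n_1!\cdots n_m!$ pairs $(\sigma,\text{composition})$ produce a given ordered set partition, the weights $1/(n_1!\cdots n_m!)$ are absorbed and
\[
G(k_1,\ldots,k_n)=\sum_{(B_1,\ldots,B_m)}\frac{(-1)^{m-1}}{m}\,\max\bigl(0,K_{C_1},\ldots,K_{C_{m-1}}\bigr),
\]
the sum running over all ordered set partitions, i.e.\ over all chains $\emptyset=C_0\subsetneq C_1\subsetneq\cdots\subsetneq C_m=\{1,\ldots,n\}$.

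Next I would isolate the two sources of cancellation. The first is purely enumerative: the number of ordered set partitions of an $n$-set into $m$ blocks is $m!\,S(n,m)$ (Stirling numbers of the second kind), and the exponential generating identity $\sum_{m\ge1}\frac{(-1)^{m-1}}{m}(e^x-1)^m=\log e^x=x$ forces $\sum_m\frac{(-1)^{m-1}}{m}\,m!\,S(n,m)=\delta_{n,1}$. Hence any contribution to $G$ that is constant along the chain (not sensitive to the $K_{C_j}$) cancels for $n\ge2$. The second, essential source is Spitzer's combinatorial lemma, which is tailor-made for the weight $1/m$ attached to a maximum of prefix sums. Using the layer-cake formula $\max(0,a_1,\ldots,a_r)=\int_0^\infty\mathbf{1}\{\max_i a_i>t\}\,dt$ together with $K_{C_0}=K_{C_m}=0$, the inner maximum can be averaged over the $m$ cyclic rotations of the blocks: because the block sums $b_i=K_{B_i}$ satisfy $\sum_i b_i=0$, the partial sums $S_j:=K_{C_j}$ close into a cycle and the elementary identity
\[
\frac{1}{m}\sum_{r=0}^{m-1}\max_{0\le j\le m-1}\bigl(S_{r+j}-S_r\bigr)=M-\frac{1}{m}\sum_{r=0}^{m-1}S_r,\qquad M=\max_{0\le j\le m-1}S_j,
\]
(indices cyclic, $S_m=0$) replaces the nonlinear maximum by a rotation-invariant quantity. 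Inserting this into the $\frac{(-1)^{m-1}}{m}$-weighted sum is exactly the finite, signed analogue of the exponentiation in Spitzer's identity.

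For $n=2$ only $\sigma=\mathrm{id}$ and $\sigma=(1\,2)$ with $m\in\{1,2\}$ contribute; the $m=1$ terms vanish since the maximum degenerates to $\max(0)=0$, and the two $m=2$ terms combine through $\max(0,k)+\max(0,-k)=|k|$, producing the asserted $|k|$ up to the normalization in the statement. For $n>2$ the claim is total cancellation, and the mechanism is clearest in incidence-algebra language: letting $V(t)$ be the resummed alternating chain-sum restricted to $\{S:K_S\le t\}$ — equivalently $(\log\zeta)(\emptyset,\{1,\ldots,n\})$ in the incidence algebra of that sub-poset — one obtains $G=-\int_0^\infty V(t)\,dt$, with $V(t)=0$ once $t\ge\max_S K_S$ (there the poset is the full Boolean lattice and the enumerative identity applies), and with the nonzero values of $V$ on the intermediate threshold intervals integrating to zero (one checks this directly on small $n$).

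The step I expect to be the main obstacle is precisely this last cancellation for general $n>2$. The contributions $V(t)$ on the successive threshold intervals $(K_S,K_{S'})$ are individually nonzero and cancel only in aggregate, so one may not integrate the alternating sum term by term: the per-partition integrals diverge, and the convergence of $\int_0^\infty V(t)\,dt$ rests entirely on the $\frac{(-1)^{m-1}}{m}$ structure being kept intact. The clean way to preserve it is to run the cyclic Spitzer averaging of the second paragraph uniformly across all block structures, so that the $\log$-type resummation linearizes the underlying generating function and leaves only the $n=2$ linear term. Verifying that the boundary contributions forced by $\sum_i k_i=0$ telescope exactly, with no lower-order remnant surviving, is where the real work lies.
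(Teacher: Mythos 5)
The paper offers no proof of this lemma: it is quoted from \cite{sasha} (where it appears as Lemma 2), so your attempt has to stand entirely on its own, and it does not close. The reformulation of $G$ as a sum over ordered set partitions, $G=\sum_{(B_1,\ldots,B_m)}\frac{(-1)^{m-1}}{m}\max\bigl(0,K_{C_1},\ldots,K_{C_{m-1}}\bigr)$, is correct, and the identity $\sum_m\frac{(-1)^{m-1}}{m}\,m!\,S(n,m)=\delta_{n,1}$ does dispose of any chain-independent contribution. But neither of the two mechanisms you propose for the essential cancellation at $n>2$ actually produces it. The cyclic-averaging identity is true; however, the family of ordered set partitions into $m$ blocks is itself closed under cyclic rotation of the blocks (every orbit having size exactly $m$, since distinct disjoint blocks cannot be permuted into themselves by a nontrivial rotation), so summing that identity over the whole family returns, writing $\Phi(B)$ for $\max\bigl(0,K_{C_1},\ldots,K_{C_{m-1}}\bigr)$, only $\sum_B\Phi(B)=\sum_B\Phi(B)-\frac{1}{m}\sum_B\sum_rK_{C_r}$, i.e.\ the tautology $\sum_B\sum_rK_{C_r}=0$, which already follows from $K_C+K_{C^c}=0$. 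It yields no information about $\sum_B\Phi(B)$ itself. The incidence-algebra reformulation $G=-\int_0^\infty V(t)\,dt$ is a legitimate restatement, and $V(t)=0$ for $t\ge\max_SK_S$ is the enumerative identity again; but the claim that the nonzero values of $V$ on the intermediate threshold intervals integrate to zero \emph{is} the lemma, and ``one checks this directly on small $n$'' is not an argument. You acknowledge this yourself in the final paragraph; that acknowledgement is accurate, and it means the case $n>2$ --- the entire content of the statement --- remains unproven.

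A secondary issue: for $n=2$ the displayed definition gives $G(k,-k)=\frac{(-1)^{2-1}}{2}\bigl(\max(0,k)+\max(0,-k)\bigr)=-|k|/2$, not $|k|$. Your phrase ``up to the normalization in the statement'' conceals a genuine factor-of-$(-2)$ mismatch between the definition of $G$ in Appendix 2 and the value asserted in the lemma (note that Lemma 5.2(iv), $\kappa_2^{(N)}(k,-k)=\min(N,|k|)$, is consistent with $G(k,-k)=-|k|/2$ through $\kappa_2^{(N)}(k,-k)=-G(k,-k)-G(-k,k)$ for $2|k|\le N$). A complete proof must either verify the stated value or pin down the normalization used in \cite{sasha}; when the exact constant is what the lemma asserts, ``up to normalization'' is not a conclusion.
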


The following standard lemma plays an important role in the CLT proof in the microscopic case (Theorem 2.5, Section 7).

\begin{lemma}
Let $ X_1, \ldots, X_{2n}$ be centered random variables with finite mathematical expectations. Then
\begin{align}
\label{cenmoments}
\E \prod_{1\leq i\leq n} (X_{2i-1}\*X_{2i}-\E X_{2i-1}\*X_{2i})   = \sum^*_{\pi} \prod_{B\in \pi} \kappa(X_i: i \in B),
\end{align}
where where the sum on the r.h.s. of (\ref{cenmoments}) is over all partitions $\pi$ of $\{1, \ldots, 2n\}$ that do not contain atoms and 
two-element subsets of the 
form $\{2i-1, 2i\}, \ i=1, \ldots,n.$
\end{lemma}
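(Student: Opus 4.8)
The plan is to deduce the identity from the ordinary moment--cumulant formula \eqref{moments} by a single inclusion--exclusion over subsets, the decisive mechanism being a binomial cancellation that annihilates precisely the forbidden pair-blocks. Write $c_i:=\E(X_{2i-1}X_{2i})$, and note first that, since each $X_j$ is centered, $c_i=\kappa(X_{2i-1},X_{2i})$ is itself the joint cumulant of the pair. Expanding the product of the centered pairs gives
\[
\E\prod_{i=1}^n\bigl(X_{2i-1}X_{2i}-c_i\bigr)
=\sum_{S\subseteq\{1,\ldots,n\}}(-1)^{n-|S|}\,M(S)\prod_{i\notin S}c_i,
\qquad M(S):=\E\Bigl(\prod_{i\in S}X_{2i-1}X_{2i}\Bigr).
\]
To each $M(S)$ I would apply \eqref{moments}, expanding it as a sum over all partitions of the index set $[S]:=\{2i-1,2i:i\in S\}$, so that every summand above becomes a product $\prod_{B\in\rho}\kappa(X_j:j\in B)$ for a suitable partition $\rho$ of $\{1,\ldots,2n\}$.

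The heart of the argument is to reorganize the double sum into a single sum over partitions $\rho$ of $\{1,\ldots,2n\}$. In each term the blocks of $\rho$ contained in $[S]^c$ are exactly the pairs $\{2i-1,2i\}$ with $i\notin S$ (contributed by the factors $c_i$), while the remaining blocks lie inside $[S]$ (contributed by $M(S)$); conversely, for a fixed $\rho$ this splitting into an $M(S)$-part and a $c_i$-part is unique. Letting $P(\rho)\subseteq\{1,\ldots,n\}$ denote the set of indices $i$ for which $\{2i-1,2i\}$ is a block of $\rho$, one checks that $\rho$ occurs in the $S$-summand if and only if $\{2i-1,2i\}$ is a block of $\rho$ for every $i\notin S$, i.e. $S^c\subseteq P(\rho)$. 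Hence
\[
\E\prod_{i=1}^n(X_{2i-1}X_{2i}-c_i)
=\sum_{\rho}\Bigl(\prod_{B\in\rho}\kappa(X_j:j\in B)\Bigr)\sum_{S:\,S^c\subseteq P(\rho)}(-1)^{n-|S|}.
\]

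Finally I would evaluate the inner alternating sum. Putting $p=|P(\rho)|$ and writing $S=P(\rho)^c\cup T$ with $T\subseteq P(\rho)$ gives $n-|S|=p-|T|$, so the inner sum equals $(-1)^p\sum_{T\subseteq P(\rho)}(-1)^{|T|}=(-1)^p(1-1)^p$, which is $1$ when $p=0$ and $0$ otherwise. Thus only partitions $\rho$ with $P(\rho)=\varnothing$ survive, that is, those containing none of the pairs $\{2i-1,2i\}$. Moreover any $\rho$ possessing a singleton $\{j\}$ contributes the factor $\kappa(X_j)=\E X_j=0$ and so drops out; together these two restrictions are exactly the conditions defining $\sum^*_\rho$ in \eqref{cenmoments}. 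I expect the only delicate point to be the reorganization step — verifying that the pairs $(S,\pi)$ correspond bijectively to partitions $\rho$ with the stated occurrence condition $S^c\subseteq P(\rho)$ — after which the cancellation $(1-1)^p$ makes the conclusion immediate.
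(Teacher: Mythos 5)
Your proof is correct and follows essentially the same route as the paper's: expand the product of centered pairs, apply the moment--cumulant formula to each resulting expectation, and observe that the coefficient of a partition containing $p\geq 1$ forbidden pair-blocks is $\sum_{k=0}^{p}(-1)^k\binom{p}{k}=0$, while singletons die because the variables are centered. Your inner alternating sum over $S$ with $S^c\subseteq P(\rho)$ is exactly the paper's binomial cancellation, just with the bijection between pairs $(S,\sigma)$ and partitions $\rho$ spelled out more explicitly.
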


\begin{proof}

It follows from (\ref{moments}) that the r.h.s. of (\ref{cenmoments}) is equal to a linear combination of $\prod_{B\in \pi} \kappa(X_i: i \in B),$  where
$\pi$ runs over the list of partitions of $\{1,2,\ldots, 2n\}.$ Since $X_i$'s are centered, partitions $\pi$ with one-element subsets (atoms) give 
zero contribution.
If  $\pi$ does not contain a subset of the form  $\{2i-1, 2i\}, \ i=1, \ldots,n,$ then the coefficient in front of the product 
$ \prod_{B\in \pi} \kappa(X_i: i \in B)$ in the linear 
combination is $1$ since it comes from $\E \prod_{1\leq i\leq n} X_{2i-1}\*X_{2i}.$
Finally, suppose that $\pi$ contains $s$ two-elements subsets of the prescribed form, namely $\{2i_1-1, 2i_1\}, \ldots, 
\{2i_s-1, 2i_s\}, \ \ 1\leq s\leq n.$ Then the coefficient in front of  $\prod_{B\in \pi} \kappa(X_i: i \in B)$ is equal to  
\begin{align}
\sum_{k=0}^s (-1)^k \frac{s!}{k!\*(s-k)!}=0.
\end{align}
\end{proof}

For convenience of the reader, we finish this section with the proposition which is related to Lemma 1 and (2.8) from \cite{sasha}.
\begin{prop}
\begin{align}
   & & \kappa(T_N(f_1), \ldots, T_N(f_n))=\sum_{m=1}^n\sum_{\substack{\text{ordered collections} \\\text{of subsets }
\mathcal{R}=\{R_1,..R_m\}}}\frac{(-1)^{m-1}}{m}\sum_{k_1+...+k_N=0}\nonumber \\
& & 
\widehat{f_{j_1^1}}(k_1)...\widehat{f_{j_{l_1}^1}}(k_{l_1})\widehat{f_{j_1^2}}(k_{i_1+1})....
\widehat{f_{j_{l_2}^2}}(k_{l_1+l_2})
   \widehat{f_{j_1^m}}(k_{l_1+l_2+...l_{m-1}+1})...\widehat{f_{j_{l_m}^m}}(k_N)\nonumber\\
& & 
    \times \#\{u:0\leq u\leq N-1, 0\leq u+\sum_{i=1}^{l_1}k_i\leq N-1,...0\leq u+\sum_{i=1}^{l_1+...l_{m-1}}k_i\leq N-1\},
\end{align}
where the sum is over all ordered collections of subsets $\mathcal{R}=\{R_1,..R_m\}$ such that $\bigsqcup_{1\leq i\leq m} R_i=\{1,2,\ldots, N\},$
and $R_1=\{j_1^1, \ldots, j_{l_1}^1\}, \ \ R_2=\{j_1^2, \ldots, j_{l_2}^2\}, \ldots, \ R_m=\{j_1^m, \ldots, j_{l_m}^m\}.$
\end{prop}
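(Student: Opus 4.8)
The plan is to reduce the general statement to Lemma 5.1 by the multilinearity of joint cumulants, and then to carry out a purely combinatorial reorganization that turns the sum over compositions and permutations appearing in Lemma 5.1 into the sum over ordered set partitions appearing here.

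First I would expand each test function in its Fourier series, $f_j(\theta)=\sum_{k\in\mathbb{Z}}\hat{f_j}(k)e^{ik\theta}$, so that $T_N(f_j)=\sum_k\hat{f_j}(k)\,t_{N,k}$ with $t_{N,k}$ the trace of the $k$-th power defined in (\ref{kkk}). Since the $f_j$ are smooth, these series converge in every $L^p$ and the $T_N(f_j)$ have finite moments of all orders, so the multilinearity of cumulants applies term by term and gives
\[
\kappa(T_N(f_1),\ldots,T_N(f_n))=\sum_{k_1,\ldots,k_n\in\mathbb{Z}}\left(\prod_{j=1}^n\hat{f_j}(k_j)\right)\kappa_n^{(N)}(k_1,\ldots,k_n).
\]
The next step is to substitute the explicit expression for $\kappa_n^{(N)}$ from Lemma 5.1, which automatically restricts the outer sum to $\sum_i k_i=0$.

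The heart of the argument is the passage from the $(n_1,\ldots,n_m)$-and-$\sigma$ description to ordered set partitions. For a fixed block number $m$, the inner data of Lemma 5.1 are a composition $(n_1,\ldots,n_m)$ of $n$, a weight $1/(n_1!\cdots n_m!)$, and a sum over all $\sigma\in S_n$ of the counting factor $\#\{u:\ldots\}$. I would observe that this factor depends on $\sigma$ only through the nested sets $\sigma(\{1,\ldots,n_1\})\subset\sigma(\{1,\ldots,n_1+n_2\})\subset\cdots$, equivalently through the ordered set partition $(R_1,\ldots,R_m)$ with $R_a=\sigma(\{n_1+\cdots+n_{a-1}+1,\ldots,n_1+\cdots+n_a\})$, because the partial sums $\sum_{i=1}^{n_1+\cdots+n_a}k_{\sigma(i)}=\sum_{i\in R_1\cup\cdots\cup R_a}k_i$ are insensitive to the order of indices within each block. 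Each ordered set partition with block sizes $(n_1,\ldots,n_m)$ arises from exactly $n_1!\cdots n_m!$ permutations $\sigma$, so summing over $\sigma$ and dividing by $n_1!\cdots n_m!$ replaces $\sum_\sigma$ by a sum over ordered set partitions of those sizes; summing over all compositions then produces the sum over all ordered set partitions $\{R_1,\ldots,R_m\}$ of $\{1,\ldots,n\}$. A final relabeling of the dummy variables $k_1,\ldots,k_n$, attaching $k_1,\ldots,k_{n_1}$ to the functions indexed by $R_1$, the next $n_2$ to $R_2$, and so on, puts the Fourier product and the counting factor into the stated form.

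The main obstacle is entirely bookkeeping: one must check carefully that the summand in Lemma 5.1 is invariant under within-block reorderings and that the multiplicity of each ordered set partition is exactly $\prod_a n_a!$, so that the factorials cancel. The only analytic point is the legitimacy of interchanging the (a priori infinite) $k$-summation with the finite combinatorial sums, which follows from absolute convergence, guaranteed by the rapid decay of the $\hat{f_j}$ together with the bound $|\kappa_n^{(N)}(k_1,\ldots,k_n)|\le \mathrm{const}_n\,N$ of Lemma 5.2(i) and its support in $\{\sum_i k_i=0\}$. Alternatively, one could bypass Lemma 5.1 and derive the formula directly from the Fredholm-determinant identity $\E\prod_i e^{g(\theta_i)}=\det(1+(e^{g}-1)Q_N)$ with $g=\sum_j\lambda_j f_j$, by expanding $\log\det$, differentiating in $\lambda_1,\ldots,\lambda_n$ at the origin, and evaluating the traces $\Tr(M_{g_{R_1}}Q_N\cdots M_{g_{R_m}}Q_N)$, $g_{R_a}=\prod_{j\in R_a}f_j$, against the kernel (\ref{detform}); this reproduces the same counting factor and gives an independent check.
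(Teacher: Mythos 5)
Your proposal is correct, but it takes a genuinely different route from the paper. The paper proves this proposition from scratch: it computes the joint moments $\E\prod_j T_N(f_j)$ by splitting over coinciding indices, inserts the determinantal correlation functions $\rho_r=\det(Q_N(\theta_i,\theta_j))$, decomposes each determinant into cycles, matches the resulting cluster structure against the moment--cumulant relation to isolate $\kappa_{\{i_1,\ldots,i_l\}}$, symmetrizes the cyclic sum into a full sum over $S_m$ with the $\frac{1}{m}$ weight, and only at the very end passes to Fourier coefficients and convolutions to produce the counting factor $\#\{u:\ldots\}$. You instead go in the opposite direction: you Fourier-expand first, invoke multilinearity of cumulants to reduce everything to the trace cumulants $\kappa_n^{(N)}(k_1,\ldots,k_n)$ of Lemma 5.1, and then perform the composition-plus-permutation $\to$ ordered-set-partition resummation; that combinatorial step is carried out correctly (the counting factor indeed depends on $\sigma$ only through the nested unions $R_1\cup\cdots\cup R_a$, and the multiplicity $n_1!\cdots n_m!$ cancels the weight exactly), and your justification of the interchange of summations via $|\kappa_n^{(N)}|\le \mathrm{const}_n\,N$ and $\ell^1$ decay of the $\hat{f_j}$ is adequate. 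The one caveat is architectural rather than mathematical: Lemma 5.1 is itself the specialization of this proposition to $f_j=e^{ik_j\theta}$, and the paper states it by citing formula (2.8) of \cite{sasha}; the evident purpose of the appendix is to give a self-contained derivation of that formula. Your argument is logically valid inside the paper (Lemma 5.1 rests on the external citation, not on this proposition), but it inverts the intended direction of derivation and would not serve as an independent proof of the \cite{sasha} identity. Your closing alternative --- expanding $\log\det(1+(e^g-1)Q_N)$ and differentiating at the origin --- is essentially the paper's actual method and is what you would need if self-containedness were required.
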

\begin{proof}
We start by computing the joint moment of linear CUE statistics.
\begin{align*}  E_{\{1,2,...,n\}}=\E \of{\sum_{i_1=1}^N f_{1}(\theta_{i_1})\cdot ...\cdot\sum_{i_n =1}^N f_n(\theta_{i_n})}\end{align*}

Let $\mathcal{M}$ be a partition of $\{1,2,...n\}$ into subsets determined by coinciding indices in $i_1,..., i_n$. The above mixed moment 
can be rewritten as:

\begin{align}\label{cum:1}
E_{\{1,2,...,n\}}=\sum_{\substack{\text{partitions} \\\mathcal{M}=\{M_1,..,M_r\}\\\sqcup M_i=\{1,...,n\}}}
\E \left (\sum_{l_1\neq...\neq l_m}f_{M_1}(\theta_{l_1})\cdot...\cdot f_{M_r}(\theta_{l_r})\right)\end{align}

Here   $f_M(\theta)=\prod_{j\in M}f_j(\theta)$ . To compute the expectations, we use the determinantal 
structure of the CUE point-correlation functions. Indeed,
\[
\E_{\text{CUE}(N)}\sum_{l_1\neq...\neq l_r}f_{M_1}(\theta_{l_1})\cdot...\cdot f_{M_r}(\theta_{l_r})=
\int_{[0,2\pi]^r} f_{M_1}(x_1)\cdot...\cdot f_{M_r}(x_r)\*\rho_r(x_1,\ldots, x_r) \ dx_1\ldots dx_r,
\]
where
\begin{align}
\label{corfunction}
\rho_r(x_1, \ldots, x_r)=\det\left(Q_N(x_i,x_j)_{i,j=1,\ldots,r}\right),\ \ 
Q_N(x,y)=\frac{1}{2\*\pi}\*\sum_{j=0}^{N-1} e^{i\*j\*(x-y)}. 
\end{align}

Writing
\begin{align}
 \label{cum:2}   \rho_r(\theta_1,&...,\theta_r)=\sum_{\sigma\in S_r}(-1)^{|\sigma|}\prod_{i=1}^{r}Q_N(\theta_i, \theta_{\sigma(i)})\nonumber \\
    &=\sum_{\substack{\text{partitions } \mathcal{K}\\ 
\sqcup K_{\alpha}=\{1,...,r\}}}\of{\prod_{\alpha=1}^q (-1)^{p_{\alpha}-1}\sum_{\substack{\text{cyclic perms}\\
\text{of $K_{\alpha}$}}}\prod_{j=1}^{p_{\alpha}}Q_N\of{\theta_{t_j^{(\alpha)}},\theta_{\sigma(t_j^{(\alpha)})}}}
\end{align}
   
   In the second equality we wrote the permutation $\sigma\in S_r$ as a product of cycles. This partitions $\{1,2,...,r\}=
\sqcup_{\alpha=1}^qK_{\alpha}=\sqcup_{\alpha=1}^q\{t_1^{(\alpha)},...,t_{p_{\alpha}}^{(\alpha)}\}$ into supports of those cycles. 
The expression resulting from computing the 
expectations in (\ref{cum:1}) using (\ref{corfunction}-\ref{cum:2}) can be simplified by  defining a new partition . 
Let $\mathcal{P}=\sqcup_{i=1}^q P_i$ where  $P_i=\sqcup_{j\in K_i}M_j$. Now observe that $\mathcal{P}_i=\sqcup_{j\in K_i}M_j$ induces a 
partition of each $P_i$. Thus, exchanging summation:

    \begin{align}E_{\{1,2,...,n\}}&=\sum_{\substack{\text{partitions } \mathcal{P}\\ \text{of }\{1,2...,n\}}}
\prod_{i=1}^q \sum_{\substack{\text{partitions } \mathcal{P}_i\\ \text{of }\{P_{i,1},...,P_{i,t_i}\}}}
\int_{\mathbb{T}^{t_i}}f_{P_{i,1}}(\theta_1)\cdot...\cdot f_{P_{i,t_{i}}}(\theta_{t_i})\nonumber \\
    &\label{cum:3}\times(-1)^{t_i-1}\sum_{\substack{\text{cyclic perms}\\ \sigma\in S_{t_i}}}\prod_{j=1}^{t_i}Q_N(\theta_j,\theta_{\sigma(j)})\end{align}

Recall that joint cumulants and joint moments are related by the following formula:

\begin{align*}
E_{\{1,2,...N\}}=\sum_{\substack{\text{partitions} \\\mathcal{M}=\{M_1,..,M_r\}\\\sqcup M_i=\{1,...,N\}}} \kappa_{M_1}\cdot...\cdot 
\kappa_{M_r}=\end{align*}

Comparing it with (\ref{cum:3}) we can express the joint  cumulants with indices $\{i_1,...i_l\}$ as:
\begin{align*}
\kappa_{\{i_1,...,i_l\}} =\sum_{m=1}^l (-1)^{m-1} \sum_{\substack{\text{partitions} \\\mathcal{R}=\{R_1,..R_m\}\\
\sqcup R_i=\{i_1,...i_l\}}}\int_{\mathbb{T}^m}f_{R_1}(\theta_1)...f_{R_m}(\theta_m) \sum_{\substack{\text{cyclic}\\
\text{permutations}\\
\sigma \in S_m}}\prod_{j=1}^m Q_N(\theta_j,\theta_{\sigma(j)})d\theta_1...d\theta_m\end{align*}

We may replace the range of the inside sum  by averaging  over all permutations to obtain:

\begin{align*}\kappa_{\{i_1,...,i_l\}}=\sum_{m=1}^l \frac{(-1)^{m-1}}{m}\sum_{\substack{\text{partitions} 
\\\mathcal{R}=\{R_1,..R_m\}\\\sqcup R_i=\{i_1,...i_l\}}}
\int_{\mathbb{T}^m}f_{R_1}(\theta_1)...f_{R_m}(\theta_m) \sum_{\sigma\in S_m}\prod_{j=1}^m Q_N(\theta_{\sigma(j)},
\theta_{\sigma(j+1)})d\theta_1...d\theta_m\end{align*}

Next we observe that the change of variables $\theta_{\sigma(j)}\rightarrow \theta_j$ effectively amounts to permuting the elements of the $R_i$'s 
and hence;

\begin{align*}
\kappa_{\{i_1,...,i_l\}}&=\sum_{m=1}^l\sum_{\substack{\text{partitions} \\\mathcal{R}=\{R_1,..R_m\}\\\sqcup R_i=\{i_1,...i_l\}}}\sum_{\sigma\in S_m} 
\int_{\mathbb{T}^m}f_{\sigma(R_1)}(\theta_1)...f_{\sigma(R_m)}(\theta_m) 
\frac{(-1)^{m-1}}{m}\prod_{j=1}^m Q_N(\theta_{j},\theta_{j+1})d\theta_1...d\theta_m\\
&=\sum_{m=1}^l\sum_{\substack{\text{ordered collections} \\\text{of subsets } \mathcal{R}=\{R_1,..R_m\}}} 
\int_{\mathbb{T}^m}f_{R_1}(\theta_1)...f_{R_m}(\theta_m) \frac{(-1)^{m-1}}{m}\prod_{j=1}^m Q_N(\theta_{j},\theta_{j+1})d\theta_1...d\theta_m
\end{align*}

Finally, we integrate to obtain the following expression in terms of Fourier coefficients:

\begin{align}\label{cum:4}\kappa_{\{i_1,...,i_l\}}=\sum_{m=1}^l\sum_{\substack{\text{ordered collections} 
\\\text{of subsets }\\ \mathcal{R}=\{R_1,..R_m\}}} \frac{(-1)^{m-1}}{m}\sum_{s_1=0}^{N-1}...
\sum_{s_m=0}^{N-1}\hat{f}_{R_1}(-s_m+s_1)\hat{f}_{R_2}(-s_1+s_2)\cdot...\cdot\hat{f}_{R_m}(-s_{m-1}+s_m)
\end{align}

The Fourier coefficients $\hat{f}_{R_i}(-s_{i-1}+s_i)$ can be expanded as convolutions of the form:

\begin{align*}
   & \hat{f}_{R_1}(-s_m+s_1)=\sum_{\substack{(k_1,...,k_{|R_1|)}\\ k_1+k_2...+k_{|R_1|}
=-s_m+s_1}}\hat{f}_{1,1}(k_1)\cdot...\cdot \hat{f}_{1,|R_1|}(k_{|R_1|})\\
 & \quad \quad \quad \quad \quad \quad \quad \quad \quad \quad  \vdots\\
  & \quad \quad \quad \quad \quad \quad \quad \quad \quad \quad  \vdots\\
 & \hat{f}_{R_m}(-s_{m-1}+s_m)=\sum_{\substack{(k_{|R_1|+...+|R_{m-1}|+1}, ... ,k_l)\\ k_{|R_1|+...+|R_{m-1}|+1}...+k_{l}
=-s_m+s_1}}\hat{f}_{1,m}(k_{|R_1|+...+|R_{m-1}|+1})\cdot...\cdot \hat{f}_{1,|R_1|}(k_{l})
\end{align*}

Note $\sum_{1}^{l}k_i=0$ for the every term in the product of these convolutions. Counting over all possible possible  
$0\leq s_j=\sum_{i=1}^{l_1+...l_{j-1}}k_i\leq n-1$ from  (\ref{cum:4}) we arrive at the final explicit expression for the cumulants:

\begin{align}
   \kappa_{\{i_1,...,i_l\}}&=\sum_{m=1}^l\sum_{\substack{\text{ordered collections} \\\text{of subsets }\mathcal{R}=\{R_1,..R_m\}}}
\frac{(-1)^{m-1}}{m}\sum_{k_1+...+k_l=0}\hat{f_{j_1}}(k_1)...\hat{f}_{k_l}(k_l)\nonumber\\
    & \#\{u:0\leq u\leq N-1, 0\leq u+\sum_{i=1}^{l_1}k_i\leq N-1,...0\leq u+\sum_{i=1}^{l_1+...l_{m-1}}k_i\leq N-1\}
\end{align}

\end{proof}

\section{Appendix 3}
This appendix provides the modifications to Proposition 4.1 and Lemma 4.4  that are necessary to adapt the proof of Proposition 4.3 to the 
mesoscopic case $1\ll L_N\ll N.$
        
            \begin{lemma}(Extension of Proposition 4.1)\\
            If $\beta=2$ and $f\in C_c^\infty(\mathbb{R})$ is an even, smooth, compactly supported function on the real line, then, 
for sufficiently large $N$,
                \begin{align*}
                    \left(\frac{\pi}{2}\right)\var&\left(\frac{S_N(f(L_N\cdot))}{\sqrt{L_N}}\right) =\\
                    &\frac{1}{L_N}\sum_{1\leq s\leq N-1}\left(\frac{s}{L_N}\right)^2\left(\hat{f}
\left(\frac{s}{L_N}\right)\right)^2 + N^2\sum_{N\leq s} \left(\hat{f}\left(\frac{s}{L_N}\right)\right)^2\\ 
                    &- N\sum_{\substack{N\leq s}}\left(\hat{f}\left(\frac{s}{L_N}\right)\right)^2  
                    -\left(\frac{1}{L_N}\right)^2\sum_{\substack{1\leq s,t \\ 1\leq |s-t|\leq N-1\\ N\leq \max(s,t)}}
\left(\frac{N-|s-t|}{L_N}\right)\hat{f}\left(\frac{s}{L_N}\right)\hat{f}\left(\frac{t}{L_N}\right)\\
                    &-\left(\frac{1}{L_N}\right)^2\sum_{\substack{1\leq s,t\leq N-1\\N+1\leq s+t}}\left(\frac{(s+t)-N}{L_N}\right) 
\hat{f}\left(\frac{s}{L_N}\right)\hat{f}\left(\frac{t}{L_N}\right).
                \end{align*}
            \end{lemma}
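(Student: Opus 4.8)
The plan is to read the statement as a direct specialization of Proposition 4.1. Set $g_N := f(L_N\cdot)$, extended $2\pi$-periodically to the whole line; then $S_N(f(L_N\cdot))=S_N(g_N)$, and the claim is nothing more than the variance formula of Proposition 4.1 written out for the test function $g_N$, after inserting the Fourier coefficients of $g_N$ and performing the $L_N^{-1/2}$ normalization.

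First I would check that Proposition 4.1 applies to $g_N$. For $N$ large enough the support of $f(L_N\cdot)$ lies in $[-\pi,\pi]$, so its periodic extension $g_N$ is a real, even, $C^\infty$ function on $\mathbb{T}$; in particular $g_N'\in L^2(\mathbb{T})$, which is exactly the hypothesis of Proposition 4.1. Since $f$ is real and even, its Fourier transform $\hat f$ is real, so each $(\hat{g_N})^2$ and each product $\hat{g_N}(s)\hat{g_N}(t)$ that appears is an honest real quantity and no absolute values are needed.

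Next I would substitute the Fourier coefficients. As already recorded in the proof of Theorem 2.4, for $N$ sufficiently large
\[
\hat{g_N}(k)=\frac{1}{\sqrt{2\pi}\,L_N}\,\hat f\!\left(\frac{k}{L_N}\right),
\]
so that $(\hat{g_N}(k))^2=\frac{1}{2\pi L_N^2}\big(\hat f(k/L_N)\big)^2$ and $\hat{g_N}(s)\hat{g_N}(t)=\frac{1}{2\pi L_N^2}\hat f(s/L_N)\hat f(t/L_N)$. Inserting these into each of the five sums of Proposition 4.1 factors out the common constant $\frac{1}{2\pi L_N^2}$ from the entire expression for $\var(S_N(g_N))$.

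Finally I would carry out the normalization bookkeeping. Using $\var\!\big(S_N(g_N)/\sqrt{L_N}\big)=L_N^{-1}\var(S_N(g_N))$ and then multiplying by $\pi/2$ cancels the numerical prefactor, since $\frac{\pi}{2}\cdot\frac{4}{2\pi}=1$; the leftover powers of $L_N$ combine with the $\frac{1}{2\pi L_N^2}$ pulled out above. Rewriting each summand in the rescaled variable, $s^2/L_N^2=(s/L_N)^2$ and the factors $N-|s-t|$ and $(s+t)-N$ as $(N-|s-t|)/L_N$ and $((s+t)-N)/L_N$ times the matching power of $L_N$, then produces the stated display. There is no analytic content here: the identity is a purely algebraic rewriting of Proposition 4.1, and the only point demanding care -- the nearest thing to an obstacle -- is the tracking of the constant $\frac{\pi}{2}\cdot\frac{4}{2\pi}=1$ together with the accumulated powers of $L_N$, and fixing $N$ large enough that both the support condition (hence the Fourier identity) and the hypothesis of Proposition 4.1 hold simultaneously.
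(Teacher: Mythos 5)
Your proof is correct and takes essentially the same route as the paper: the paper likewise notes that for $N$ large the support condition gives $\widehat{f(L_N\cdot)}(k)=\frac{1}{\sqrt{2\pi}\,L_N}\hat f\!\left(\frac{k}{L_N}\right)$ and declares the lemma an immediate corollary of Proposition 4.1. (Your careful bookkeeping in fact shows that the second and third terms of the displayed identity should carry the same $L_N^{-3}$ prefactor as the others --- an apparent typo in the statement, harmless since those terms vanish in the limit.)
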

            
            \begin{proof}$\\$
            If $f\in C_c^\infty(\mathbb{R})$, then we may assume $N$ large enough so that the support of $f(L_N\cdot)$ is contained on $[-\pi,\pi)$. 
We can immediately express $f(L_N\cdot)$ as a Fourier Series with coefficients determined by the Fourier transform of $f$. Lemma 10.1 is then an 
immediate corollary to Proposition 4.1. 
            \end{proof}

            \begin{lemma}(Extension of Lemma 4.4)$\\$
            Let $f\in C_c^\infty(\mathbb{R})$. Then
                \begin{enumerate}
                    \item[(i)]
                        \begin{align*}
                            \left(\frac{1}{L_N}\right)^2\sum_{\substack{1\leq s,t\leq N\\ s+t\geq N+1}}\left(\frac{s}{L_N}\right)\left|\hat{f}
\left(\frac{s}{L_N}\right)\right|\cdot\left|\hat{f}\left(\frac{t}{L_N}\right)\right|\to 0;
                        \end{align*}
                    \item[(ii)]
                        \begin{align*}
                            \frac{N+1}{L_N^3}\sum_{\substack{s-t\leq N\\s\geq N+1\\1\leq t\leq N}}
\left|\hat{f}\left(\frac{s}{L_N}\right)\right|\cdot\left|\hat{f}\left(\frac{t}{L_N}\right)\right|\to 0;
                        \end{align*}    
                    \item[(iii)]
                        \begin{align*}
                            \frac{N}{L_N^3}\sum_{\substack{|s-t|\leq N-1\\s,t\geq N}}\left|\hat{f}\left(\frac{s}{L_N}\right)\right|
\cdot\left|\hat{f}\left(\frac{t}{L_N}\right)\right|\to 0.
                        \end{align*}
                \end{enumerate}
            \end{lemma}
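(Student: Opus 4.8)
The plan is to exploit the one hypothesis that genuinely distinguishes this lemma from Lemma 4.4: there we assumed only $f'\in L^2(\mathbb{T})$, whereas here $f\in C_c^\infty(\R)$ forces its Fourier transform $\hat f$ to lie in the Schwartz class. Thus for every $M\geq 0$ there is a constant $C_M$ with $|\hat f(x)|\leq C_M(1+|x|)^{-M}$, and the same type of bound holds for $x\mapsto x\,\hat f(x)$. This rapid decay is the entire engine of the proof and renders the operator-norm and weak-convergence arguments of Lemma 4.4 unnecessary. The second structural observation is that in each of the three sums the summation constraints force at least one of the scaled arguments $s/L_N$, $t/L_N$ to exceed a threshold of order $N/L_N$, which tends to infinity precisely because $L_N\ll N$.

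The basic tool I would set up first is comparison with integrals. A single factor $1/L_N$ turns $\frac{1}{L_N}\sum_s$ into a Riemann sum, so that $\frac{1}{L_N}\sum_{s\geq 1}|\hat f(s/L_N)|$ and $\frac{1}{L_N}\sum_{s\geq 1}(s/L_N)|\hat f(s/L_N)|$ are bounded uniformly in $N$ (by $\int_0^\infty|\hat f|$ and $\int_0^\infty u|\hat f(u)|\,du$), while a tail sum is controlled by a tail integral: using $|\hat f(u)|\leq C_M u^{-M}$ one gets $\frac{1}{L_N}\sum_{s\geq R L_N}|\hat f(s/L_N)|\leq C_M' R^{-(M-1)}$, which decays faster than any power of $R^{-1}$.

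For (i), since $s+t\geq N+1$ implies $\max(s,t)\geq (N+1)/2$, I would split the sum into the regions $s\geq (N+1)/2$ and $t\geq (N+1)/2$ and factor each piece as (bounded Riemann sum)$\times$(tail Riemann sum); the tail is $\int_{N/(2L_N)}^\infty$, which vanishes as $N/L_N\to\infty$, and here the prefactor $(1/L_N)^2$ exactly matches the two-dimensional sum so that no divergent factor survives. For (ii) the constraint $s\geq N+1$ gives $s/L_N\geq N/L_N$; writing $\frac{N+1}{L_N^3}=\frac{N+1}{L_N}\cdot\frac{1}{L_N^2}$ and factoring, the bound becomes $\frac{N}{L_N}\cdot\big(\int_{N/L_N}^\infty|\hat f(u)|\,du\big)\cdot(\text{bounded})=O\big((N/L_N)^{2-M}\big)$, which tends to zero for $M$ large. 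Part (iii) is identical in spirit with both indices $\geq N$: the same bookkeeping yields $\frac{N}{L_N}\big(\int_{N/L_N}^\infty|\hat f(u)|\,du\big)^2=O\big((N/L_N)^{3-2M}\big)\to 0$.

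The only point requiring genuine care — and the closest thing to an obstacle — is the bookkeeping that the super-polynomial decay of the tail integrals $\int_{N/L_N}^\infty$ really does beat the polynomial prefactors $N/L_N$ produced by the $1/L_N^3$ normalizations in (ii) and (iii). Once one fixes $M$ large enough this is immediate, and it is exactly at this step that the hypothesis $L_N\ll N$, rather than merely $L_N\to\infty$, enters.
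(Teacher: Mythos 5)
Your proof is correct, but it takes a genuinely different route from the paper's. The paper proves Lemma 10.2 by transplanting the operator-theoretic machinery of Lemma 4.4 into the scaled setting: it keeps the decomposition $\langle X_N,A_NX_N\rangle$ with $\|A_N\|_{op}\le 3$, splits off a finite-rank piece truncated at $L=kL_N$, controls the remainder by Cauchy--Schwarz against a tail Riemann sum of $\int_k^\infty (x\hat f(x))^2\,dx$ (small for large $k$), and only then invokes decay of $\hat f$ (just $|\hat f(x)|\le C'/x^2$) to show the finite-rank piece is $O(kL_N/N)$; the conclusion follows from a two-parameter limit, first in $k$, then in $N$. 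You instead discard the operator framework entirely and rely on the Schwartz decay $|\hat f(x)|\le C_M(1+|x|)^{-M}$: in each sum the constraints force one scaled index past $cN/L_N\to\infty$, so after dropping the coupling constraint each double sum factors into a bounded one-dimensional Riemann sum times a tail Riemann sum of order $(N/L_N)^{1-M}$, which beats the polynomial prefactors $N/L_N$ in (ii) and (iii) once $M$ is large. Your argument is shorter, needs only a single limit in $N$, and makes the role of the hypothesis $L_N\ll N$ completely explicit. The trade-off is that you use more smoothness: for (ii) your bookkeeping requires decay strictly faster than $x^{-2}$ (with $M=2$ the bound is only $O(1)$), whereas the paper's Remark 10.3 points out that $f\in C_c^2(\mathbb{R})$ already suffices for its argument. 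Under the stated hypothesis $f\in C_c^\infty(\mathbb{R})$ this costs nothing, and your proof is complete as written.
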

        \begin{proof}
            To see (i), we replace the Fourier coefficients in the the proof of Lemma 4.4(i) with  the corresponding coefficients for the scaled case to get
                \begin{align}
                    \left(\frac{1}{L_N}\right)^2&\sum_{\substack{1\leq s,t\leq N\\ s+t\geq N+1}}\left(\frac{s}{L_N}\right)
\left|\hat{f}\left(\frac{s}{L_N}\right)\right|\cdot\left|\hat{f}\left(\frac{t}{L_N}\right)\right|\nonumber\\
                    %&= \left\langle X_N-\sum_{s=1}^{kL_N} \langle e_s, X_N\rangle e_s ,\hspace{3mm}A_NX_N \right\rangle + 
                    %\left\langle\sum_{s=1}^{kL_N} \langle e_s, X_N\rangle e_s ,\hspace{3mm}A_NX_N\right\rangle\nonumber\\
                \label{10.1}
                    &\leq 3\left[\frac{1}{L_N}\sum_{s=kL_N+1}^\infty 
\left(\frac{s}{L_N}\right)^2\left|\hat{f}\left(\frac{s}{L_N}\right)\right|^2 \right]^{1/2}\left[\frac{1}{L_N}
\sum_{s=1}^\infty \left(\frac{s}{L_N}\right)^2\left|\hat{f}\left(\frac{s}{L_N}\right)\right|^2 \right]^{1/2}\\
                    &\hspace{15mm}+  \frac{1}{L_N}\sum_{s=1}^{kL_N} \left|\hat{f}\left(\frac{s}{L_N}\right)\right|\cdot\left|
\frac{x_{N-s+1}+\dots +x_{N}}{L_N}\right|,\nonumber
                \end{align}
            where $k\in \mathbb{N}$. The first term in (\ref{10.1}) contains, in brackets, two Riemann Sums and consequently converges to
                \[
                    3\left(\int_k^\infty [x\hat{f}(x)]^2dx\right)^{1/2} \left(\int_0^\infty [x\hat{f}(x)]^2dx\right)^{1/2}=o_k(1).
                \]
            Since $f\in C_c^\infty(\mathbb{R})$, we can write $|\hat{f}(x)|\leq C'/x^2$ for some positive constant $C'$ depending only on $f$, i.e. 
independent of $k$ and $N$. It follows that
                \[
                    \frac{1}{L_N}\sum_{s=1}^{kL_N} \left|\hat{f}\left(\frac{s}{L_N}\right)\right|\cdot\left|\frac{x_{N-s+1}+\dots +x_{N}}{L_N}\right|
                    \leq\frac{C'}{L_N}\sum_{s=1}^{kL_N} \left|\hat{f}\left(\frac{s}{L_N}\right)\right|\cdot \left(\frac{1}{N-s+1}+\cdots+ \frac{1}{N}\right)
                \]
                \[
                    \leq \left(C'\frac{kL_N}{N-kL_N}\right)\left(\frac{1}{L_N}\sum_{s=1}^{kL_N} \left|\hat{f}\left(\frac{s}{L_N}\right)\right|\right).
                \]
            For any fixed $k$, the term on the left is $O\left(\frac{L_N}{N}\right)$ while the term on the right is a Riemann Sum converging to 
                \[
                    \int_0^k |\hat{f}(x)|\hspace{2mm}dx \leq ||\hat{f}||_1 <\infty
                \]
             as $N\to \infty$. It follows immediately that, for any $\epsilon>0$, we can choose $k$ and $N$ large enough so that both terms 
in (\ref{10.1}) are at most $\epsilon/2$. This gives the desired result.\\
            
            \begin{remark}
                In the above proof, we did not fully utilize the smoothness constraint on $f$. In fact, it would have been sufficient to 
have $f\in C^2_c(\mathbb{R})$.
            \end{remark}
            
            To see (ii), we observe that, in the same way as in the proof of (i), the proof of Lemma 4.4(ii) immediately implies 
                \begin{align}
                    \frac{N+1}{L_N^3}&\sum_{\substack{s-t\leq N\\s\geq N+1\\1\leq t\leq N}}\left|\hat{f}\left(\frac{s}{L_N}\right)\right|\cdot\left|
\hat{f}\left(\frac{t}{L_N}\right)\right|\nonumber\\
                \label{10.2}
                    &\leq 3\left(\frac{1}{L_N}\sum_{s=kL_N+1}^\infty \left(\frac{s}{L_N}\right)^2\left(\hat{f}
\left(\frac{s}{L_N}\right)\right)^2 \right)^{1/2}\left(\frac{1}{L_N}\sum_{s=1}^\infty \left(\frac{s}{L_N}\right)^2
\left(\hat{f}\left(\frac{s}{L_N}\right)\right)^2 \right)^{1/2}\\
                    &\hspace{15mm}+ \frac{1}{L_N}\left|\sum_{s=1}^{kL_N}\hat{f}\left(\frac{s}{L_N}\right)\frac{x_{N+1}+\dots + 
x_{N+s}}{L_N}\right|\nonumber
                \end{align}
            The first term in (\ref{10.2}) is the same as the first term in (\ref{10.1}). Similarly, we observe that the second term is bounded above by 
                \[
                    \left(C'\frac{L_N}{N}\right)\left(\frac{1}{L_N}\sum_{s=1}^{kL_N}
\left|\hat{f}\left(\frac{s}{L_N}\right)\right|\right)=O\left(\frac{L_N}{N}\right)\to 0.
                \]
            This completes the proof of (ii).\\
        
            To see (iii), we once again follow the same argument as in the proof of Lemma 4.4(iii). In particular, we split up the sum into two parts:
                \[
                 \left[\frac{N}{L_N^3}\sum_{\substack{t-N+1
                 \leq s\leq N+t-1\\t\geq N}}\left|\hat{f}\left(\frac{s}{L_N}\right)\right|\cdot\left|\hat{f}\left(\frac{t}{L_N}\right)\right|\right] - 
\left[\frac{N}{L_N^3}\sum_{\substack{t-N+1\leq s \leq N-1\\t\geq N}}\left|\hat{f}
\left(\frac{s}{L_N}\right)\right|\cdot\left|\hat{f}\left(\frac{t}{L_N}\right)\right|\right].
                \]
            The first sum goes to zero by (ii), while the proof of Lemma 4.4(iii) implies that the second sum is bounded above by 
                \[
                    3\left(\frac{1}{L_N}\sum_{s=N}^\infty \left(\frac{s}{L_N}\hat{f}\left(\frac{s}{L_N}\right)\right)^2\right)\left(\frac{1}{L_N}
\sum_{s=1}^\infty \left(\frac{s}{L_N}\hat{f}\left(\frac{s}{L_N}\right)\right)^2\right).
                \]
            The term on the r.h.s. is a Riemann sum that converges to 
                \[
                    \int_0^\infty (x\hat{f}(x))^2dx <\infty
                \]
            as $N\to \infty$, while the term on the l.h.s. is, at most, on the order of
                \[
                    \int_k^\infty (x\hat{f}(x))^2dx
                \]
            for any $k\in \mathbb{N}$, i.e. goes to zero as $N\to\infty$. This completes the proof of Lemma 10.2.\\
        \end{proof}

%We can immediately modify the proof of Proposition 4.3 by replacing Proposition 4.1 with Lemma 10.1 and Lemma 4.4 with Lemma 10.2 to get the 
%desired analogous result for the mesoscopic/scaled case. 

 \newpage

\end{document}